\newtheorem{theorem}{Theorem}[section]
\newtheorem{corollary}[theorem]{Corollary}
\newtheorem{lemma}[theorem]{Lemma}
\newtheorem{proposition}[theorem]{Proposition}
\newtheorem{remark}[theorem]{Remark}
\begin{document}
\title[Ground states of bi-harmonic equations with critical exponential growth]{Ground states of bi-harmonic equations with critical exponential growth involving constant and trapping potentials}
\author{Lu Chen, Guozhen Lu and Maochun Zhu}
\address{School of Mathematics and Statistics, Beijing Institute of Technology, Beijing 100081, P. R. China}
\email{chenlu5818804@163.com}
\address{Department of Mathematics\\
University of Connecticut\\
Storrs, CT 06269, USA}
\email{guozhen.lu@uconn.edu}
\address{Faculty of Science\\
Jiangsu University\\
Zhenjiang, 212013, P. R. China\\}
\email{zhumaochun2006@126.com}

\thanks{The research of the second author was supported partly by the Simons Foundation. The research of the
third author was supported by Natural Science Foundation of China (11601190),
Natural Science Foundation of Jiangsu Province (BK20160483) and Jiangsu
University Foundation Grant (16JDG043). }

\begin{abstract}
In this paper, we first give a necessary and sufficient condition for the
boundedness and the compactness for a class of nonlinear functionals in
$H^{2}\left(  \mathbb{R}^{4}\right)$. Using this result and the principle of symmetric criticality, we can present a relationship between the existence of the nontrivial solutions to the semilinear bi-harmonic equation of the form
\[
(-\Delta)^{2}u+\gamma u=f(u)\ \text{in}\ \mathbb{R}^{4}
\]
and the range of $\gamma\in \mathbb{R}^{+}$, where $f\left(  s\right)$ is the general nonlinear term having the critical
exponential growth at infinity. \ \ \

Though the existence of the nontrivial solutions for the bi-harmonic equation with the critical exponential growth has been studied in the literature, it seems that nothing is known so far about the existence of the ground-state solutions for this class of equations involving the trapping  potential introduced by Rabinowitz in \cite{Rabinowitz}.   Since the trapping potential is not necessarily symmetric, classical radial method cannot be applied to solve this problem. In order to overcome this difficulty, we first establish the existence of the ground-state solutions for the equation
\begin{equation}\label{con}
(-\Delta)^{2}u+V(x)u=\lambda s\exp(2|s|^{2}))\ \text{in}\ \mathbb{R}^{4},
\end{equation}
when $V(x)$ is a positive constant using  the Fourier rearrangement and the Pohozaev identity. Then we will explore the relationship between the Nehari manifold and the corresponding limiting Nehari manifold to derive the existence of the ground state solutions for the equation \eqref{con} when $V(x)$ is the Rabinowitz type trapping  potential, namely it satisfies
$$
0<V_{0}=\underset{x\in\mathbb{R}^{4}}{\inf}V\left(  x\right)
<\underset{\left\vert x\right\vert \rightarrow\infty}{\lim}V\left(  x\right)
<+\infty.
$$
The same result and proof applies to the harmonic equation with the critical exponential growth involving the Rabinowitz type trapping potential in $\mathbb{R}^2$.
\end{abstract}

\maketitle {\small {\bf Keywords:} Rabinowitz potential, Ground state
solutions; Bi-harmonic equations;  Adams' inequalities. \\

{\bf 2010 MSC.} Primary 46E35; 35J91;
Secondary   26D10.}

\section{\bigskip Introduction}

Let $\Omega$ be an open domain in $\mathbb{R}^n$. We will consider the following nonlinear partial differential equation with
critical growth%

\begin{equation}
\left(  -\Delta\right)  ^{m}u=f\left(  u\right)  \text{ in }\Omega\subset%
\mathbb{R}
^{n}, \label{eq}%
\end{equation}
where $m=1$ or $2$. Equations (\ref{eq}) have been extensively studied by many
authors in bounded\ and unbounded domains.

In the case $n>2m$, the subcritical and critical growth means that the
nonlinearity cannot exceed the polynomial of\textit{\ }degree $\frac{n+2m}%
{n-2m}$ by the Sobolev embedding. While in the case $n=2m$, we say that
$f\left(  s\right)  $ has \textit{critical exponential growth} at infinity if
there exists $\alpha_{0}>0$ such that \
\begin{equation}
\underset{t\rightarrow\infty}{\lim}\frac{f\left(  t\right)  }{\exp\left(
\alpha t^{2}\right)  }=%
\genfrac{\{}{.}{0pt}{}{0\text{, \ \ for }\alpha>\alpha_{0}}{+\infty\text{, for
}\alpha<\alpha_{0}}
\label{exponential critical}%
\end{equation}

\bigskip The critical exponential growth in the case $m=1,n=2$ is given
by\ the Trudinger-Moser inequality (\cite{Mo}, \cite{Tru}):%
\begin{equation}
\underset{\left\Vert \nabla u\right\Vert _{L^{2}\left(  \Omega\right)  }\leq
1}{\underset{u\in H_{0}^{1}\left(  \Omega\right)  }{\sup}}\int_{\Omega
}e^{\alpha\left\vert u\right\vert ^{2}}dx<\infty\text{ iff }\alpha\leq4\pi,
\label{moser}%
\end{equation}
and in the case $m=2,n=4$\ is given by the Adams inequality \cite{A}:%
\[
\underset{\left\Vert \Delta u\right\Vert _{L^{2}\left(  \Omega\right)  }\leq
1}{\underset{u\in H_{0}^{2}\left(  \Omega\right)  }{\sup}}\int_{\Omega
}e^{\alpha\left\vert u\right\vert ^{2}}dx<\infty\text{ iff }\alpha\leq
32\pi^{2}.
\]

The study of equation (\ref{eq}) with the critical exponential growth on
bounded domain also involves a lack of compactness similar to the case $n>2m$
at certain levels that the Palais-Smale compactness condition fails due
to concentration phenomena. In order to overcome the possible failure of the Palais-Smale compactness condition, there is a common approach involved with the Trudinger-Moser and Adams type inequalities (see \cite{Brezis}, \cite{Ambrosetti}%
,\ \cite{de Figueiredo}, \cite{Adimurthi}, \cite{Lam} and references therein).
\vskip0.1cm

If $\Omega$ is the entire Euclidean space $\mathbb{R}^n$, the earlier study of the existence of solutions for equation (\ref{eq}) with the critical exponential growth can date back to the work of Atkinson and Peletier \cite{Atkinson,Atkinson1}. Indeed, the authors obtained the existence of ground state solutions for equation (\ref{eq}) by assuming that there exists some $y_{0}>0$ such that
$g\left(  t\right)  =\log f\left(  t\right)  $ satisfies $$g^{\prime}\left(
t\right)  >0,g^{\prime\prime}\left(  t\right)  \geq0,$$ for any $t\geq y_{0}$. This kind of growth condition allows us to take the nonlinearity $f\left(
t\right)  =\left(  t^{2}-t\right)  \exp\left(  t^{2}\right), $ which has critical exponential growth. In the literature, many authors have
 considered the existence of solutions for equations of the form
\begin{equation}
\left(  -\Delta\right)  ^{m}u+V\left(  x\right)  u=f\left(  u\right)  \text{
in }%
\mathbb{R}
^{n},\label{potential eq}%
\end{equation}
where $n=2m$, the nonlinearity $f\left(  s\right)  $ has critical exponential
growth, and the potential $V(x)$ is bounded away from zero. For the equation
(\ref{potential eq}), the loss of compactness may be produced not only by the
concentration phenomena but also by the vanishing phenomena! We will describe some of the relevant works below.

\vskip0.1cm

When $V\left(  x\right)  $\ is a\ \bigskip coercive potential, that is,
\[
V\left(  x\right)  \geq V_{0}>0,\text{and additionally either}\underset{x\rightarrow\infty
}{\lim}V(x)=+\infty\text{ or }\frac{1}{V}\in L^{1}\left(
\mathbb{R}
^{n}\right)  ,
\]
the existence and multiplicity results of equation (\ref{potential eq}) can be
found in the papers \cite{Yang},\cite{Lamlu},\cite{zhao} and the
references\ therein. Their proofs depend crucially on the compact embeddings
given by the coercive potential, and the vanishing phenomena can be ruled out.
\vskip0.1cm

When $V\left(  x\right)  $ is the constant potential, i.e. $V\left(  x\right)
=V_{0}>0$, the natural space for a variational treatment of (\ref{potential eq})
is $H^{m}\left(
\mathbb{R}
^{n}\right)  $. It is well known that the embedding $H^{m}\left(
\mathbb{R}
^{n}\right)  \hookrightarrow L^{2}\left(
\mathbb{R}
^{n}\right)  $\ is continuous but not compact, even in the radial case. In
the case $m=2$, the existence of nontrivial solutions for equation
(\ref{potential eq}) was obtained by Chen et al in \cite{chenluzhang} (see also
\cite{bao})\ (see \cite{Alves}  for $m=1$) under the assumptions that for any
$p>2$,
\begin{equation}
f\left(  s\right)  \geq\eta_{p}s^{p-1},\forall s\geq0, \label{polo}%
\end{equation}
where $\eta_{p}$ is some constant depending on $p$, and by Sani in \cite{sani}
(see \cite{Rufs} or \cite{Atkinson} for $m=1$) under the assumption \
\begin{equation}
\underset{s\rightarrow+\infty}{\lim}\frac{sf\left(  s\right)  }{\exp\left(
32\pi^{2}s^{2}\right)  }\geq\beta_{0}>0. \label{exp}%
\end{equation}

\bigskip In their proofs, the so-called Trudinger-Moser-Adams inequality in
the whole $%
\mathbb{R}
^{2}$ or $
\mathbb{R}^{4}$ plays a crucial role. \ Now, let's mention some of these\ inequalities.
In 2000, Adachi-Tanaka \cite{Adachi-Tanaka}  (see also do O \cite{Do}) obtained a sharp Trudinger-Moser inequality
on $%
\mathbb{R}
^{n}$:\textit{ }%
\begin{equation}
\underset{\int_{\mathbb{R}^{n}}\left\vert \nabla u\right\vert ^{n}dx\leq
1}{\underset{u\in{W^{1,n}(\mathbb{R}^{n})},}{\sup}}\int_{\mathbb{R}^{n}}%
\Phi_{n}(\alpha|u|^{\frac{n}{n-1}})dx\leq{C(\alpha,n)\Vert u\Vert_{n}^{n}%
},\text{ iff }0<\alpha<\alpha_{n}, \label{Adachi-Tanaka}%
\end{equation}
where\textit{ }$\Phi_{n}(t):=e^{t}-\sum_{i=0}^{n-2}\frac{t^{i}}{i!}$\textit{.
}Note that  the inequality \eqref{Adachi-Tanaka}  has
the subcritical form, that is $\alpha<\alpha_{n}$. Later, in \cite{ruf} and
\cite{liruf}, Li and Ruf showed that the best exponent $\alpha_{n}$ becomes
admissible if the Dirichlet norm $\int_{\mathbb{R}^{n}}\left\vert \nabla
u\right\vert ^{2}dx$ is replaced by Sobolev norm $\int_{\mathbb{R}^{n}}\left(
\left\vert u\right\vert ^{2}+\left\vert \nabla u\right\vert ^{2}\right)  dx$.
More precisely, they proved that%

\begin{equation}
\underset{\int_{\mathbb{R}^{n}}\left(  \left\vert u\right\vert ^{n}+\left\vert
\nabla u\right\vert ^{n}\right)  dx\leq1}{\underset{u\in W^{1,n}\left(
\mathbb{R}
^{n}\right)  }{\sup}}\int_{%
\mathbb{R}
^{2}}\Phi_{n}\left(  \alpha\left\vert u\right\vert ^{\frac{n}{n-1}}\right)
dx<+\infty,\text{ iff }\alpha\leq\alpha_{n}. \label{Li-ruf}%
\end{equation}
The proofs of both the critical and subcritical Trudinger-Moser inequalities
\eqref{Adachi-Tanaka} and \eqref{Li-ruf} rely on the P\'{o}lya-Szeg\"{o} inequality and the symmetrization argument.
Lam and Lu (\cite{LamLu-AIM}, \cite{LaLu4}) developed a symmetrization-free method to establish the critical Trudinger-Moser inequality
(see also Lam, Lu and Tang \cite{LamLuTang} for a proof of the subcritical Trudinger-Moser inequality) in settings such as the Heisenberg group  where the P\'{o}lya-Szeg\"{o} inequality fails. Such an argument also provides an alternative proof of both critical and subcritical Trudinger-Moser inequalities \eqref{Adachi-Tanaka} and \eqref{Li-ruf} in the Euclidean space. In fact, the equivalence and relationship between the supremums of critical and subcritical Trudinger-Moser inequalities have been established by Lam, Lu and Zhang \cite{lamluzhang1}.
\vskip0.1cm

In 1995, Ozawa~\cite{O} obtained  the Adams inequality in Sobolev
space $W^{m,\frac{n}{m}}(\mathbb{R}^{n})$ on the entire Euclidean space
$\mathbb{R}^{n}$ by using the restriction $\Vert\Delta^{\frac{m}{2}}%
u\Vert_{\frac{n}{m}}\leq{1}$. However, with the argument in \cite{O,KSW}, one
cannot obtain the best possible exponent $\beta$ for this type of
inequality. Sharp Adams inequality in the case of even order of derivatives was proved by Ruf and Sani ~\cite{RS}
under the constraint
$$\{
u\in W^{{m},\frac{{n}}{m}}| \Vert(I-\Delta)^{\frac{m}{2}}\Vert_{\frac{n}{m}%
}\leq1
\},$$
when $m$ is an even integer. When  the order $m$ of the derivatives is odd, a sharp Adams inequality was established
by
Lam and Lu \cite{LamLu-JDE2012}. A uniform proof was given for all orders $m$ of derivatives including fractional orders of derivatives by Lam and Lu in ~\cite{LaLu4} through a rearrangement-free argument.
\vskip0.1cm

  The authors in \cite{LaLu4} obtained
the sharp Adams inequality under the Sobolev norm\ constraint:%

\begin{equation}
\underset{\Vert\Delta u\Vert_{2}^{2}+\Vert u\Vert_{2}^{2}\leq{1}}%
{\underset{u\in H{^{2}(\mathbb{R}^{4})}}{\sup}}\int_{{\mathbb{R}^{4}}}\left(
\exp(\beta|u(x)|^{2})-1\right)  dx\left\{
\begin{array}
[c]{l}%
\leq C\text{ if }\beta\leq{32\pi}^{2}{,}\\
=+\infty\text{ \ if }\beta>{32\pi}^{2}{.}%
\end{array}
\right.  \label{Adams entire space}%
\end{equation}

\bigskip In 2011, Ibrahim et al \cite{IMN} discovered a sharpened
Trudinger-Moser inequality on $%
\mathbb{R}
^{2}$--the Trudinger-Moser inequality\ with the exact growth condition:
\begin{equation}
\underset{\int_{\mathbb{R}^{4}}|\nabla u|^{2}dx\leq1}{\underset{u\in
H^{1}\left(
\mathbb{R}
^{2}\right)  }{\sup}}\int_{\mathbb{R}^{2}}\frac{\exp(4\pi|u|^{2}%
)-1}{(1+|u|)^{p}}dx\leq C_{p}\int_{\mathbb{R}^{2}}|u|^{2}dx\ \text{iff }%
p\geq2. \label{exact}%
\end{equation}
Later, (\ref{exact}) was extended to the general case $n\geq3$\ by Masmoudi
and Sani \cite{MS2} (see Lam et al \cite{Lamluzhang} for  inequalities with exact growth under different norms) and to the
framework of hyperbolic space by Lu and Tang in \cite{LuTa2}. It is
interesting to notice that the Trudinger-Moser inequality\ with the exact
growth condition can imply both the inequalities (\ref{Adachi-Tanaka})
and\ (\ref{Li-ruf}).
\vskip0.1cm

The Adams' inequality\ with the exact growth condition was obtained by
Masmoudi and Sani \cite{MS} in dimension $4$: \ \
\begin{equation}
\underset{\int_{\mathbb{R}^{4}}|\Delta v|^{2}dx\leq1}{\underset{u\in
H^{2}\left(
\mathbb{R}
^{4}\right)  }{\sup}}\int_{\mathbb{R}^{4}}\frac{\exp(32\pi^{2}|v|^{2}%
)-1}{(1+|v|)^{p}}dx\leq C_{p}\int_{\mathbb{R}^{4}}|v|^{2}dx\ \text{iff }%
p\geq2, \label{int1}%
\end{equation}
and then established in any dimension $n\geq3$ by Lu et al in \cite{LuTZ} (see
\cite{MS1} for higher order case). Further improvement of Adams inequalities can also be found in recent work 
of Lu and Yang \cite{LuYang} where sharpened Hardy-Adams inequalities were established in $\mathbb{R}^4$ using Fourier analysis on hyperbolic spaces (see also \cite{LiLuYang} for higher even dimensions).

\bigskip Based on the Trudinger--Moser inequality with the exact growth,
Ibrahim et al obtained a sufficient and necessary condition for compactness of
general nonlinear functionals (see \cite{MS2} for $n\geq3$). This sufficient
and necessary condition is a strong tool to study the
existence of solutions for the semilinear equation under a very general assumption on the nonlinearity.
Indeed, they consider the equations of the form:%
\begin{equation}
-\Delta u+\gamma u=f\left(  u\right)  \text{ in }%
\mathbb{R}
^{2}, \label{equa}%
\end{equation}
where $\gamma$ is a positive constant and $f\left(  s\right)  $ has the critical exponential growth at infinity.
They establish the following result.

\begin{proposition}
\label{sanire}If  $f$ satisfies $f(0)=0$ and the conditions
(\ref{exponential critical}), (i) and (ii)(see Section 2), then\ there exists
$\gamma^{\ast}\in\left(  0,+\infty\right)  $ such that for each $\gamma
\in\left(  0,\gamma^{\ast}\right)  $, the equation admits a positive ground
state solution.
\end{proposition}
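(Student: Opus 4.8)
The plan is to use a mountain-pass scheme combined with the compactness criterion of Ibrahim–Masmoudi–Nakanishi (the sufficient and necessary condition for compactness of the nonlinear functional built from the Trudinger–Moser inequality with exact growth \eqref{exact}). First I would set up the variational framework: working in $H^{1}(\mathbb{R}^{2})$ with the $\gamma$-dependent norm $\|u\|_{\gamma}^{2}=\int_{\mathbb{R}^{2}}(|\nabla u|^{2}+\gamma u^{2})\,dx$, I would define $F(s)=\int_{0}^{s}f(t)\,dt$ and the energy functional $I_{\gamma}(u)=\tfrac12\|u\|_{\gamma}^{2}-\int_{\mathbb{R}^{2}}F(u)\,dx$. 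Using the critical-growth hypothesis \eqref{exponential critical}, the assumptions (i) and (ii) (the Ambrosetti–Rabinowitz-type superquadraticity and the monotonicity/behavior near zero), together with \eqref{exact}, I would verify $I_{\gamma}\in C^{1}$, that $0$ is a strict local minimum (mountain-pass geometry) and that $I_{\gamma}$ has no other restrictions to crossing, so the mountain-pass value $c_{\gamma}=\inf_{\text{paths}}\max I_{\gamma}>0$ is well defined.

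Next I would produce a Palais–Smale (indeed Cerami) sequence $(u_{k})$ at level $c_{\gamma}$ via Ekeland's variational principle applied along a minimizing path, and extract the standard consequences: $(u_{k})$ is bounded in $H^{1}$ (boundedness comes from the Ambrosetti–Rabinowitz condition applied to $I_{\gamma}(u_{k})-\tfrac1\theta\langle I_{\gamma}'(u_{k}),u_{k}\rangle$), and up to a subsequence $u_{k}\rightharpoonup u$ with $u$ a weak (hence classical, by elliptic regularity) solution of \eqref{equa}. The heart of the matter is to show $u\not\equiv 0$ and that in fact $u_{k}\to u$ strongly, i.e. $I_{\gamma}(u)=c_{\gamma}$. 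This is exactly where the smallness of $\gamma$ enters: one needs the mountain-pass level to lie strictly below the threshold at which the compactness criterion applies. Concretely, I would estimate $c_{\gamma}$ from above by evaluating $I_{\gamma}$ on a Moser-type test function rescaled appropriately, showing $c_{\gamma}\to 0$ as $\gamma\to 0^{+}$ (the quadratic part scales down with $\gamma$ while the nonlinear part is essentially $\gamma$-independent after optimizing the path parameter). Hence there is $\gamma^{\ast}>0$ so that for $\gamma\in(0,\gamma^{\ast})$ the level $c_{\gamma}$ is below the compactness threshold dictated by Proposition-level statement behind \eqref{exact}, and the Ibrahim–Masmoudi–Nakanishi criterion then guarantees that the bounded PS sequence, after possibly a translation to prevent vanishing, converges strongly to a nontrivial solution. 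Finally, a positivity argument (replace $u$ by $|u|$, use that $f(s)=0$ for $s\le 0$ or work with the positive part, then apply the strong maximum principle) upgrades the solution to a positive one, and a standard comparison of its energy with the mountain-pass characterization identifies it as a ground state on the Nehari manifold.

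The main obstacle I anticipate is the compactness/concentration analysis of the critical nonlinear term: because $f$ has critical exponential growth, the Trudinger–Moser functional $\int F(u_{k})$ need not be sequentially continuous along weakly convergent sequences at arbitrary energy levels — concentration of $|\nabla u_{k}|^{2}$ at a point can make the exponential integral jump. The device to defeat this is precisely the exact-growth inequality \eqref{exact} combined with the energy threshold: keeping $c_{\gamma}$ small forces the "concentration mass" below the critical amount $4\pi$, so no bubbling occurs and $\int F(u_{k})\to\int F(u)$. A secondary subtlety is ruling out vanishing of the PS sequence (the $u_{k}$ spreading to infinity), which one handles with a Lions-type concentration-compactness dichotomy: if $\sup_{y}\int_{B_{1}(y)}u_{k}^{2}\to 0$ then by the exact-growth inequality $\int F(u_{k})\to 0$, forcing $c_{\gamma}=0$, a contradiction; so after translating, the weak limit is nontrivial. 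Once strong convergence is in hand, everything else is routine: regularity, positivity, and the ground-state identification.
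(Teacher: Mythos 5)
Your proposal proceeds by mountain-pass theory plus a Lions concentration-compactness dichotomy, whereas the cited Ibrahim--Masmoudi--Nakanishi result (and the paper's own biharmonic analogue, Theorem~\ref{thm3}) is proved by constrained minimization on the Pohozaev manifold restricted to \emph{radial} functions: one symmetrizes, sets $G_{\gamma}(u)=\gamma\|u\|_{2}^{2}-2\int F(u)\,dx$, minimizes $\tfrac12\|\nabla u\|_{2}^{2}$ over $\{G_{\gamma}=0\}\cap H_{r}^{1}$, shows this infimum $A_{\gamma}$ is attained whenever it lies strictly below the Trudinger--Moser threshold $R(F)/2$, and characterizes that smallness condition as $\gamma<C^{*}$, the Trudinger--Moser ratio. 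That route pins down $\gamma^{*}$ as $C^{*}$ and eliminates translation invariance from the outset by staying in the radial class.

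Two steps in your argument do not hold as written. First, the compactness criterion you invoke --- the two-dimensional analogue of Theorem~\ref{thm2} --- is stated and proved for \emph{radially symmetric} weakly convergent sequences; its proof relies on the radial decay estimate $|u_{k}(r)|^{2}\lesssim r^{-1}\|u_{k}\|_{H^{1}}^{2}$, which has no analogue without symmetry. After you translate the PS sequence by $y_{k}$ to prevent vanishing, the translated sequence is no longer radial, so the criterion does not apply and the strong convergence you claim is not justified by it. Second, your rule-out of vanishing is not supported by the tool you cite: the exact-growth inequality~\eqref{exact} gives $\int F(u_{k})\,dx\lesssim\|u_{k}\|_{2}^{2}$ under the gradient constraint, but $\|u_{k}\|_{2}$ stays bounded away from zero when $\sup_{y}\int_{B_{1}(y)}u_{k}^{2}\,dx\to 0$, so the exact-growth inequality alone does not force $\int F(u_{k})\,dx\to 0$. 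Vanishing can still be defeated, but the correct mechanism splits the integral, uses Lions' $L^{p}$-vanishing together with $F(t)=o(t^{2})$ near the origin for the bounded range, and uses condition (ii) plus the a priori bound on $\int u_{k}f(u_{k})\,dx$ from the PS identity to control the tail --- not the exact-growth inequality as such. A milder imprecision: your heuristic for $c_{\gamma}\to0$ (``the quadratic part scales down with $\gamma$'') overlooks that $\|\nabla u\|_{2}^{2}$ is $\gamma$-independent; what drives $c_{\gamma}\to 0$ is the two-dimensional scaling $u\mapsto u(\cdot/\lambda)$, which leaves the Dirichlet energy fixed while scaling the $L^{2}$ and $F$ terms identically --- precisely the Pohozaev rescaling at the heart of the paper's proof.
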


The number $\gamma^{\ast}$ above is associated with the so called
Trudinger--Moser ratio, and both the growth conditions (\ref{polo}) and
(\ref{exp}) imply that the constant $\gamma$ appearing in (\ref{equa})
satisfies
\[
\gamma<\gamma^{\ast}.
\]
Their arguments depend crucially on the Pohozaev identity\ and Schwarz
symmetrization argument.

\bigskip Motivated by the results just described, in this paper, we are
interested in\ the existence of ground state solutions for the biharmonic equations%

\begin{equation}
(-\Delta)^{2}u+V(x)u=f(u)\ \text{in}\ \mathbb{R}^{4},\label{b1-2}%
\end{equation}
where the nonlinear term $f\left(  s\right)  $ has the critical exponential
growth (\ref{exponential critical}) at infinity and the potential $V(x)$ is
bounded away from zero. \
\vskip0.1cm

As far as we know, there are no related results about the existence of
\textit{ground state solutions} for the problem (\ref{b1-2}) by means of
variational methods. In study of the problems involving the bi-harmonic
operator, we will encounter many more difficulties than in the case for the Lapacian. For example, we cannot always rely on the maximum
principle, and there is no P\'{o}lya-Szeg\"{o} type inequality for the second order derivatives. Thus, we
cannot use Schwarz symmetrization principle. Moreover, if $u$
belongs $H^{2}\left(
\mathbb{R}
^{4}\right)  $, we cannot claim that \textit{\ }$\left\vert u\right\vert
$,$\ u^{+}$ or $u^{-}$ belong to $H^{2}\left(
\mathbb{R}
^{4}\right)  $. Therefore, we cannot expect to obtain a positive
solution.\textit{\ }

\section{The main results}

In order to obtain the existence of solutions to the equation (\ref{b1-2}),\ we first
establish the necessary and sufficient conditions for the boundedness and the
compactness of general nonlinear functionals in $H^{2}(\mathbb{R}^{4})$.

\begin{theorem}
[Boundedness]\label{thm1} Suppose that $g:\mathbb{R}\rightarrow\lbrack
0,+\infty)$ is a Borel function and define
\[
G(u)=\int_{\mathbb{R}^{4}}g(u)dx.
\]
Then for any $K>0$, the following conditions are equivalent

(1)$\ \lim\limits_{t\rightarrow+\infty}|t|^{2}\exp(-\frac{1}{K}|t|^{2}%
)g(t)<\infty,\lim\limits_{t\rightarrow0}|t|^{-2}g(t)<\infty.$

(2) There exists a constant $C_{g,K}>0$ such that for any $u\in H^{2}%
(\mathbb{R}^{4})$ satisfying $\int_{\mathbb{R}^{4}}|\Delta u|^{2}dx\leq
32\pi^{2}K$, there holds
\[
\int_{\mathbb{R}^{4}}g(u)dx\leq C_{g,K}\int_{\mathbb{R}^{4}}|u|^{2}dx.
\]

\end{theorem}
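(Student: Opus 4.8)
The strategy is to prove the two implications (2)$\Rightarrow$(1) and (1)$\Rightarrow$(2) separately, with the Adams inequality with exact growth \eqref{int1} (suitably rescaled to the norm constraint $\int_{\mathbb{R}^4}|\Delta u|^2\,dx\le 32\pi^2 K$) as the main analytic engine for the harder direction.

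For (2)$\Rightarrow$(1), I would argue by contradiction using explicit test functions. The behavior of $g$ near infinity is governed by the sharpness of Adams' inequality: if $\limsup_{t\to\infty}|t|^2\exp(-\tfrac1K|t|^2)g(t)=\infty$, one picks a sequence $t_k\to\infty$ along which $g(t_k)\ge k\,t_k^{-2}\exp(\tfrac1K t_k^2)$, and inserts a suitably scaled Adams-type concentrating profile (a truncated logarithm, or the Moser-type sequence adapted to $H^2(\mathbb{R}^4)$) whose $L^\infty$ value is $t_k$, whose $\|\Delta u\|_2^2$ is just below $32\pi^2 K$, and which equals $\approx t_k$ on a ball of radius comparable to the concentration scale. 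Comparing $\int g(u)$ (which blows up because of the extra factor $k$) against $\int |u|^2$ contradicts (2). For the near-zero behavior, if $\limsup_{t\to0}|t|^{-2}g(t)=\infty$, I would instead take a fixed smooth bump scaled to have small amplitude: then $\int g(u)\gg \int|u|^2$ since $g$ dominates $t^2$ near $0$, again contradicting (2).

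For (1)$\Rightarrow$(2), condition (1) gives constants so that $g(t)\le C\,|t|^2$ for $|t|\le\delta$ and $g(t)\le C\,|t|^{-2}\exp(\tfrac1K|t|^2)$ for $|t|\ge M$, and $g$ is bounded on the compact range $[\delta,M]$ by, say, $g(t)\le C'|t|^2$ there as well (absorbing the bounded middle range into a polynomial bound since $t$ is bounded below by $\delta$). So globally $g(t)\le C|t|^2 + C\,|t|^{-2}\exp(\tfrac1K|t|^2)\chi_{\{|t|\ge M\}}$. Integrating: the first term contributes $C\int_{\mathbb{R}^4}|u|^2\,dx$ directly. For the second, on the set $\{|u|\ge M\}$ one writes $|u|^{-2}\exp(\tfrac1K|u|^2) \le M^{-4}\,|u|^2\cdot|u|^{-4}\exp(\tfrac1K|u|^2)$... more cleanly, since $\exp(\tfrac1K|u|^2)-1 \le \exp(\tfrac1K|u|^2)$ and $|u|^{-2}\le M^{-2}$ there, one bounds
\[
\int_{\{|u|\ge M\}} |u|^{-2}\exp\!\Big(\tfrac1K|u|^2\Big)dx
\le C\int_{\mathbb{R}^4}\frac{\exp(\tfrac1K|u|^2)-1}{(1+|u|)^2}\,dx,
\]
using $|u|^{-2}\sim (1+|u|)^{-2}$ for large $|u|$ and handling the $+1$ correction on $\{|u|\ge M\}$ by a further $\int|u|^2$ term. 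Now rescale: set $v=u/\sqrt{32\pi^2 K}$, so $\|\Delta v\|_2^2\le 1$, and $\tfrac1K|u|^2 = 32\pi^2|v|^2$; then \eqref{int1} (with $p=2$) gives $\int \frac{\exp(32\pi^2|v|^2)-1}{(1+|v|)^2}dx \le C\int|v|^2\,dx = C'\int|u|^2\,dx$. Reassembling all pieces yields $\int g(u)\,dx \le C_{g,K}\int|u|^2\,dx$.

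The main obstacle is the (2)$\Rightarrow$(1) direction at infinity: one must construct the concentrating test functions in $H^2(\mathbb{R}^4)$ carefully so that simultaneously (i) $\|\Delta u_k\|_2^2 \le 32\pi^2 K$, (ii) $u_k \approx t_k$ on a ball large enough that $\int g(u_k)$ captures the divergent factor, and (iii) $\int|u_k|^2$ is controlled precisely — this requires the fine asymptotics of the Adams extremal-type sequence (the analogue for $\Delta$ on $\mathbb{R}^4$ of the Moser functions), including the correct logarithmic truncation and the lower-order $L^2$ estimate. I expect to borrow this construction essentially verbatim from the exact-growth Adams literature (Masmoudi–Sani \cite{MS}), adapting the scaling to the parameter $K$.
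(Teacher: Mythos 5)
Your proposal follows essentially the same strategy as the paper: for (2)$\Rightarrow$(1) a contradiction argument built from test functions — Moser/Adams-type concentrating profiles in $H^2(\mathbb{R}^4)$ for the behavior at infinity, and low-amplitude plateau bumps for the behavior near zero — and for (1)$\Rightarrow$(2) a reduction via rescaling to the Adams inequality with exact growth~\eqref{int1}. The only cosmetic difference is near zero, where the paper spreads the plateau over a radius $R_k\to\infty$ (this extra freedom is used later for the compactness statement in Theorem~\ref{thm2}), whereas your fixed-support bump with amplitude $a_k\to 0$ already suffices for the boundedness statement here.
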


\begin{theorem}
[Compactness]\label{thm2} Suppose that $g:\mathbb{R}\rightarrow\lbrack
0,+\infty)$ is a continuous function and define
\[
G(u)=\int_{\mathbb{R}^{4}}g(u)dx.
\]
Then for any $K>0$, the following conditions are equivalent

(3)$\ \lim\limits_{t\rightarrow+\infty}|t|^{2}\exp(-\frac{1}{K}|t|^{2}%
)g(t)=0,\ \lim\limits_{t\rightarrow0}|t|^{-2}g(t)=0.$

(4) For any radially symmetric\ sequence $\{u_{k}\}_{k}\in H^{2}%
(\mathbb{R}^{4})$ satisfying $\int_{\mathbb{R}^{4}}|\Delta u|^{2}dx\leq
32\pi^{2}K$ and weakly converging to some $u$, we have that $G(u_{k}%
)\rightarrow G(u)$.
\end{theorem}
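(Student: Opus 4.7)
The plan is to prove the harder direction $(3)\Rightarrow(4)$ via Vitali's convergence theorem, splitting the integral into a radial tail at infinity and a uniformly integrable piece on each bounded ball. The reverse direction $(4)\Rightarrow(3)$ will be handled by contradiction using explicit Moser--Adams-type concentrating and dilating sequences.

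\emph{A.e.\ convergence and tail estimate.} Weak $H^{2}(\mathbb{R}^{4})$-convergence is in particular weak $L^{2}_{\mathrm{loc}}$-convergence, which together with $H^{2}$-boundedness gives, up to a subsequence, pointwise a.e.\ convergence $u_{k}\to u$, and hence $g(u_{k})\to g(u)$ a.e.\ by continuity of $g$. For radial $v\in H^{2}(\mathbb{R}^{4})$, the Strauss-type estimate $|v(x)|\leq C\|v\|_{H^{1}}|x|^{-3/2}$ for $|x|\geq 1$ applies (and $\|u_{k}\|_{H^{1}}$ is bounded by interpolation between bounded $\|u_{k}\|_{L^{2}}$ and $\|\Delta u_{k}\|_{L^{2}}$). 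Given $\varepsilon>0$, the first hypothesis in $(3)$ yields $\delta>0$ with $g(t)\leq\varepsilon t^{2}$ for $|t|\leq\delta$; choosing $R$ large enough that $|u_{k}(x)|,|u(x)|\leq\delta$ on $|x|\geq R$ uniformly in $k$, one gets
\[
\int_{|x|\geq R}\bigl(g(u_{k})+g(u)\bigr)\,dx\leq\varepsilon\int_{\mathbb{R}^{4}}\bigl(|u_{k}|^{2}+|u|^{2}\bigr)\,dx\leq C\varepsilon .
\]

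\emph{Vitali on $B_{R}$.} I would split $B_{R}$ according to $\{|u_{k}|\leq\delta\}$, $\{\delta<|u_{k}|<M\}$, and $\{|u_{k}|\geq M\}$. The first piece contributes at most $\varepsilon\|u_{k}\|_{L^{2}}^{2}\leq C\varepsilon$; the second is bounded by $C_{M}|E|$ on any $E\subseteq B_{R}$ since $g$ is bounded on $[\delta,M]$ by continuity. For the third, the second condition in $(3)$ gives $g(u_{k})\leq\varepsilon|u_{k}|^{-2}\exp(|u_{k}|^{2}/K)$, and after choosing $M$ so that $(1+|t|)^{2}\leq 2t^{2}$ for $|t|\geq M$ and rescaling $v=u_{k}/\sqrt{32\pi^{2}K}$ in the exact-growth Adams inequality~\eqref{int1},
\[
\int_{|u_{k}|\geq M}\frac{\exp(|u_{k}|^{2}/K)}{|u_{k}|^{2}}\,dx\leq C\int_{\mathbb{R}^{4}}|u_{k}|^{2}\,dx\leq C' .
\]
Combining the three pieces gives uniform absolute continuity of $\{g(u_{k})\}$ on $B_{R}$, so Vitali yields $\int_{B_{R}}g(u_{k})\to\int_{B_{R}}g(u)$; adding the tail estimate finishes $(3)\Rightarrow(4)$.

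\emph{Reverse direction and main obstacle.} For the converse, if $\limsup_{t\to\infty}t^{2}\exp(-t^{2}/K)g(t)>0$, I would feed the witnessing values into a radial Moser--Adams concentrating sequence of peak height $\approx t_{n}$ with $\|\Delta u_{n}\|_{2}^{2}\leq 32\pi^{2}K$ and $u_{n}\rightharpoonup 0$, so that $G(u_{n})\not\to 0$ contradicts $(4)$; a low-amplitude dilation $u_{n}(x)=\varphi(x/n)$ defeats the limit at $0$ analogously. The chief technical hurdle lies in the large-value piece of the Vitali splitting: it is exactly the exact-growth denominator $(1+|v|)^{2}$ in~\eqref{int1} that makes $\exp(|u_{k}|^{2}/K)/|u_{k}|^{2}$ uniformly integrable at the borderline $\|\Delta u_{k}\|_{2}^{2}=32\pi^{2}K$. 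A merely subcritical Adams bound would not suffice, because the critical threshold $32\pi^{2}$ would have to be crossed to extract room for summability.
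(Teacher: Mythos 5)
Your proof follows essentially the same approach as the paper: Strauss-type radial decay to control the tail at spatial infinity via the hypothesis at $t\to 0$, the exact-growth Adams inequality \eqref{int1} to control the large-value set uniformly via the hypothesis at $t\to\infty$, and dominated/Vitali convergence on the remaining bounded piece; the reverse direction via dilated Moser-type concentrating sequences (for the condition at infinity) and low-amplitude spreading sequences (for the condition at the origin) is also the paper's route. One small slip: in your tail estimate you invoke ``the first hypothesis in $(3)$'' to get $g(t)\le\varepsilon t^{2}$ for small $|t|$, but that bound comes from the \emph{second} hypothesis $\lim_{t\to 0}|t|^{-2}g(t)=0$; the first hypothesis is the one you correctly use later for the large-value piece.
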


As an application, we study the following bi-harmonic equation with the constant
potential,
\begin{equation}
(-\Delta)^{2}u+\gamma u=f(u)\ \text{in}~\mathbb{R}^{4},\ \label{bi-harmonic1}%
\end{equation}
where the nonlinearity $f(t)$ is a continuous function on $\mathbb{R}$
satisfying (\ref{exponential critical}), $f(0)=0$ and the following properties:

(i) (Ambrosetti-Rabinowitz condition) There exists $\mu>2$ such that $0<\mu F(t)=\mu\int_{0}^{t}f(s)ds\leq tf(t)$
for any $t\in\mathbb{R}^{+}$;\newline

(ii) There exist $t_{0}$ and $M_{0}>0$ such that $F(t)\leq M_{0}f(t)$ for any
$t\geq t_{0}$.\newline

\begin{theorem}
\label{thm3} Assume that $f$ satisfies $f(0)=0$ and the conditions
(\ref{exponential critical}), (i) and (ii), then there exists $\gamma^{\ast
}\in(0,+\infty]$ such that for any $\gamma\in(0,\gamma^{\ast})$, the equation
(\ref{bi-harmonic1}) admits a non-trivial radial solution. Moreover,
$\gamma^{\ast}$ is equal to the Admas ratio:
\[
C_{A}^{\ast}=\sup\left\{  \frac{2}{\Vert u\Vert_{2}^{2}}\int_{\mathbb{R}^{4}%
}F(u)dx|\ u\in H_{r}^{2}(\mathbb{R}^{4}),\Vert\Delta u\Vert_{2}^{2}\leq
\frac{32\pi^{2}}{\alpha_{0}}\right\}  ,
\]
where $H_{r}^{2}(\mathbb{R}^{4})$  is the collection of all radial functions
in $H^{2}(\mathbb{R}^{4})$. In particular, $\gamma^{\ast}=+\infty$ is equivalent to
\[
\lim_{t\rightarrow\infty}\frac{t^{2}F(t)}{\exp(\alpha_{0}t^{2})}=+\infty.
\]

\end{theorem}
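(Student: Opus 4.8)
The plan is to follow the strategy of Ibrahim–Masmoudi–Nakanishi (Proposition \ref{sanire}) as closely as possible, replacing the Trudinger–Moser machinery and Schwarz symmetrization by the Adams-type tools available in $H^2_r(\mathbb{R}^4)$, in particular Theorems \ref{thm1} and \ref{thm2}. First I would set up the variational framework: on $H^2_r(\mathbb{R}^4)$ equipped with the norm $\|u\|_\gamma^2=\|\Delta u\|_2^2+\gamma\|u\|_2^2$, the functional $J_\gamma(u)=\tfrac12\|u\|_\gamma^2-\int_{\mathbb{R}^4}F(u)\,dx$ is well-defined and $C^1$; condition \eqref{exponential critical} together with (i) guarantees that $F$ has critical exponential growth, so $J_\gamma$ is finite on $H^2_r$ by the Adams inequality \eqref{Adams entire space}. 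Using the Ambrosetti–Rabinowitz condition (i) one checks the mountain-pass geometry: $J_\gamma$ has a strict local minimum at $0$, and along any fixed direction $J_\gamma(tu)\to-\infty$. Let $m_\gamma$ be the mountain-pass level, equivalently the infimum of $J_\gamma$ over the Nehari manifold $\mathcal N_\gamma$. By scaling and a Pohozaev-type computation one identifies $m_\gamma$ with a minimization that, for the pure critical nonlinearity $f(s)=\lambda s\exp(2s^2)$, connects to the Adams ratio $C_A^*$; the general-$f$ case is handled by the two-sided bounds coming from (i) and (ii).

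Next I would establish the key threshold inequality: the mountain-pass level is finite, and more precisely $m_\gamma$ stays below the compactness threshold $\tfrac{16\pi^2}{\alpha_0}$ (the Adams constant $32\pi^2$ divided by $2\alpha_0$) exactly when $\gamma<\gamma^*$, where $\gamma^*:=C_A^*$. One direction: if $\gamma<C_A^*$ there is a radial $u$ with $\|\Delta u\|_2^2\le 32\pi^2/\alpha_0$ and $2\|u\|_2^{-2}\int F(u)>\gamma$; rescaling $u$ appropriately produces a path witnessing $m_\gamma$ strictly below the threshold. Conversely, a Pohozaev identity argument (the analogue of what IMN do in $\mathbb{R}^2$) shows that any nontrivial solution at or above the threshold forces $\gamma\ge C_A^*$. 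This is the step where the Pohozaev identity for $(-\Delta)^2$ in $\mathbb{R}^4$ enters; since there is no boundary, the identity reads cleanly in terms of $\int|\Delta u|^2$, $\int\gamma u^2$ and $\int F(u)$, and the dimensional coefficients are what single out the value $C_A^*$.

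Then comes the compactness argument for a Palais–Smale (or Cerami) sequence $\{u_k\}\subset H^2_r$ at level $m_\gamma<\tfrac{16\pi^2}{\alpha_0}$. Condition (i) gives the standard boundedness of $\|u_k\|_\gamma$, so up to a subsequence $u_k\rightharpoonup u$ weakly in $H^2_r$. Here the radial restriction is essential and is exploited through Theorem \ref{thm2}: since $m_\gamma$ lies strictly below the threshold, one can arrange $\limsup\|\Delta u_k\|_2^2\le 32\pi^2 K$ for some $K<1/\alpha_0$, and then the continuous function $g(t)=F(t)$ (which satisfies the compactness condition (3) by the strict subcriticality obtained from the level estimate, using (ii) to control $F$ by $f$ and hence by the "one power lower" growth) yields $\int F(u_k)\to\int F(u)$, and similarly for $\int f(u_k)u_k$ via the companion statement. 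This passage to the limit, combined with the weak equation, shows $u$ is a nontrivial critical point and $J_\gamma(u)=m_\gamma$, i.e. a ground state.

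Finally I would address the two endpoint assertions: that $\gamma^*>0$ always (immediate, since $C_A^*>0$ because $F\ge 0$ is not identically zero and one may test with any fixed radial bump), and the characterization $\gamma^*=+\infty \iff \lim_{t\to\infty} t^2 F(t)\exp(-\alpha_0 t^2)=+\infty$. For the latter, one evaluates the Adams ratio $C_A^*$ along a sequence of Adams–Moser concentrating functions $w_k$ (the standard radial truncated-logarithm test functions normalized so $\|\Delta w_k\|_2^2=32\pi^2/\alpha_0$), for which $w_k(0)\to\infty$ while $\|w_k\|_2^2\to 0$ at a known logarithmic rate; if $t^2F(t)e^{-\alpha_0 t^2}\to\infty$ then $\int F(w_k)/\|w_k\|_2^2\to\infty$, giving $C_A^*=+\infty$, and conversely the exact-growth Adams inequality \eqref{int1} (applied with $p=2$) bounds $\int F(u)$ by $C\|u\|_2^2$ whenever $t^2F(t)e^{-\alpha_0 t^2}$ stays bounded, forcing $C_A^*<+\infty$. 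The main obstacle I anticipate is the second paragraph's threshold computation: getting the constant in the Pohozaev identity to match $C_A^*$ exactly, and ensuring the level estimate $m_\gamma<16\pi^2/\alpha_0$ is both necessary and sufficient for compactness — this is delicate because, unlike in the Laplacian case, one cannot symmetrize, so the strict level gap must be produced purely by a careful choice of test path built from the Adams extremal-type functions rather than from Schwarz rearrangements.
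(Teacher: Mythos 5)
Your proposal takes a genuinely different route from the paper, and that difference is where the gap lies. The paper does \emph{not} use a mountain-pass/Nehari/Palais--Smale argument. Instead it directly minimizes the Dirichlet energy $\tfrac12\|\Delta u\|_2^2$ over the Pohozaev manifold
\[
\mathcal P_r=\left\{u\in H^2_r(\mathbb{R}^4):\ G_\gamma(u)=\gamma\|u\|_2^2-2\int_{\mathbb{R}^4}F(u)\,dx=0\right\},
\]
noting (the $n=4$ Pohozaev identity for $(-\Delta)^2$) that $I_\gamma\equiv\tfrac12\|\Delta u\|_2^2$ on $\mathcal P_r$. A minimizing sequence for $A_\gamma=\inf_{\mathcal P_r}\tfrac12\|\Delta u\|_2^2$ then automatically satisfies $\|\Delta u_k\|_2^2\to 2A_\gamma$, so the hypothesis $A_\gamma<R(F)/2=16\pi^2/\alpha_0$ gives the subcritical bound $\|\Delta u_k\|_2^2\le 32\pi^2 K$ with $K<1/\alpha_0$ \emph{for free}; Theorem~\ref{thm2} then yields $\int F(u_k)\to\int F(u)$, and a one-dimensional scaling argument gives $G_\gamma(u)=0$, $\tfrac12\|\Delta u\|_2^2=A_\gamma$. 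The minimizer solves the equation only after the Lagrange-multiplier/dilation step \eqref{rescal}. Finally Lemma~\ref{lemm5} shows $A_\gamma<R(F)/2\iff\gamma<C_A^\ast$ by a direct monotonicity-of-$s\mapsto s^{-2}\int F(su)$ argument, not by any Pohozaev-nonexistence statement.

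The concrete gap in your route is the step where you ``arrange $\limsup\|\Delta u_k\|_2^2\le 32\pi^2K$ for some $K<1/\alpha_0$.'' For a generic Palais--Smale sequence at level $m_\gamma<16\pi^2/\alpha_0$, the Ambrosetti--Rabinowitz estimate only gives $\|\Delta u_k\|_2^2\le\|u_k\|_\gamma^2\le\tfrac{2\mu}{\mu-2}m_\gamma+o(1)$, and since $\tfrac{2\mu}{\mu-2}>2$, this does not imply $\|\Delta u_k\|_2^2<32\pi^2/\alpha_0$. The same difficulty occurs for a Nehari-manifold minimizing sequence. You would need either a Jeanjean-type monotonicity trick producing a Pohozaev--Palais--Smale sequence, or to switch to the Pohozaev-constrained minimization as the paper does; as written the compactness step does not close. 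In addition, two of your side remarks overreach: the theorem asserts a nontrivial radial solution, not a ground state (the paper explicitly notes that whether the Theorem~\ref{thm3} solution is a ground state is unknown), and the claimed converse ``any nontrivial solution at or above the threshold forces $\gamma\ge C_A^\ast$'' is neither proved in the paper nor needed; what is needed, and what Lemma~\ref{lemm5} proves, is the equivalence $A_\gamma<R(F)/2\iff\gamma<C_A^\ast$, which is an elementary comparison on the constraint manifold, not a nonexistence result.

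Your remaining ingredients (Theorems~\ref{thm1}--\ref{thm2} as the compactness engine, the threshold $R(F)/2=16\pi^2/\alpha_0$, the test-function/exact-growth dichotomy for the endpoint characterization of $\gamma^\ast=+\infty$) are correct and match the paper's. The trade-off between the two approaches: the paper's constrained minimization avoids any discussion of mountain-pass geometry or equivalence of critical levels, and hands you the subcritical $\|\Delta u_k\|_2^2$-bound by construction; your Palais--Smale approach is the more ``standard'' variational framework but demands the extra machinery of Pohozaev-PS sequences to make the Adams compactness applicable at the right threshold.
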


\begin{remark}
If $F(t)=\frac{\exp(t^{2})-1-t^{2}}{1+|t|^{\theta}}$, obviously,
$f(t)=F^{\prime}(t)$ satisfies the conditions (\ref{exponential critical}),
(i) and (ii). By the Adams inequality with exact growth (\ref{int1}), we know
that if $\theta<2$, then $\gamma^{\ast}=+\infty$, and the equation
(\ref{bi-harmonic1}) admits a non-trivial radial solution for any $\gamma>0$.
If $\theta\geq2$, then $\gamma^{\ast}<+\infty$, and the equation
(\ref{bi-harmonic1}) admits a non-trivial radial solution for any $\gamma
\in(0,\gamma^{\ast})$. Both the growth conditions\ of the nonlinearities used
in \cite{sani} and \cite{chenluzhang} imply that the constant $\gamma$
appearing in Theorem \ref{thm3} satisfies
\[
\gamma<\gamma^{\ast}.
\]

\end{remark}

\begin{corollary}
\label{coroll}Assume that $f$ satisfies\ $f(0)=0$ and the conditions
(\ref{exponential critical}), (i), (ii), and $F(t)$ satisfies
\[
\lim_{t\rightarrow\infty}\frac{t^{2}F(t)}{\exp(\alpha_{0}t^{2})}=+\infty,
\]
then for any $\gamma>0$, the equation (\ref{bi-harmonic1}) admits a
non-trivial radial solution.
\end{corollary}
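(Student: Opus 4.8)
This statement is an immediate consequence of Theorem \ref{thm3}, so the plan is simply to match the hypothesis of the corollary to the characterization of $\gamma^{\ast}=+\infty$ given there. First I would recall the ``in particular'' clause of Theorem \ref{thm3}: one has $\gamma^{\ast}=+\infty$ if and only if $\lim_{t\to\infty} t^{2}F(t)\exp(-\alpha_{0}t^{2})=+\infty$. Since this limit condition is precisely the extra assumption imposed in the corollary, we conclude $\gamma^{\ast}=+\infty$, hence the admissible interval $(0,\gamma^{\ast})$ appearing in Theorem \ref{thm3} is all of $(0,+\infty)$, and the theorem produces a non-trivial radial solution of \eqref{bi-harmonic1} for every $\gamma>0$.

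If one instead wants a self-contained verification that the Adams ratio $C_{A}^{\ast}$ is infinite under this growth, the idea is to insert into the defining supremum a family of radial test functions $u_{\varepsilon}$ concentrating at the origin --- the standard Adams--Moser truncations of the fundamental solution of $(-\Delta)^{2}$, normalized so that $\Vert\Delta u_{\varepsilon}\Vert_{2}^{2}\leq 32\pi^{2}/\alpha_{0}$ --- along which $u_{\varepsilon}(0)\to\infty$ while $\Vert u_{\varepsilon}\Vert_{2}^{2}\to 0$ at a controlled polynomial rate. The assumption $t^{2}F(t)\gg\exp(\alpha_{0}t^{2})$ then forces $\int_{\mathbb{R}^{4}}F(u_{\varepsilon})\,dx$ to blow up faster than $\Vert u_{\varepsilon}\Vert_{2}^{2}$ decays, so that $\frac{2}{\Vert u_{\varepsilon}\Vert_{2}^{2}}\int_{\mathbb{R}^{4}}F(u_{\varepsilon})\,dx\to+\infty$; this is exactly the mechanism behind the equivalence quoted from Theorem \ref{thm3}.

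Since the substantive content is already carried by Theorem \ref{thm3} (which in turn rests on the boundedness/compactness dichotomy of Theorems \ref{thm1}--\ref{thm2}, the mountain-pass geometry on $H_{r}^{2}(\mathbb{R}^{4})$, and the Pohozaev identity controlling the minimax level), there is no real obstacle here. The only point deserving a line of care is that condition (ii), combined with the critical exponential growth, is what makes $F$ comparable to $f$ near infinity, so that writing the hypothesis in the form ``$t^{2}F(t)/\exp(\alpha_{0}t^{2})\to+\infty$'' is legitimate and equivalent to the corresponding statement for $sf(s)$.
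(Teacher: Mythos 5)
Your proposal is correct and takes the same route as the paper: the corollary is obtained by invoking the ``in particular'' equivalence in Theorem \ref{thm3}, which under the stated growth hypothesis yields $\gamma^{\ast}=+\infty$, so that $(0,\gamma^{\ast})=(0,+\infty)$ and Theorem \ref{thm3} produces the radial solution for every $\gamma>0$. Your second paragraph sketching why $C_{A}^{\ast}=+\infty$ is not needed for the corollary itself, but it correctly reflects the mechanism underlying Theorem \ref{thm1} and the definition of the threshold $R(F)$.
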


\bigskip Until now, whether the solutions obtained in Theorem
\ref{thm3} and Corollary \ref{coroll} are ground state solutions is unknown. However, if the
nonlinearity has the special form $f(t)=\lambda t\exp(2|t|^{2})$, we can prove that the solutions obtained
are ground-state solutions.
\begin{theorem}
\bigskip\label{thm4} For any $\gamma\in(0,+\infty)$, the equation
\end{theorem}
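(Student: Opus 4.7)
The plan is to establish the ground-state property by combining Fourier rearrangement (which radializes while preserving $\|u\|_2$ and $\|\Delta u\|_2$), the compactness criterion of Theorem \ref{thm2} on radial sequences, and the bi-harmonic Pohozaev identity in $\mathbb{R}^4$. Corollary \ref{coroll} already furnishes a non-trivial radial solution for every $\gamma>0$ when $f(u)=\lambda u\exp(2u^2)$; what remains is to show this solution realizes the infimum of the energy over the Nehari manifold in all of $H^2(\mathbb{R}^4)$.

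With $F(t)=\tfrac{\lambda}{4}(\exp(2t^2)-1)$, I first set
$$J_\gamma(u)=\frac{1}{2}\int_{\mathbb{R}^4}\bigl(|\Delta u|^2+\gamma u^2\bigr)\,dx-\int_{\mathbb{R}^4}F(u)\,dx,$$
the Nehari set $\mathcal{N}_\gamma=\{u\in H^2(\mathbb{R}^4)\setminus\{0\}:\langle J_\gamma'(u),u\rangle=0\}$, and put $c_\gamma=\inf_{\mathcal{N}_\gamma}J_\gamma$, $c_\gamma^r=\inf_{\mathcal{N}_\gamma\cap H_r^2}J_\gamma$. The inequality $c_\gamma\leq c_\gamma^r$ is immediate; the core task is to prove $c_\gamma^r\leq c_\gamma$ together with attainment of $c_\gamma^r$. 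For the reverse inequality, I would use the Fourier rearrangement $u^\sharp:=\mathcal{F}^{-1}\bigl((\mathcal{F}u)^*\bigr)$. By Plancherel, $\|u^\sharp\|_2=\|u\|_2$ and $\|\Delta u^\sharp\|_2=\|\Delta u\|_2$; moreover, rewriting $\|u\|_{2k}^{2k}=\|\hat u\ast\cdots\ast\hat u\|_2^2$ (with $k$ factors) and applying the Brascamp--Lieb--Luttinger rearrangement inequality yields $\|u\|_{2k}\leq\|u^\sharp\|_{2k}$ for every $k\in\mathbb{N}$. Summing the Taylor expansion of $F$ then gives $\int F(u)\,dx\leq\int F(u^\sharp)\,dx$. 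For any $u\in\mathcal{N}_\gamma$, choose the unique scalar $t=t(u^\sharp)>0$ with $tu^\sharp\in\mathcal{N}_\gamma$; a direct computation along the Nehari fibration shows $J_\gamma(tu^\sharp)\leq J_\gamma(u)$, hence $c_\gamma^r\leq c_\gamma$.

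To realize $c_\gamma^r$, take a radial minimizing sequence $(u_n)\subset\mathcal{N}_\gamma\cap H_r^2$. The explicit shape of $F$ gives $H^2$-boundedness of $(u_n)$ and confines $\|\Delta u_n\|_2^2$ strictly below the critical Adams threshold $32\pi^2/\alpha_0=16\pi^2$. Passing to $u_n\rightharpoonup u_\infty$ in $H_r^2$, Theorem \ref{thm2} yields $\int F(u_n)\,dx\to\int F(u_\infty)\,dx$ and a matching convergence for $\int u_n f(u_n)\,dx$. The bi-harmonic Pohozaev identity in $\mathbb{R}^4$ reduces to
$$\gamma\|u\|_2^2=2\int_{\mathbb{R}^4}F(u)\,dx$$
(the $\|\Delta u\|_2^2$ term dropping out because $n=2m$); combined with the Nehari identity $\|\Delta u\|_2^2+\gamma\|u\|_2^2=\int u f(u)\,dx$ it rules out $u_\infty\equiv 0$, since otherwise $\int F(u_n)\,dx\to 0$ would force $\|u_n\|_{H^2}\to 0$, contradicting $c_\gamma>0$. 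Hence $u_\infty\in\mathcal{N}_\gamma$ achieves $c_\gamma^r=c_\gamma$ and is a ground state.

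The main obstacle is the Fourier rearrangement inequality $\|u\|_{2k}\leq\|u^\sharp\|_{2k}$ for $k\geq 2$: the case $k=1$ is Plancherel, but for higher $k$ it requires the BLL inequality applied to successive self-convolutions of $\hat u$, together with a careful justification of termwise comparison through the exponential Taylor series. A secondary difficulty is controlling the minimizing sequence strictly below the Adams critical level; since the Ambrosetti--Rabinowitz condition can degenerate at $t=0$ for $f(u)=\lambda u\exp(2u^2)$, this must be handled using the explicit form of $F$ and the exact-growth Adams inequality (\ref{int1}) rather than a generic superquadratic estimate.
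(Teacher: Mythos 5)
Your outline captures some of the right ingredients (Fourier rearrangement, the compactness criterion of Theorem \ref{thm2}), but it diverges from the paper's argument at the decisive point, and several of the intermediate claims are incorrect.

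First, the paper does \emph{not} minimize over the Nehari manifold. The key structural choice in Section~5 is to minimize over the Pohozaev manifold $\mathcal{P}=\{G_\lambda(u)=0\}$, where $G_\lambda(u)=(\gamma-\lambda)\|u\|_2^2-\int g_\lambda(u)\,dx$ and $g_\lambda(t)=\tfrac{\lambda}{2}(e^{2t^2}-1-2t^2)$. On that set the constrained minimum is literally $A_\lambda=\inf\tfrac12\|\Delta u\|_2^2$, so the energy level of a minimizing sequence coincides with the Dirichlet energy itself, and the strict inequality $A_\lambda<8\pi^2$ (Lemma~\ref{lem5}, proved using the fact that the Adams ratio of $g_\lambda$ is $+\infty$) immediately places the minimizing sequence strictly under the Adams threshold $16\pi^2=32\pi^2/\alpha_0$. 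In your Nehari formulation this translation breaks down: the value $c_\gamma=\inf_{\mathcal{N}_\gamma}J_\gamma$ does not control $\|\Delta u_n\|_2^2$ in a transparent way (the best you get from the Nehari constraint and the elementary inequality $(e^{2t^2}-1)2t^2\leq 2\big(2t^2 e^{2t^2}-(e^{2t^2}-1)\big)$ is an $H^2$-bound proportional to $c_\gamma$, with a constant that offers no mechanism for getting strictly below $16\pi^2$). Your sentence ``the explicit shape of $F$ \dots confines $\|\Delta u_n\|_2^2$ strictly below the critical Adams threshold'' is an assertion, not an argument, and it is precisely the difficulty the Pohozaev formulation is designed to dissolve.

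Second, two subsidiary claims are wrong. (a) The identity ``$\|\Delta u^\sharp\|_2=\|\Delta u\|_2$ by Plancherel'' is false: the Fourier rearrangement of Lenzmann--Sok gives $\|\Delta u^\sharp\|_2\leq\|\Delta u\|_2$, with equality only in degenerate cases, because Plancherel does not commute with the weight $|\xi|^4$ and rearrangement moves mass toward low frequencies. Fortunately the inequality direction is the favorable one and your fibration argument survives, but the justification is incorrect. (b) Invoking Corollary~\ref{coroll} to produce an initial nontrivial solution fails: that corollary assumes the Ambrosetti--Rabinowitz condition (i), and $f(t)=\lambda t e^{2t^2}$ violates it near $t=0$ (one needs $\mu\leq 2$ for $\mu F\leq tf$ to hold for small $t$). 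The very reason Section~5 of the paper is a separate, self-contained proof is that this nonlinearity does not fall under the general hypotheses of Theorem~\ref{thm3}; you cannot borrow existence from there. Finally, the appeal to the Pohozaev identity to exclude $u_\infty\equiv 0$ is logically misplaced: the Pohozaev identity holds for \emph{solutions}, not for arbitrary Nehari minimizing sequences, and in the paper's framework the nonvanishing of the weak limit follows directly from the compactness of $\int g_\lambda(u_k)$ together with the constraint $G_\lambda(u_k)=0$, with no need to invoke Pohozaev at this stage.

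In short: the Fourier rearrangement step and the use of Theorem~\ref{thm2} are in the spirit of the paper, but switching from the Pohozaev manifold to the Nehari manifold removes the mechanism that keeps the minimizing sequence subcritical, and that gap is not filled in your outline.
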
%
\[
(-\Delta)^{2}u+\gamma u=\lambda u\exp(2|u|^{2})\ \text{in}~\mathbb{R}^{4}%
\]
admits a radial ground state solution if $\lambda\in\left(  0,\gamma\right)  $.
\vskip0.1cm

Based on the above Theorem, we can also obtain the existence of ground state
solutions of the bi-harmonic equation with the Rabinowitz type potential, that is,
the potential $V\left(  x\right)  $ is a continuous function satisfying
\begin{equation}
0<\lambda<V_{0}=\underset{x\in\mathbb{R}^{4}}{\inf}V\left(  x\right)
<\underset{\left\vert x\right\vert \rightarrow\infty}{\lim}V\left(  x\right)
=\gamma<+\infty.\label{Rabinowitz}%
\end{equation}
This kind of potential was first introduced by Rabinowitz in
\cite{Rabinowitz}.

\begin{theorem}
\bigskip\label{thm5} Assume that $V\left(  x\right)  $ is a continuous
function satisfying\ (\ref{Rabinowitz}), the equation
\begin{equation}
(-\Delta)^{2}u+V\left(  x\right)  u=\lambda u\exp(2|u|^{2})\ \text{in}%
~\mathbb{R}^{4}\ \ \label{potential}%
\end{equation}
admits a non-radial ground state solution.
\end{theorem}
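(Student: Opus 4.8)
The plan is to work in the Hilbert space $H=H^{2}(\mathbb{R}^{4})$ equipped with the norm $\|u\|_{V}^{2}=\int_{\mathbb{R}^{4}}(|\Delta u|^{2}+V(x)|u|^{2})\,dx$, which is equivalent to the standard norm because of \eqref{Rabinowitz}, and to minimize the associated energy functional $I_{V}(u)=\tfrac12\|u\|_{V}^{2}-\tfrac{\lambda}{4}\int_{\mathbb{R}^{4}}(\exp(2|u|^{2})-1)\,dx$ over its Nehari manifold $\mathcal N_{V}=\{u\in H\setminus\{0\}:\langle I_{V}'(u),u\rangle=0\}$. First I would record the basic variational structure: using the Adams inequality \eqref{Adams entire space} together with the assumption $\lambda<V_{0}$, one checks that $I_{V}$ has a mountain-pass geometry, that every $u\neq0$ has a unique positive multiple on $\mathcal N_{V}$, and that the mountain-pass level $c_{V}$ equals the Nehari level $m_{V}:=\inf_{\mathcal N_{V}}I_{V}>0$. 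The companion problem is the constant-coefficient equation at infinity, $(-\Delta)^{2}u+\gamma u=\lambda u\exp(2|u|^{2})$, whose ground-state energy $m_{\gamma}$ and a radial ground state $w$ are provided by Theorem 2.10; the key comparison is the strict inequality $m_{V}<m_{\gamma}$, obtained by testing $I_{V}$ along the Nehari path through a suitable translate of $w$ and using that $V(x)<\gamma$ on a set of positive measure (so that strict inequality, not just $\le$, holds after projecting back onto $\mathcal N_{V}$).

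Next I would run the concentration–compactness / splitting argument on a minimizing (Palais–Smale) sequence $\{u_{k}\}\subset\mathcal N_{V}$ for $m_{V}$, which may be taken bounded in $H$ by the Ambrosetti–Rabinowitz-type structure implicit in $f(t)=\lambda t\exp(2|t|^{2})$. Passing to a weak limit $u_{k}\rightharpoonup u$, the goal is to show $u\neq0$ and that $u$ is the desired minimizer. Suppose first $u=0$. Then a vanishing-versus-nonvanishing dichotomy applies: if $\{u_{k}\}$ vanishes in the sense that $\sup_{y}\int_{B_{1}(y)}|u_{k}|^{2}\to0$, one derives a contradiction with $u_{k}\in\mathcal N_{V}$ and $I_{V}(u_{k})\to m_{V}>0$ by a Lions-type lemma adapted to the exponential nonlinearity (here the exact-growth Adams inequality \eqref{int1} is the clean tool, controlling $\int(\exp(2|u_{k}|^{2})-1)$ by $\int|u_{k}|^{2}$ once the gradient is suitably small, and the subcritical bound when it is not); if instead there is no vanishing, one translates, $\tilde u_{k}(\cdot)=u_{k}(\cdot+y_{k})$ with $|y_{k}|\to\infty$ (else the weak limit would be nonzero), and since $V(x+y_{k})\to\gamma$ locally uniformly, $\tilde u_{k}$ becomes a Palais–Smale sequence at level $m_{V}$ for the limit functional $I_{\gamma}$, forcing $m_{\gamma}\le m_{V}$, contradicting $m_{V}<m_{\gamma}$. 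Hence $u\neq0$.

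With $u\neq0$ I would show $u$ itself realizes $m_{V}$: a Brezis–Lieb / Fatou argument gives $I_{V}(u)\le\liminf I_{V}(u_{k})=m_{V}$, and testing against $u$ shows $\langle I_{V}'(u),u\rangle\le0$, so there is $t\in(0,1]$ with $tu\in\mathcal N_{V}$ and $I_{V}(tu)\le I_{V}(u)\le m_{V}$; minimality forces $t=1$, i.e. $u\in\mathcal N_{V}$ and $I_{V}(u)=m_{V}$. Then $u$ is a critical point of $I_{V}$ on $\mathcal N_{V}$, hence (by the usual Lagrange-multiplier/Nehari argument) a weak solution of \eqref{potential}, and being a minimizer on $\mathcal N_{V}$ with $c_{V}=m_{V}$ it is a ground state. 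Finally, non-radiality: if the ground state were radial, then since $V$ is not radial in general the energy could be strictly lowered by an appropriate translation placing more mass where $V$ is smaller — more precisely the strict inequality $m_{V}<m_{\gamma}$ together with the fact that radial competitors are governed by the limit profile shows the minimizer cannot be radial. I expect the main obstacle to be the vanishing case of the dichotomy: ruling out $\sup_{y}\int_{B_{1}(y)}|u_{k}|^{2}\to0$ requires carefully combining the exact-growth Adams inequality with the Nehari constraint to bootstrap a lower bound on the $L^{2}$-mass, since the exponential nonlinearity does not interact with scaling as transparently as a pure power; the strict energy comparison $m_{V}<m_{\gamma}$ is the other delicate point, as it must be proved without a Pólya–Szegő inequality and while only knowing the limit ground state $w$ abstractly from Theorem 2.10.
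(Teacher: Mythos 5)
Your overall framework — Nehari manifold, strict comparison with the limit level, a compactness analysis of the minimizing sequence, and a projection back onto $\mathcal N_V$ — is the same framework the paper uses, and the outlines of several steps (boundedness of the sequence, the strict inequality $m_V<m_\infty$ via translates of the limit ground state $w$, and the exclusion of the case $u=0$) match the paper's Lemmas~\ref{impor}--\ref{vanished all} in spirit. But there is a genuine gap at the step where you say ``testing against $u$ shows $\langle I_V'(u),u\rangle\le0$, so there is $t\in(0,1]$ with $tu\in\mathcal N_V$ and $I_V(tu)\le I_V(u)\le m_V$.'' Neither half of this is automatic. Weak lower semicontinuity gives $\|u\|_V^2\le\liminf\|u_k\|_V^2$, but Fatou also gives $\int\exp(2u^2)u^2\,dx\le\liminf\int\exp(2u_k^2)u_k^2\,dx$, so the sign of $N_V(u)=\|u\|_V^2-\lambda\int\exp(2u^2)u^2\,dx$ is undetermined; and if $u\notin\mathcal N_V$ then $I_V(u)\le m_V$ has no a priori justification. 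In fact the hard case is precisely $N_V(u)>0$, where part of the sequence carries a positive amount of the exponential mass off to infinity while the weak limit $u$ is nonzero. This is the ``$0<l<\beta$'' regime that the paper treats with a dedicated splitting machinery: it chooses radii $R_j\to\infty$, shows the relevant integrals over the annuli $C_j=B_{R_j+1}\setminus B_{R_j}$ are $o_j(1)$, cuts $u_j=u_j'+u_j''$ with smooth cutoffs, proves the energy and the exponential term split additively up to $o_j(1)$ (Lemma~\ref{spit lemma}), uses the concentration--compactness improvement of the Adams inequality (Lemma~\ref{strong copy 2}) to get strong convergence of the nonlinear term near the origin, and then shows that $N_V(u)>0$ forces $u_j''$ to satisfy $N_\infty(u_j'')<0$ eventually, whence $m_\infty\le I_\infty(t_ju_j'')\le I_V(u_j)+o_j(1)\to m_V$, contradicting $m_V<m_\infty$ (Lemma~\ref{adxin1}). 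Without this step your argument cannot conclude that $u\in\mathcal N_V$.

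A smaller but also real issue: the Fatou step does yield $\frac{\lambda}{4}\int_{\mathbb R^4}\bigl(\exp(2u^2)2u^2-(\exp(2u^2)-1)\bigr)dx\le m_V$ (the paper's Lemma~6.11), and \emph{that} is what combines with $N_V(u)<0$ to give a contradiction via the projection $tu\in\mathcal N_V$ with $t<1$ — not the bound $I_V(u)\le m_V$, which you invoke but which is unavailable. Rephrasing the monotonicity argument around this weighted quantity, rather than around $I_V(u)$ itself, fixes that step. Your treatment of the $u=0$ case by translating and passing to a Palais--Smale sequence for $I_\infty$ is a legitimate alternative to the paper's more direct route (which uses $\int(\gamma-V)|u_k|^2\to0$ and a projection $t_ku_k\in\mathcal N_\infty$ with $t_k\to1$), and the comparison $m_V<m_\infty$ via translates of $w$ is essentially the paper's argument, though you should verify the uniform boundedness of the projection parameters $\tilde t_u$ as the paper does to justify passing to limits.
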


\bigskip In general, the ground state solution can be constructed by showing
that the infimum on the Pohozaev or Nehari manifold is achieved. This is
equivalent to proving that the mountain-pass minimax level is
achieved (see \cite{Ambrosetti},\cite{Rabinowitz1},\cite{Rabinowitz2}). In the
proof of Theorem \ref{thm4},\ we cannot use the Schwarz symmetrization principle directly. In order to overcome this difficulty, we will apply
the Fourier rearrangement proved by Lenzmann and Sok in \cite{Lenzmann} to obtain a radially minimizing sequence for the
infimum on the Pohozaev manifold. While in the proof of Theorem
\ref{thm5},\ we will exploit the relationship between the Nehari manifold and
the corresponding limiting Nehari manifold.
\vskip0.1cm

Using Proposition \ref{sanire} and carrying out the same proof procedure of
Theorem \ref{thm5}, we can also obtain the existence of ground state solutions
of the following Laplacian equation with the Rabinowitz type potential introduced in \cite{Rabinowitz}:
\begin{equation}
-\Delta u+V\left(  x\right)  u=\lambda u\exp(|u|^{2})\ \text{in}%
~\mathbb{R}^{2}.\ \ \label{laplacian}%
\end{equation}

\begin{theorem}
\bigskip\label{thm6} Assume that $V\left(  x\right)  $ is a continuous
function satisfying
\[
0<\lambda<V_{0}=\underset{x\in\mathbb{R}^{2}}{\inf}V\left(  x\right)
<\underset{\left\vert x\right\vert \rightarrow\infty}{\lim}V\left(  x\right)
=\gamma<+\infty,
\]
the equation (\ref{laplacian}) admits a non-radial ground state solution.
\end{theorem}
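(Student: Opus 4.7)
The plan is to reproduce the bi-harmonic argument outlined for Theorem \ref{thm5}, but in the $\mathbb{R}^{2}$ Trudinger--Moser setting, using Proposition \ref{sanire} in the role that Theorem \ref{thm4} played for \eqref{potential}. Work in the Hilbert space $H^{1}(\mathbb{R}^{2})$ equipped with the equivalent norm $\|u\|_{V}^{2}=\int_{\mathbb{R}^{2}}(|\nabla u|^{2}+V(x)u^{2})\,dx$, set $f(s)=\lambda s\exp(|s|^{2})$, $F(s)=\frac{\lambda}{2}(e^{|s|^{2}}-1)$, and consider the $C^{1}$ functional
\[
I_{V}(u)=\tfrac{1}{2}\|u\|_{V}^{2}-\int_{\mathbb{R}^{2}}F(u)\,dx
\]
together with its Nehari manifold $\mathcal{N}_{V}=\{u\in H^{1}(\mathbb{R}^{2})\setminus\{0\}:\langle I_{V}'(u),u\rangle=0\}$ and the level $c_{V}=\inf_{\mathcal{N}_{V}}I_{V}$. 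Since $f$ satisfies hypotheses (\ref{exponential critical}), (i), (ii) and since $\tfrac{t^{2}F(t)}{\exp(t^{2})}\to\infty$, Proposition \ref{sanire} (applied with constant potential $\gamma$) provides a positive radial ground state $w$ for the limit problem $-\Delta u+\gamma u=\lambda u\,e^{|u|^{2}}$, realizing the limit level $c_{\infty}=\inf_{\mathcal{N}_{\infty}}I_{\infty}>0$, which will be the compactness threshold.

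The decisive step is the strict comparison
\[
c_{V}<c_{\infty}.
\]
For each $y\in\mathbb{R}^{2}$ there is a unique $t_{y}>0$ with $t_{y}w(\cdot-y)\in\mathcal{N}_{V}$, and because $w$ is a Nehari minimizer for $I_{\infty}$, $\max_{t>0}I_{\infty}(tw)=c_{\infty}$. Writing
\[
I_{V}(t_{y}w(\cdot-y))=I_{\infty}(t_{y}w)-\tfrac{t_{y}^{2}}{2}\int_{\mathbb{R}^{2}}(\gamma-V(x))\,w(x-y)^{2}dx\le c_{\infty}-\tfrac{t_{y}^{2}}{2}\int(\gamma-V)w(\cdot-y)^{2},
\]
the Rabinowitz condition $V_{0}<\gamma$ forces $\gamma-V>0$ on a set of positive measure, and choosing $y$ near the minimum of $V$ makes the correction strictly positive, yielding $c_{V}<c_{\infty}$. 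The bound $\lambda<V_{0}$ is what guarantees the mountain-pass geometry of $I_{V}$ and the positivity $c_{V}>0$.

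Next take a minimizing sequence $\{u_{n}\}\subset\mathcal{N}_{V}$ with $I_{V}(u_{n})\to c_{V}$. The Ambrosetti--Rabinowitz condition (i) gives $H^{1}$-boundedness, and $c_{V}<c_{\infty}$ together with the exact-growth Trudinger--Moser inequality \eqref{exact} keeps $\|\nabla u_{n}\|_{2}^{2}$ strictly below the critical threshold $4\pi/\alpha_{0}$, so the nonlinearity is uniformly integrable along the sequence. Extract a weak limit $u_{n}\rightharpoonup u$ in $H^{1}(\mathbb{R}^{2})$. If $u\not\equiv 0$, a standard Brezis--Lieb splitting in $I_{V}$ (combined with the fact that $f(u_{n})u_{n}$ lies in $L^{1}$ uniformly by Theorem \ref{thm1} transplanted to $\mathbb{R}^{2}$) gives that $u\in\mathcal{N}_{V}$ with $I_{V}(u)\le c_{V}$, so $u$ is a ground state. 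To rule out vanishing or mass escaping to infinity, use Lions' lemma: if there were $y_{n}$ with $|y_{n}|\to\infty$ and $u_{n}(\cdot+y_{n})\rightharpoonup v\not\equiv 0$, then since $V(\cdot+y_{n})\to\gamma$ a.e., $v$ would lie in $\mathcal{N}_{\infty}$ and $I_{V}(u_{n})\ge I_{V}(u)+I_{\infty}(v)+o(1)\ge c_{\infty}+o(1)$, contradicting $c_{V}<c_{\infty}$. Hence $u\not\equiv 0$ and is the desired ground state; since $V$ is not assumed radial, the ground state cannot be sought in $H^{1}_{r}(\mathbb{R}^{2})$ and is in general non-radial (it concentrates near the set where $V$ is smallest, breaking any ambient symmetry).

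The main obstacle is the strict comparison $c_{V}<c_{\infty}$. The exponential nonlinearity $\lambda s e^{s^{2}}$ is not homogeneous, so the map $t\mapsto I_{\infty}(tw)$ is not explicitly solvable and the maximizer $t=1$ must be exploited only via the Nehari constraint; one must argue that the projection constant $t_{y}$ stays bounded away from $0$ and $\infty$ uniformly for $y$ in a neighborhood of the minimum of $V$, so that the gain $\tfrac{t_{y}^{2}}{2}\int(\gamma-V)w(\cdot-y)^{2}$ is strictly positive. A secondary technical point is to verify, using condition (ii) and the exact-growth Trudinger--Moser inequality \eqref{exact}, that the nonlinear term passes to the limit along a Palais--Smale sequence at level $c_{V}$ even though we are in the critical exponential regime -- this is where the $\mathbb{R}^{2}$ analog of Theorems \ref{thm1}--\ref{thm2} is used, together with the strict gap $c_{V}<c_{\infty}$ that prevents saturation of the Trudinger--Moser threshold.
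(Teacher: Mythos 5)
Your proposal takes essentially the same Nehari--manifold route that the paper itself designates for Theorem~\ref{thm6}, namely transplanting the proof of Theorem~\ref{thm5} (Section~6) from $\mathbb{R}^{4}$ to $\mathbb{R}^{2}$ with Proposition~\ref{sanire} supplying the limit ground state in place of Theorem~\ref{thm4}. Both arguments introduce $\mathcal{N}_{V}$, $\mathcal{N}_{\infty}$, establish the strict gap between the Nehari levels via translates, and then extract a ground state from a minimizing sequence by a concentration-compactness analysis. So at the level of strategy you are aligned with the paper.

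The one place where you deviate in an interesting way is the strict inequality $c_{V}<c_{\infty}$. You take a \emph{fixed} translate $w(\cdot-y)$, project it onto $\mathcal{N}_{V}$ by a scalar $t_{y}$, and observe
$I_{V}(t_{y}w(\cdot-y))=I_{\infty}(t_{y}w)-\tfrac{t_{y}^{2}}{2}\int(\gamma-V)w(\cdot-y)^{2}\le c_{\infty}-(\text{positive correction})$.
The paper instead sends $|y_{k}|\to\infty$, shows the projection constants $t_{k}\to1$, and then uses $t_{k}<1$ together with the monotonicity of $s\mapsto \exp(s^{2})2s^{2}-(\exp(s^{2})-1)$ to get $I_{V}(t_{k}w_{y_{k}})<m_{\infty}$. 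Your variant is shorter and does not need the asymptotics $t_{k}\to1$, since any single $y$ already produces a strictly positive correction once $V\le\gamma$ with strict inequality somewhere (both arguments implicitly use this pointwise comparison, which goes slightly beyond the literal Rabinowitz hypothesis $\inf V<\lim_{|x|\to\infty}V$).

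Where the sketch is too optimistic is the compactness step, which is exactly where the paper does all the work. First, the statement that $c_{V}<c_{\infty}$ together with the exact-growth Trudinger--Moser inequality ``keeps $\|\nabla u_{n}\|_{2}^{2}$ strictly below $4\pi/\alpha_{0}$'' is not a direct implication: on the Nehari manifold the bound on $\|\nabla u_{n}\|_{2}^{2}$ in terms of the energy level is obtained via the (\ref{control1})-type comparison, which introduces a multiplicative constant, not $2m_{V}$ on the nose; the control of the exponential integrals in the paper goes through a concentration-compactness alternative on balls (Lemma~\ref{strong copy 2}), not directly through the energy gap. Second, in your dichotomy step you assert that the translated profile $v$ ``would lie in $\mathcal{N}_{\infty}$'' and then write $I_{V}(u_{n})\ge I_{V}(u)+I_{\infty}(v)+o(1)\ge c_{\infty}$. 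From weak convergence one only gets the Fatou-type inequality $N_{\infty}(v)\le 0$; projecting back onto $\mathcal{N}_{\infty}$ uses a scalar $t_{*}<1$, so $I_{\infty}(v)$ need not dominate $c_{\infty}$ without further argument. The paper avoids this pitfall by introducing $\beta=\lim\int\exp(u_{k}^{2})u_{k}^{2}$ and $l=\int\exp(u^{2})u^{2}$, splitting $u_{j}=u_{j}'+u_{j}''$ by cutoffs on annuli (Lemma~\ref{spit lemma}), proving the required orthogonality estimates, and then showing that the two strict inequalities on the Nehari constraint (Lemmas at the end of Section~6) are impossible, which forces $u\in\mathcal{N}_{V}$ and $I_{V}(u)=m_{V}$. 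Your Brezis--Lieb plus Lions sketch is the right intuition but needs this explicit level comparison to close; as written it would not yet constitute a complete proof.
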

\vskip0.1cm

As far as we know, the Rabinowitz type potentials are only involved in the
study of equations with the subcritical polynomial growth  (see e.g., \cite{Lions}, \cite{Rabinowitz} and \cite{XWang}). In
the case of $m=1,n=2$, when we replace the operator $-\Delta$ by
$-\varepsilon^{2}\Delta$ in the above theorem when the nonlinear term has the exponential growth, the existence of semiclassical
state $u_{\varepsilon}$ was obtained by\ Alves and Figueiredo in $\mathbb{R}^2$ 
\cite{Alves-f} if $\varepsilon<<1$. For other related work on the semiclassical state of nonlinear Schr\"{o}dinger equations in the case of subcritical nonlinear polynomial growth, we just name a few among a vast literature, e.g.,  \cite{Ambrosetti1, Ambrosetti2, DingNi, KangWei, LuWei, XWang}, the book \cite{AmbrosettiMalchiodi}  and many references therein.
Nevertheless, as far as we are concerned, nothing is known if $\varepsilon=1$ and the nonlinear term has the exponential growth.
Theorem \ref{thm6} appears to be the first existence result for equation with the critical exponential growth involving
the Rabinowitz type trapping potential.
\vskip0.1cm

This paper is organized as follows. Section 3 is devoted to the proof of the
necessary and sufficient conditions for the boundedness and the compactness of
general nonlinear functionals in $H^{2}(\mathbb{R}^{4})$. In Section 4, we
will prove the existence of non-trivial solutions of the equation
(\ref{bi-harmonic1}) with the constant potential under a very general assumption
on the nonlinearity. In Section 5, we prove the existence of ground state
solutions for the bi-harmonic equation (\ref{bi-harmonic1}) with the constant
potential when the nonlinearity has the special form $f(s)=\lambda s\exp(2s^{2})$. In
Section 6, we prove the existence of ground state solutions for the
bi-harmonic equation (\ref{bi-harmonic1}) with the Rabinowitz type potential.

Throughout this paper, the letter $c$ always denotes some positive constant
which may vary from line to line.

\section{ Necessary and sufficient conditions for the boundedness and
compactness}

In this section, we will give the necessary and sufficient conditions for the
boundedness and the compactness of general nonlinear functionals.

\begin{proof}
[Proof of Theorems \ref{thm1} and \ref{thm2}]\textit{ }\emph{\medskip
Necessity of (1) in Theorem \ref{thm1} and (3) in Theorem \ref{thm2}: }
\vskip0.1cm

In order to prove the necessity of (1), we only need to verify that
if (1) fails, then there exists a sequence $\{u_{k}\}_{k}\in
H^{2}(\mathbb{R}^{4})$ satisfying
\[
\int_{\mathbb{R}^{4}}|\Delta u_{k}|^{2}dx\leq32\pi^{2}K,\ \ \int
_{\mathbb{R}^{4}}|u_{k}|^{2}dx\rightarrow0,\ \ G(u_{k})\rightarrow\infty.
\]
Similarly, in order to prove the necessity of (3), we only need to show that
if (3) fails, then there exists a radially symmetric sequence ~$\{u_{k}%
\}_{k}\in H^{2}(\mathbb{R}^{4})$ satisfying $\Vert\Delta u_{k}\Vert_{2}%
^{2}\leq32\pi^{2}K$ and weakly converging to $0$, such that $G(u_{k})>\delta$
for some $\delta>0$.

First, we consider the case that the conditions (1) and
(3) fails at the origin. Let$\{\phi_{k}\}_{k}\in H^{2}(\mathbb{R}^{4})$ be a
sequence of spherically symmetric functions given by
\[
\phi_{k}(x)=%
\begin{cases}
a_{k}, & \text{if $0\leq|x|\leq R_{k}$,}\\
a_{k}(1-R_{k}^{2}-|x|^{2}+2R_{k}|x|), & \text{if $R_{k}<|x|\leq R_{k}+1$,}\\
\text{$\eta_{k}(x),$} & \text{if $|x|>R_{k}+1$,}%
\end{cases}
\]
where $\eta_{k}$ is a smooth function satisfying
\[
\eta_{k}(x)|_{\partial B_{R_{k}+1}}=0,\ \ \frac{\partial\eta_{k}(x)}%
{\partial\nu}|_{\partial B_{R_{k}+1}}=-2a_{k},
\]
and
\[
\eta_{k}(x)|_{\partial B_{R_{k}+2}}=0,\ \ \frac{\partial\eta_{k}(x)}%
{\partial\nu}|_{\partial B_{R_{k}+2}}=0.
\]
Furthermore, we assume that $\{a_{k}\}_{k}$ and $\{R_{k}\}_{k}$ are positive sequences satisfying $\lim\limits_{k\rightarrow\infty}a_{k}=0$,
and $\lim\limits_{k\rightarrow\infty}R_{k}=\infty$.

Direct calculations show that there exists a constant $c>0$ such that
\[
\int_{\mathbb{R}^{4}}|\phi_{k}|^{2}dx\leq ca_{k}^{2}R_{k}^{4},\int
_{\mathbb{R}^{4}}|\Delta\phi_{k}|^{2}dx\leq ca_{k}^{2}R_{k}^{3},\text{ and
}G(\phi_{k})\geq\frac{\omega_{3}}{4}g(a_{k})R_{k}^{4}.
\]

If (1) is violated by $\lim\limits_{t\rightarrow0}|t|^{-2}g(t)<\infty$, then
there exists a sequence $c_{k}\rightarrow\infty$ such that $g(a_{k})\geq
c_{k}a_{k}^{2}$. Let $R_{k}=a_{k}^{-1/4}+a_{k}^{-1/2}c_{k}^{-1/8}$, then
\[
a_{k}^{2}R_{k}^{4}\rightarrow0,G(u_{k})\geq\frac{\omega_{3}}{4}c_{k}a_{k}%
^{2}R_{k}^{4}\rightarrow\infty.
\]
If (3) is violated by $\lim\limits_{t\rightarrow0}|t|^{-2}g(t)>0$, then there
exists $\delta>0$ such that $g(a_{k})\geq\delta a_{k}^{2}$. Pick $R_{k}%
=a_{k}^{-1/2}$, then $a_{k}^{2}R_{k}^{4}=1$, $a_{k}^{2}R_{k}^{3}\rightarrow0$
and
\[
G(u_{k})\geq\frac{\omega_{3}}{4}g(a_{k})R_{k}^{4}\geq\frac{\omega_{3}}%
{4}\delta>0.
\]
\medskip

What left is to consider the case when the conditions (1) and (3) do not hold at
infinity. Let $\{b_{k}\}_{k}\subset\mathbb{R}^{+}$, $b_{k}\rightarrow\infty$,
be such that
\[
\lim_{t\rightarrow+\infty}|t|^{2}\exp\left(  -\frac{1}{K}|t|^{2}\right)
g(t)=\lim_{k\rightarrow\infty}c_{k},
\]
where
\[
c_{k}:=b_{k}^{2}\exp\left(  -\frac{1}{K}b_{k}^{2}\right)  g(b_{k}).
\]
Set $R_{k}=\exp(-\frac{1}{K}b_{k}^{2})$, then~$c_{k}=b_{k}^{2}R_{k}g(b_{k})$.

Now, we consider the so-called Moser's sequence $\{\psi_{k}\}_{k}\in
H^{2}(\mathbb{R}^{4})$ consisting of spherically symmetric functions defined
by
\[
\psi_{k}\left(  x\right)  =%
\begin{cases}
b_{k}-\frac{2K|x|^{2}}{R_{k}^{1/2}b_{k}}+\frac{2K}{b_{k}}, &
\text{if\ $\ 0\leq|x|\leq R_{k}^{1/4}$,}\\
\frac{4K\left\vert \log\left\vert x\right\vert \right\vert }{b_{k}}, &
\text{if \ $R_{k}^{1/4}<|x|\leq1$,}\\
\eta_{k}, & \text{if \ $|x|>1$,}%
\end{cases}
\]
where $\eta_{k}$ is a smooth function satisfying
\[
\eta_{k}(x)|_{\partial B_{1}}=0,\ \ \frac{\partial\eta_{k}(x)}{\partial\nu
}|_{\partial B_{1}}=\frac{4K}{b_{k}},
\]
and
\[
\eta_{k}(x)|_{\partial B_{2}}=0,\ \ \frac{\partial\eta_{k}(x)}{\partial\nu
}|_{\partial B_{2}}=0.
\]
Careful computations yield that there exists a constant $c>0$ such that
\[
\int_{\mathbb{R}^{4}}|\psi_{k}|^{2}dx\leq\frac{cK^{2}}{b_{k}^{2}}%
,\int_{\mathbb{R}^{4}}|\Delta\phi_{k}|^{2}dx=32\pi^{2}K+O(\frac{1}{b_{k}^{2}%
}),
\]
and \
\[
G(\psi_{k})\geq\frac{\omega_{3}}{4}g(b_{k})R_{k}=\frac{\omega_{3}}{4}%
\frac{c_{k}}{b_{k}^{2}}.
\]
Define a new sequence $\{u_{k}\}_{k}\in H^{2}(\mathbb{R}^{4})$ by
$u_{k}(x)=\psi_{k}(x/S_{k})$, then
\[
\int_{\mathbb{R}^{4}}|u_{k}|^{2}dx\leq\frac{cS_{k}^{4}K^{2}}{b_{k}^{2}}%
,\int_{\mathbb{R}^{4}}|\Delta u_{k}|^{2}dx=32\pi^{2}K+O(\frac{1}{b_{k}^{2}})
\]
and
\[
G(u_{k})=S_{k}^{4}G(\psi_{k})\geq\frac{\omega_{3}}{4}\frac{{S_{k}^{4}}c_{k}%
}{b_{k}^{2}}.
\]
Assume that the condition (1) does not hold at infinity, namely
\[
\lim_{t\rightarrow+\infty}|t|^{2}\exp\left(  -\frac{1}{K}|t|^{2}\right)
g(t)=\lim_{k\rightarrow\infty}c_{k}=\infty.
\]

Set~$S_{k}^{4}=b_{k}^{2}c_{k}^{-1/2}$, there holds
\[
\int_{\mathbb{R}^{4}}|u_{k}|^{2}dx\leq\frac{cS_{k}^{4}K^{2}}{b_{k}^{2}%
}\rightarrow0,\text{ and}\ G(u_{k})\geq\frac{\omega_{3}}{4}\frac{S_{k}%
^{4}c_{k}}{b_{k}^{2}}\rightarrow\infty.
\]
Assume that the condition (3) fails at infinity, namely, there exist some
$\delta>0$ such that
\[
\lim_{t\rightarrow+\infty}|t|^{2}\exp\left(  -\frac{1}{K}|t|^{2}\right)
g(t)=\lim_{k\rightarrow\infty}c_{k}=\delta>0.
\]
Set~$S_{k}^{4}=b_{k}^{2}$, we can easily verify that $u_{k}\rightarrow0$ a.e.
$\mathbb{R}^{4}$, and
\[
\int_{\mathbb{R}^{4}}|u_{k}|^{2}dx\leq cK^{2},\int_{\mathbb{R}^{4}%
}|\Delta u_{k}|^{2}dx=32\pi^{2}K+O(\frac{1}{b_{k}^{2}}).
\]
Moreover, we also have $u_{k}\rightarrow0$ a.e. $\mathbb{R}^{4}$ and
\[
G(u_{k})\geq\frac{\omega_{3}}{4}\frac{S_{k}^{4}\delta}{b_{k}^{2}}=\frac
{\omega_{3}\delta}{4}>0.
\]
This accomplish the proof of the necessity of (1) and (3).
\vskip0.1cm

\emph{Sufficiency of (1) and (2):}

\vskip0.1cm
We first prove that (1) can imply (2).
Define a new Borel measurable function $\tilde{g}(t)$ by $\tilde
{g}(t)=g((32\pi^{2}K)^{\frac{1}{2}}t)$. Obviously,
\[
\ \lim_{t\rightarrow+\infty}|t|^{2}\exp(-32\pi^{2}|t|^{2})\tilde{g}%
(t)<\infty,\text{ and }\lim_{t\rightarrow0}|t|^{-2}\tilde{g}(t)<\infty.
\]
By the Adams' inequality (\ref{int1}) with the exact growth in $\mathbb{R}^{4}$, we derive that%
\[
\int_{\mathbb{R}^{4}}\tilde{g}(u)dx\leq c\int_{\mathbb{R}^{4}}\frac{\Phi
(32\pi^{2}|u|^{2})}{(1+|u|)^{2}}dx\leq c\int_{\mathbb{R}^{4}}|u|^{2}dx
\]
Let~$v=(32\pi^{2}K)^{-1/2}u$. Then for any $u\in H^{2}(\mathbb{R}^{4})$
satisfying $\Vert\Delta u\Vert_{2}^{2}\leq32\pi^{2}K$, there holds%
\[
\int_{\mathbb{R}^{4}}g(u)dx=\int_{\mathbb{R}^{4}}\tilde{g}(v)dx\leq
c\int_{\mathbb{R}^{4}}|v|^{2}dx\leq c\int_{\mathbb{R}^{4}}|u|^{2}dx.
\]

Now, we turn to prove the sufficiency of (3). Let $g:\mathbb{R}\rightarrow
\lbrack0,+\infty)$ be a continuous function satisfying
\begin{equation}
\lim\limits_{t\rightarrow+\infty}|t|^{2}\exp(-\frac{|t|^{2}}{K}%
)g(t)=0\label{s1}%
\end{equation}
and
\begin{equation}
\lim_{t\rightarrow0}|t|^{-2}g(t)=0.\label{s2}%
\end{equation}
For any radially symmetric sequence $\{u_{k}\}_{k}$ satisfying $\Vert\Delta
u\Vert_{2}^{2}\leq32\pi^{2}K$, and weakly converging to $u$, we will verify
that
\[
\lim_{k\rightarrow\infty}G(u_{k})-G(u)=\lim_{k\rightarrow\infty}%
\int_{\mathbb{R}^{4}}(g(u_{k})-g(u))dx=0.
\]

Note that $\{u_{k}\}_{k}$~is a radial sequence in $H^{2}(\mathbb{R}^{4})$,
then
\[%
\begin{split}
\left\vert u_{k}(r)\right\vert ^{2}  &  \leq\int_{r}^{+\infty}2|u_{k}%
(s)||u_{k}^{\prime}(s)|ds\\
&  \leq cr^{-3}\int_{r}^{+\infty}|u_{k}(s)s^{3/2}||u_{k}^{\prime}%
(s)s^{3/2}|ds\\
&  \leq cr^{-3}\left(  \int_{\mathbb{R}^{4}}|\nabla u_{k}|^{2}dx\right)
^{1/2}\left(  \int_{\mathbb{R}^{4}}|u_{k}|^{2}dx\right)  ^{1/2}.
\end{split}
\]
Hence $u_{k}(r)\rightarrow0$ as $r\rightarrow\infty$ uniformly with respect to
$k$. This together with (\ref{s1}) yields that for any $\varepsilon>0$, there
exists $R>0$ such that
\begin{equation}
\int_{\mathbb{R}^{4}\setminus B_{R}}g(u_{k})dx\leq\varepsilon\int
_{\mathbb{R}^{4}}|u_{k}|^{2}dx\leq c\varepsilon,\int_{\mathbb{R}^{4}\setminus
B_{R}}g(u)dx\leq\varepsilon. \label{s3}%
\end{equation}
On the other hand, through (\ref{s2}) we derive that for any $\varepsilon>0$,
there exists $L>0$ independent of $k$ such that
\[
\int_{|u_{k}|>L}g(u_{k})dx\leq c\varepsilon\int_{|u_{k}|>L}\frac{\exp
(-\frac{1}{K}|u_{k}|^{2})}{|u_{k}|^{2}}dx
\]
and
\[
\int_{|u|>L}g(u)dx\leq c\varepsilon\int_{|u|>L}\frac{\exp(-\frac{1}{K}%
|u|^{2})}{|u|^{2}}dx.
\]
In view of (\ref{int1}), we derive that
\begin{equation}
\int_{|u_{k}|>L}g(u_{k})dx\leq c\varepsilon\int_{\mathbb{R}^{4}}|u_{k}%
|^{2}dx\leq c\varepsilon,\ \ \int_{|u|>L}g(u)dx\leq\varepsilon. \label{s4}%
\end{equation}
Combining (\ref{s3}) and (\ref{s4}), one can get%
\begin{align*}
\lim_{k\rightarrow\infty}\left\vert G(u_{k})-G(u)\right\vert  &  \leq\left(
\int_{\mathbb{R}^{4}\setminus B_{R}}+\int_{B_{R}}\right)  \left\vert
g(u_{k})-g(u)\right\vert dx\\
&  \leq c\varepsilon+\lim_{k\rightarrow\infty}\left(  \int_{|u_{k}|>L}%
g(u_{k})dx+\int_{|u|>L}g(u)dx\right) \\
&\ \  +\lim_{k\rightarrow\infty}\left(  \int_{|u_{k}|\leq L,|x|\leq R}%
g(u_{k})dx-\int_{|u|\leq L,|x|\leq R}g(u)dx\right) \\
&  \leq c\varepsilon+\lim_{k\rightarrow\infty}\left(  \int_{|u_{k}|\leq
L,|x|\leq R}g(u_{k})dx-\int_{|u|\leq L,|x|\leq R}g(u)dx\right) \\
&  \leq c\varepsilon,
\end{align*}
where we have used the Lebesgue dominated convergence
theorem in the last step. Then the proof is finished. \medskip
\end{proof}

\section{Existence of non-trivial solutions\ for semilinear bi-harmonic
equations}

In this section, we consider the nontrivial solutions of semilinear bi-harmonic equation
(\ref{bi-harmonic1}). We will employ the compactness result obtained in Theorem \ref{thm1} and the principle of symmetric criticality to
prove that equation (\ref{bi-harmonic1}) has a nontrivial radial solution under the assumption that the nonlinearity $f(t)$ satisfies mild conditions
(i), (ii) and (\ref{exponential critical}).

The natural functional associated to a variational approach to problem
(\ref{bi-harmonic1}) is
\[
I_{\gamma}(u)=\frac{1}{2}\left(  \left\Vert \Delta u\right\Vert _{2}%
^{2}+\gamma\left\Vert u\right\Vert _{2}^{2}\right)  -\int_{\mathbb{R}^{4}%
}F(u)dx,\ \forall\ u\in H^{2}(\mathbb{R}^{4}).
\]
Obviously, $I_{\gamma}\in C^{1}(H^{2}(\mathbb{R}^{4}),\mathbb{R})$ with
\[
I_{\gamma}^{\prime}(u)v=\int_{\mathbb{R}^{4}}(\Delta u\Delta v+\gamma
uv)dx-\int_{\mathbb{R}^{4}}f(u)vdx,\ \ \forall u,v\in H^{2}(\mathbb{R}^{4}).
\]
Our goal is to prove the existence of non-trivial solutions of the equation
(\ref{bi-harmonic1}). According to the principle of symmetric criticality, we
only need to verify that $u$ is a critical point restricted to the space
$H_{r}^{2}(\mathbb{R}^{4})$. Motivated by the Pohozaev identity for equation
(\ref{bi-harmonic1}), we introduce the functional
\[
G_{\gamma}(u)=\gamma\left\Vert u\right\Vert _{2}^{2}-2\int_{\mathbb{R}^{4}%
}F(u)dx
\]
and the constrained minimization problem%

\begin{equation}%
\begin{split}
A_{\gamma}  &  =\inf\left\{  \left.  \frac{1}{2}\Vert\Delta u\Vert_{2}%
^{2}\ \right\vert \ u\in H_{r}^{2}(\mathbb{R}^{4}),\ G_{\gamma}(u)=0\right\}
\\
&  =\inf\left\{  \left.  I_{\gamma}(u)\ \right\vert u\in H_{r}^{2}\left(
\mathbb{R}^{4}\right)  ,\ G_{\gamma}(u)=0\right\}  .
\end{split}
\label{constrained}%
\end{equation}

\vskip0.1cm

Set $\mathcal{P}_{r}=\left\{  u\in H_{r}^{2}(\mathbb{R}^{4}),\ G_{\gamma
}(u)=0\right\}$. Apparently, $\mathcal{P}_{r}$ is not empty. In
fact, let $u_{0}\in H_{r}^{2}(\mathbb{R}^{4})$ be compactly supported and
define
\[
h(s):=G_{\gamma}(su_{0})=\gamma s^{2}\left\Vert u_{0}\right\Vert _{2}%
^{2}-2\int_{\mathbb{R}^{4}}F(su_{0})dx,\forall s>0.
\]
It follows from the fact
\[
\lim_{s\rightarrow0^{+}}\frac{F(s)}{s^{2}}=0,\ \lim_{s\rightarrow+\infty}%
\frac{F(s)}{s^{2}}=\infty
\]
that $h(s)>0$ for $s>0$ small enough and $h(s)<0$ for $s>0$ sufficiently
large. Therefore, there exists $s_{0}>0$ such that $h(s_{0}u_{0})=0$. This gives
$s_{0}u_{0}\in\mathcal{P}_{r}$.

\begin{lemma}
\label{lemm1} There exists a minimizing sequence $\{u_{k}\}_{k}\in
\mathcal{P}_{r}$ satisfying $\left\Vert u_{k}\right\Vert _{2}=1$ for
$A_{\gamma}$.
\end{lemma}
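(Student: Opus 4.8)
The plan is to start from an arbitrary minimizing sequence for $A_{\gamma}$ on $\mathcal{P}_r$ and rescale it so that its $L^2$-norm becomes $1$, while remaining on the constraint set $\mathcal{P}_r$ and keeping the Dirichlet energy $\frac12\|\Delta u\|_2^2$ convergent to $A_\gamma$. Pick any $\{v_k\}_k\subset\mathcal{P}_r$ with $\frac12\|\Delta v_k\|_2^2\to A_\gamma$. The natural scaling to exploit is the dilation $v\mapsto v(\cdot/t)$ in $\mathbb{R}^4$, under which $\|\Delta v(\cdot/t)\|_2^2=t^2\|\Delta v\|_2^2$, $\|v(\cdot/t)\|_2^2=t^4\|v\|_2^2$, and $\int F(v(\cdot/t))\,dx=t^4\int F(v)\,dx$. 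Observe that $G_\gamma$ is \emph{homogeneous of degree $4$} under this dilation: $G_\gamma(v(\cdot/t))=t^4\bigl(\gamma\|v\|_2^2-2\int F(v)\,dx\bigr)=t^4 G_\gamma(v)$. Hence the dilation preserves the constraint $G_\gamma=0$ for every $t>0$, so $v_k(\cdot/t_k)\in\mathcal{P}_r$ for any choice $t_k>0$, and it multiplies the Dirichlet energy only by $t_k^2$. Choosing $t_k=\|v_k\|_2^{-1/2}$ (note $v_k\neq 0$ since $0\notin\mathcal{P}_r$ because $F(s)/s^2\to 0$ as $s\to0$ forces $G_\gamma(u)>0$ for small nonzero $u$) and setting $u_k:=v_k(\cdot/t_k)$, we get $\|u_k\|_2^2=t_k^4\|v_k\|_2^2=1$ and $u_k\in H_r^2(\mathbb{R}^4)$.

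The remaining point is that $\{u_k\}$ is still a minimizing sequence, i.e. $\frac12\|\Delta u_k\|_2^2=t_k^2\cdot\frac12\|\Delta v_k\|_2^2\to A_\gamma$. Since $\frac12\|\Delta v_k\|_2^2\to A_\gamma$, it suffices to show $t_k\to 1$, equivalently $\|v_k\|_2^2\to 1$. This is where a uniform two-sided bound on $\|v_k\|_2^2$ is needed. The upper bound is immediate: because $v_k\in\mathcal{P}_r$ we have $\gamma\|v_k\|_2^2=2\int F(v_k)\,dx$, so it is enough to bound $\int F(v_k)\,dx$, and since $F(s)$ has the critical exponential growth governed by $\alpha_0$ while $\|\Delta v_k\|_2^2$ is bounded (it converges), one applies the boundedness result Theorem \ref{thm1} with $g=F$ and an appropriate $K$ — after first observing $\|\Delta v_k\|_2^2\le 32\pi^2 K$ for $k$ large — to get $\int F(v_k)\,dx\le C\|v_k\|_2^2$; combined with the constraint this yields $\gamma\|v_k\|_2^2\le 2C\|v_k\|_2^2$, which gives nothing directly, so instead one uses the \emph{subcritical} part of the growth (condition \eqref{exponential critical} for $\alpha<\alpha_0$) together with the Adams inequality \eqref{Adams entire space} to absorb: $\int F(v_k)\,dx\le \varepsilon\|v_k\|_2^2 + C_\varepsilon\int(\exp(\alpha|v_k|^2)-1)\,dx$ with $\alpha\|\Delta v_k\|_2^2 < 32\pi^2$ for $k$ large, making the last integral uniformly bounded; then $\gamma\|v_k\|_2^2\le 2\varepsilon\|v_k\|_2^2+C$ gives $\|v_k\|_2^2\le C'$.

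The lower bound $\liminf_k\|v_k\|_2^2>0$ is the main obstacle and requires more care: if $\|v_k\|_2^2\to 0$ along a subsequence, then from $\gamma\|v_k\|_2^2=2\int F(v_k)\,dx$ we get $\int F(v_k)\,dx\to0$, and one must derive a contradiction with $v_k\in\mathcal{P}_r$, $v_k\neq0$. The idea is that near $0$, $F(s)=o(s^2)$, so for $v_k$ small in an appropriate sense the identity $\gamma\|v_k\|_2^2=2\int F(v_k)\,dx$ cannot hold unless $v_k\equiv0$; to make this rigorous one splits $\int F(v_k)\,dx$ over $\{|v_k|\le\delta\}$ and $\{|v_k|>\delta\}$, bounds the first piece by $\varepsilon(\delta)\|v_k\|_2^2$ with $\varepsilon(\delta)\to0$, and controls the second using the radial decay estimate $|v_k(r)|^2\le c r^{-3}\|\nabla v_k\|_2\|v_k\|_2$ (already used in the proof of Theorems \ref{thm1}–\ref{thm2}) — which, since $\|v_k\|_2\to0$ and $\|\Delta v_k\|_2$ is bounded (hence $\|\nabla v_k\|_2$ is bounded by interpolation), shows $v_k\to0$ uniformly on $\mathbb{R}^4$, so eventually $\{|v_k|>\delta\}=\emptyset$; then $\gamma\|v_k\|_2^2\le 2\varepsilon(\delta)\|v_k\|_2^2$ with $2\varepsilon(\delta)<\gamma$ forces $v_k\equiv0$, a contradiction. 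With $0<c_1\le\|v_k\|_2^2\le c_2<\infty$ in hand, passing to a subsequence $\|v_k\|_2^2\to\ell\in(0,\infty)$; but then $u_k':=v_k(\cdot/\|v_k\|_2^{-1/2})$ has $\frac12\|\Delta u_k'\|_2^2=\|v_k\|_2\cdot\frac12\|\Delta v_k\|_2^2\to\sqrt{\ell}\,A_\gamma$, and since $u_k'\in\mathcal{P}_r$ with $\|u_k'\|_2=1$, minimality forces $\sqrt{\ell}\,A_\gamma\ge A_\gamma$, i.e. $\ell\ge1$; a symmetric argument applied to the original $\{v_k\}$ (which also lies in $\mathcal{P}_r$) combined with the definition of $A_\gamma$ forces $A_\gamma\le\frac12\|\Delta v_k\|_2^2$, and one checks $\ell\le1$ similarly by optimality of the rescaled sequence, giving $\ell=1$, hence $t_k\to1$ and $\frac12\|\Delta u_k\|_2^2\to A_\gamma$ as required. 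This completes the construction.
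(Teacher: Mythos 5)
Your starting point — rescale an arbitrary minimizing sequence by a dilation to normalize the $L^2$ norm while staying on $\mathcal{P}_r$ — is exactly the right one and is what the paper does, but you miscomputed the scaling of the second-order Dirichlet energy, and that single error sends the rest of the argument down a long, unnecessary, and ultimately unsound detour.

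In $\mathbb{R}^4$, writing $v_t(x):=v(x/t)$, one has $\Delta_x v_t(x)=t^{-2}(\Delta v)(x/t)$, and hence
\[
\|\Delta v_t\|_2^2 = t^{-4}\int_{\mathbb{R}^4}|(\Delta v)(x/t)|^2\,dx = t^{-4}\cdot t^{4}\,\|\Delta v\|_2^2 = \|\Delta v\|_2^2 ,
\]
so the quantity $\|\Delta u\|_2^2$ is \emph{dilation-invariant} in dimension four; the factor $t^2$ you wrote down is the scaling of the first-order energy $\|\nabla v_t\|_2^2$, not of $\|\Delta v_t\|_2^2$. Your other two scaling identities, $\|v_t\|_2^2=t^4\|v\|_2^2$ and $\int F(v_t)\,dx=t^4\int F(v)\,dx$, are correct and do show $G_\gamma(v_t)=t^4G_\gamma(v)$, so dilation preserves the constraint. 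Once the Dirichlet energy is seen to be invariant, the proof closes in one line: take any minimizing $\{v_k\}\subset\mathcal{P}_r$, put $t_k=\|v_k\|_2^{-1/2}$ and $u_k=v_k(\cdot/t_k)$; then $u_k\in\mathcal{P}_r$, $\|u_k\|_2^2=t_k^4\|v_k\|_2^2=1$, and $\tfrac12\|\Delta u_k\|_2^2=\tfrac12\|\Delta v_k\|_2^2\to A_\gamma$ with no further work. This is precisely the paper's proof (the paper's $\tilde v_k=u_k(\|u_k\|_2^{1/2}x)$ is the same rescaling).

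Because you believed the Dirichlet energy picked up a factor $t_k^2$, you were forced to try to prove $t_k\to1$, i.e.\ that an \emph{arbitrary} minimizing sequence satisfies $\|v_k\|_2^2\to1$. That claim is not only unnecessary but false: since both the constraint and the functional $\tfrac12\|\Delta\cdot\|_2^2$ are dilation-invariant, one may replace $v_k$ by $v_k(\cdot/s_k)$ for any $s_k>0$ and still have a minimizing sequence in $\mathcal{P}_r$, with $\|v_k(\cdot/s_k)\|_2^2=s_k^4\|v_k\|_2^2$ arbitrary. Your concluding paragraph also has an internal inconsistency: with $t_k=\|v_k\|_2^{-1/2}$ and your claimed factor $t^2$ one gets $\tfrac12\|\Delta u_k\|_2^2=\|v_k\|_2^{-1}\cdot\tfrac12\|\Delta v_k\|_2^2$, not $\|v_k\|_2\cdot\tfrac12\|\Delta v_k\|_2^2$ as written, and the ``$\ell\le1$ similarly'' step is never actually supplied. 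Correcting the scaling exponent removes the entire second half of your argument and recovers the paper's one-line proof.
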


\begin{proof}
Assume that $\{u_{k}\}_{k}$ is a minimizing sequence for $A_{\gamma}$, that
is, $u_{k}\in\mathcal{P}_{r}$ satisfying
\[
\lim\limits_{k\rightarrow\infty}\frac{1}{2}\left\Vert \Delta u_{k}\right\Vert
_{2}^{2}=A_{\gamma}.
\]
Let $\tilde{v}_{k}=u_{k}(\left\Vert u_{k}\right\Vert _{2}^{1/2}x)$, simple
computations lead to $\left\Vert \tilde{v}_{k}\right\Vert _{2}=1$,
$\tilde{v}_{k}\in\mathcal{P}_{r}$ and $\left\Vert \Delta v_{k}\right\Vert
_{2}=\left\Vert \Delta\tilde{v}_{k}\right\Vert _{2}$. This accomplishes the
proof of Lemma \ref{lemm1}.
\end{proof}
\vskip0.1cm

If the infimum $A_{\gamma}$ is attained, then the minimizer $u\in H_{r}%
^{2}(\mathbb{R}^{4})$ under a suitable change of scale is a ground state
solution of (\ref{bi-harmonic1}) constrained to the space $H_{r}%
^{2}(\mathbb{R}^{4})$. In fact, if $u$ is a minimizer for $A_{\gamma}$, then
there exists a Lagrange multiplier $\theta\in\mathbb{R}$ such that
\[
\Delta^{2}u+\gamma u-f(u)=\theta(2\gamma u-2f(u))\ \text{in}\ \mathbb{R}^{4}%
\]
namely,
\[
\Delta^{2}u=(2\theta-1)(\gamma u-f(u))\ \text{in}\ \mathbb{R}^{4}.
\]
Recalling that $u\in\mathcal{P}_{r}$, we have
\[%
\begin{split}
\int_{\mathbb{R}^{4}}(\gamma u-f(u))udx  &  =\gamma\left\Vert u\right\Vert
_{2}^{2}-2\int_{\mathbb{R}^{4}}F(u)dx+\int_{\mathbb{R}^{4}}(2F(u)-uf(u))dx\\
&  =-\int_{\mathbb{R}^{4}}\left(  uf(u)-2F(u)\right)  dx<0,
\end{split}
\]
as a consequence of (i). Moreover,
\[
\int_{\mathbb{R}^{4}}\Delta^{2}u\cdot udx=\int_{\mathbb{R}^{4}}|\Delta
u|^{2}dx>0,
\]
hence $2\theta-1<0$. Therefore
\begin{equation}
\tilde{u}(x)=u\left(  \frac{x}{(1-2\theta)^{\frac{1}{2}}}\right)
\ \text{for}\ \text{a.e.}\ x\in\mathbb{R}^{4} \label{rescal}%
\end{equation}
is a non-trivial solution of (\ref{bi-harmonic1}) constrained to the space
$H_{r}^{2}(\mathbb{R}^{4})$. According to the principle of symmetric
criticality, then $u$ is a non-trivial solution of (\ref{bi-harmonic1}).
\medskip

Now, we establish an relation between the attainability of $A_{\gamma}$ and
the Adams' inequality with the exact growth (\ref{int1}). For this purpose, we
introduce the Adams ratio
\[
C_{A}^{L}=\sup\{\frac{2}{\left\Vert u\right\Vert _{2}^{2}}\int_{\mathbb{R}%
^{4}}F(u)|\ u\in H_{r}^{2}(\mathbb{R}^{4}),\left\Vert \Delta u\right\Vert
_{2}^{2}\leq L\}.
\]
The Adams threshold $R(F)$ is given by
\[
R(F)=\sup\{L>0\ |\ C_{A}^{L}<+\infty\}.
\]
We denote by $C_{A}^{\ast}=C_{A}^{R(F)}$ the ratio at the threshold\ $R(F)$.
By the growth condition (\ref{exponential critical})\ and (ii) of $f\left(
s\right)  $, we obtain
\[
\underset{t\rightarrow+\infty}{\lim}\frac{t^{2}F\left(  t\right)  }%
{\exp\left(  \alpha t^{2}\right)  }=%
\genfrac{\{}{.}{0pt}{}{0\text{, if }\alpha>\alpha_{0},}{+\infty\text{, if
}\alpha<\alpha_{0},}%
\]
and
\[
\underset{t\rightarrow0^{+}}{\lim}\frac{F\left(  t\right)  }{t^{2}}=0\text{.}%
\]
Hence, thanks to Theorem \ref{thm1}, we derive $R(F)=32\pi^{2}/\alpha_{0}$.

\begin{lemma}
\label{lemm2} If $A_{\gamma}<R(F)/2$, then $A_{\gamma}$ can be attained and
$A_{\gamma}=I_{\gamma}(u)$, where $u\in H_{r}^{2}(\mathbb{R}^{4})$ under a
suitable change of scale is a nontrivial solution of equation (\ref{bi-harmonic1}%
) through the principle of symmetric criticality.
\end{lemma}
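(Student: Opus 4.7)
The plan is to produce a minimizer for $A_\gamma$ by the direct method: extract a weak limit of a radial minimizing sequence, use Theorem \ref{thm2} to pass to the limit in the nonlinear term, show the limit lies on the Pohozaev manifold $\mathcal{P}_r$ by ruling out strict negativity of $G_\gamma(u)$ via a scalar rescaling, and then invoke the Lagrange multiplier argument already sketched in the text to convert the minimizer into a nontrivial radial solution.

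I start from the minimizing sequence $\{u_k\}\subset\mathcal{P}_r$ with $\|u_k\|_2=1$ given by Lemma \ref{lemm1}. Since $\tfrac12\|\Delta u_k\|_2^2\to A_\gamma$, the sequence is bounded in $H^2_r(\mathbb{R}^4)$, so along a subsequence $u_k\rightharpoonup u$ weakly in $H^2_r(\mathbb{R}^4)$. The key step uses the strict inequality $A_\gamma<R(F)/2=16\pi^2/\alpha_0$: I pick $K$ with $2A_\gamma/(32\pi^2)<K<1/\alpha_0$, so that $\|\Delta u_k\|_2^2\le 32\pi^2 K$ for all large $k$. The growth information already derived in the preamble to this lemma, namely $t^2 F(t)/\exp(\alpha t^2)\to 0$ for every $\alpha>\alpha_0$ together with $F(t)/t^2\to 0$ as $t\to 0$, is exactly condition (3) of Theorem \ref{thm2} for $g=F$ and this choice of $K$. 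Applying the compactness theorem yields $\int F(u_k)\to\int F(u)$. Since $u_k\in\mathcal{P}_r$ and $\|u_k\|_2=1$ force $\int F(u_k)=\gamma/2$, I obtain $\int F(u)=\gamma/2>0$, so in particular $u\not\equiv 0$.

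Next I would show $u\in\mathcal{P}_r$. Weak lower semicontinuity of the $L^2$ norm gives $\|u\|_2^2\le 1$, and hence $G_\gamma(u)=\gamma(\|u\|_2^2-1)\le 0$. Suppose toward contradiction that $G_\gamma(u)<0$. The scalar function $\phi(t):=G_\gamma(tu)$ satisfies $\phi(t)>0$ for small $t>0$ (by the same argument used earlier to show $\mathcal{P}_r$ is nonempty, since $F(s)/s^2\to 0$ as $s\to 0$) and $\phi(1)<0$, so continuity provides $t_0\in(0,1)$ with $t_0 u\in\mathcal{P}_r$. Then
\[
\tfrac12\|\Delta(t_0u)\|_2^2=\tfrac{t_0^2}{2}\|\Delta u\|_2^2\le \tfrac{t_0^2}{2}\liminf_k\|\Delta u_k\|_2^2=t_0^2A_\gamma<A_\gamma,
\]
contradicting the definition of $A_\gamma$. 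Hence $G_\gamma(u)=0$ and $u\in\mathcal{P}_r$. Combining $\tfrac12\|\Delta u\|_2^2\ge A_\gamma$ (infimum) with $\tfrac12\|\Delta u\|_2^2\le A_\gamma$ (weak l.s.c.) shows the infimum is attained at $u$. The Lagrange multiplier calculation already displayed in the text gives $2\theta-1<0$, so the rescaled function in (\ref{rescal}) is a nontrivial radial weak solution of (\ref{bi-harmonic1}), and the principle of symmetric criticality promotes this to a critical point on all of $H^2(\mathbb{R}^4)$.

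I expect the main obstacle to be the compactness step. The strict inequality $A_\gamma<R(F)/2$ is precisely what allows an admissible $K<1/\alpha_0$ in Theorem \ref{thm2} and thereby rules out concentration of the nonlinear term along the minimizing sequence; at the threshold the convergence $\int F(u_k)\to\int F(u)$ could fail. The remaining delicate point is the rescaling $t_0u$, which is standard once the manifold structure is pinned down but relies on carefully using the small-$t$ behavior $F(t)/t^2\to 0$ to get $\phi(t)>0$ near zero uniformly in the profile of $u$.
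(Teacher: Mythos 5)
Your proof is correct and follows essentially the same route as the paper's: extract a weakly convergent radial minimizing sequence, use $A_\gamma<R(F)/2$ to choose an admissible $K\in\bigl(2A_\gamma/(32\pi^2),\,1/\alpha_0\bigr)$ so that Theorem \ref{thm2} applies to $g=F$, conclude $\int F(u_k)\to\int F(u)=\gamma/2$ (hence $u\neq0$), show $G_\gamma(u)\leq0$ by weak lower semicontinuity, rule out strict negativity via the scalar rescaling $t_0u$, and finish with the Lagrange multiplier and symmetric criticality argument already displayed before the lemma. The only cosmetic difference from the paper is that you encode $G_\gamma(u)\leq0$ directly as $\gamma(\|u\|_2^2-1)\leq0$ using $\|u_k\|_2=1$, whereas the paper writes it as $\liminf G_\gamma(u_k)=0$, and you fold the positivity $A_\gamma>0$ (proved separately in the paper) into the observation that the minimizer is nontrivial; both are sound.
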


\begin{proof}
Let $\{u_{k}\}_{k}$ be a radial minimizing sequence for $A_{\gamma}$, that is
$u_{k}\in\mathcal{P}_{r}$ satisfying
\[
\lim\limits_{k\rightarrow\infty}\frac{1}{2}\left\Vert \Delta u_{k}\right\Vert
_{2}^{2}=A_{\gamma}\ \ \text{and}\ \ \left\Vert u_{k}\right\Vert _{2}^{2}=1.
\]
We also assume that $u_{k}\rightharpoonup u$ in $H^{2}(\mathbb{R}^{4})$. We
first prove that $A_{\gamma}>0$. We argue this by contradiction. We assume that
$A_{\gamma}=0$, nemely $\lim\limits_{k\rightarrow\infty}\left\Vert \Delta
u_{k}\right\Vert _{2}^{2}=0$, which implies that $u=0$. Regarding
\[
\ \lim\limits_{t\rightarrow+\infty}|t|^{2}\exp(-\alpha|t|^{2}%
)F(t)=0\ \text{for\ any }\alpha>\alpha_{0},\ \ \lim\limits_{t\rightarrow0}%
|t|^{-2}F(t)=0,
\]
we derive that
\[
\lim\limits_{k\rightarrow\infty}\int_{\mathbb{R}^{4}}F(u_{k})dx=\int
_{\mathbb{R}^{4}}F(u)dx
\]
through Theorem \ref{thm2}.
On the other hand, since $u_{k}\in\mathcal{P}_{r}$ and $\left\Vert
u_{k}\right\Vert _{2}^{2}=1$, then
\[
0<\gamma \lim_{k\rightarrow\infty}\left\Vert u_{k}\right\Vert _{2}^{2}=2\lim\limits_{k\rightarrow\infty
}\int_{\mathbb{R}^{4}}F(u_{k})dx=2\int_{\mathbb{R}^{4}}F(u)dx,
\]
which contradicts $u=0$. This proves that $A_{\gamma}>0$. \medskip

Now are in position to prove that if $A_{\gamma}<R(F)/2$, then $A_{\gamma}$
could be attained. Under the assumption of Lemma \ref{lemm2}, we have
$$\lim_{k\rightarrow\infty}\left\Vert \Delta u_{k}\right\Vert _{2}%
^{2}=2A_{\gamma}<R(F)=32\pi^{2}/\alpha_{0}.$$ Picking up $\frac{1}{K}%
>\alpha_{0}$ satisfying $\lim_{k\rightarrow\infty}\left\Vert \Delta
u_{k}\right\Vert _{2}^{2}\leq32\pi^{2}K$, then we derive that
\[
\ \lim\limits_{t\rightarrow+\infty}|t|^{2}\exp(-\frac{1}{K}|t|^{2}%
)F(t)=0,\ \lim\limits_{t\rightarrow0}|t|^{-2}F(t)=0.
\]
It follows from Theorem \ref{thm2} that
\begin{equation}\label{ad1}
\lim\limits_{k\rightarrow\infty}\int_{\mathbb{R}^{4}}F(u_{k})dx=\int
_{\mathbb{R}^{4}}F(u)dx.
\end{equation}
Consequently,
\[
\gamma=\lim\limits_{k\rightarrow\infty}\gamma\left\Vert u_{k}\right\Vert
_{2}^{2}=2\lim\limits_{k\rightarrow\infty}\int_{\mathbb{R}^{4}}F(u_{k}%
)dx=2\int_{\mathbb{R}^{4}}F(u)dx
\]
and
\[
\frac{1}{2}\left\Vert \Delta u\right\Vert _{2}^{2}\leq\lim
\limits_{k\rightarrow\infty}\frac{1}{2}\left\Vert \Delta u_{k}\right\Vert
_{2}^{2}=A_{\gamma}.
\]
In order to show $u$ is minimizer for $A_{\gamma}$, what left is to show that
$G_{\gamma}(u)=0$. Set
\[
h(t)=G_{\gamma}(tu)=\gamma\left\Vert tu\right\Vert _{2}^{2}-\int
_{\mathbb{R}^{4}}F(tu)dx.
\]
Obviously, in view of \eqref{ad1}, we have
\begin{align*}
G_{\gamma}(u)  &  =\gamma\left\Vert u\right\Vert _{2}^{2}-2\int_{\mathbb{R}%
^{4}}F(u)dx\\
&  \leq\lim\limits_{k\rightarrow\infty}\left(  \gamma\left\Vert u_{k}%
\right\Vert _{2}^{2}-\int_{\mathbb{R}^{4}}F(u_{k})dx\right)  =\lim
\limits_{k\rightarrow\infty}G_{\gamma}(u_{k})=0.
\end{align*}
This implies  $h(1)\leq0$. From $\lim_{t\rightarrow0^{+}}\frac{F(t)}{t^{2}}=0$,
one can deduce that $h(t)>0$ for $t>0$ small enough. Consequently, there exists
$s_{0}\in(0,1]$ such that $G_{\gamma}(s_{0}u)=0$. Then it follows that
\[
A_{\gamma}\leq\frac{1}{2}\left\Vert \Delta s_{0}u\right\Vert _{2}^{2}=\frac
{1}{2}s_{0}^{2}\left\Vert \Delta u\right\Vert _{2}^{2}\leq s_{0}^{2}A_{\gamma
},
\]
which proves that $s_{0}=1$ and $\frac{1}{2}\left\Vert \Delta u\right\Vert
_{2}^{2}=A_{\gamma}$. Then we accomplish the proof of Lemma \ref{lemm2}.
\end{proof}

Next, we show

\begin{lemma}
\label{lemm5} The constrained minimization problem $A_{\gamma}$ associated to
the functional $I_{\gamma}$ satisfies
\[
A_{\gamma}<\frac{1}{2}R(F)
\]
if and only if
\[
\gamma<C_{A}^{\ast}.
\]

\end{lemma}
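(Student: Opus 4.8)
The plan is to prove the two implications separately, exploiting the scaling structure of the problem and the definition of the Adams ratio $C_A^\ast = C_A^{R(F)}$ with $R(F) = 32\pi^2/\alpha_0$.

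First I would prove that $\gamma < C_A^\ast$ implies $A_\gamma < \tfrac12 R(F)$. By definition of $C_A^\ast$ as a supremum, if $\gamma < C_A^\ast$ there exists $w \in H_r^2(\mathbb{R}^4)$ with $\|\Delta w\|_2^2 \le R(F)$ and $\frac{2}{\|w\|_2^2}\int_{\mathbb{R}^4} F(w)\,dx > \gamma$, i.e.\ $G_\gamma(w) = \gamma\|w\|_2^2 - 2\int F(w)\,dx < 0$. Now I would run the same path argument used to show $\mathcal{P}_r \ne \emptyset$: consider $h(s) = G_\gamma(sw)$ for $s>0$; since $\lim_{s\to 0^+} F(s)/s^2 = 0$ we have $h(s) > 0$ for $s$ small, while $h(1) < 0$, so there is $s_0 \in (0,1)$ with $G_\gamma(s_0 w) = 0$, hence $s_0 w \in \mathcal{P}_r$. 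Then
\[
A_\gamma \le \frac12 \|\Delta(s_0 w)\|_2^2 = \frac12 s_0^2 \|\Delta w\|_2^2 < \frac12 \|\Delta w\|_2^2 \le \frac12 R(F),
\]
where strictness comes from $s_0 < 1$ (and $\|\Delta w\|_2 > 0$, since $G_\gamma(w)<0$ forces $w \not\equiv 0$). This gives the desired strict inequality.

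Conversely, I would prove $A_\gamma < \tfrac12 R(F)$ implies $\gamma < C_A^\ast$. By Lemma \ref{lemm2}, the hypothesis $A_\gamma < \tfrac12 R(F)$ guarantees that $A_\gamma$ is attained at some $u \in \mathcal{P}_r$ with $\|\Delta u\|_2^2 = 2A_\gamma < R(F)$ and (after the normalization of Lemma \ref{lemm1}, or directly from the computation in Lemma \ref{lemm2}) $\gamma\|u\|_2^2 = 2\int_{\mathbb{R}^4} F(u)\,dx$, i.e.\ $\gamma = \frac{2}{\|u\|_2^2}\int F(u)\,dx$. Since $u$ is admissible in the definition of $C_A^L$ with $L = \|\Delta u\|_2^2 < R(F)$, and since $C_A^L \le C_A^{R(F)} = C_A^\ast$ by monotonicity of $C_A^L$ in $L$, we get $\gamma \le C_A^\ast$. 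To upgrade this to strict inequality I would note that $\|\Delta u\|_2^2 < R(F)$ strictly, so one can choose $L'$ with $\|\Delta u\|_2^2 < L' < R(F)$ and use a small rescaling/dilation $u_t(x) = u(x/t)$ with $t>1$ slightly: this increases $\int F(u_t) = t^4 \int F(u)$ and $\|u_t\|_2^2 = t^4\|u\|_2^2$ keeping the ratio fixed, while $\|\Delta u_t\|_2^2 = t^2\|\Delta u\|_2^2$ stays below $R(F)$; more usefully, I would instead perturb $u$ within $H_r^2$ to a nearby $w$ with $\|\Delta w\|_2^2 < R(F)$ and $\frac{2}{\|w\|_2^2}\int F(w) > \gamma$, which is possible because $u$ lies strictly inside the sublevel $\{\|\Delta\cdot\|_2^2 < R(F)\}$ and the ratio functional is continuous and not locally constant at $u$ (its value $\gamma$ is below the supremum defining $C_A^{L'}$ unless $C_A^{L'} = \gamma$, and one can always nudge the ratio up slightly using that $F$ grows superquadratically). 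Hence $\gamma < C_A^{L'} \le C_A^\ast$.

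The main obstacle is the strictness in the converse direction: getting $\gamma \le C_A^\ast$ is immediate from monotonicity, but promoting it to $\gamma < C_A^\ast$ requires using that the minimizer $u$ sits in the \emph{open} sublevel set $\|\Delta u\|_2^2 < R(F)$ together with the superquadratic growth of $F$, so that the constrained ratio can be strictly increased by a small admissible perturbation. A clean way to handle this is a dilation argument: replacing $u$ by $u(\cdot/t)$ for $t$ slightly larger than $1$ multiplies both $\int F(u)\,dx$ and $\|u\|_2^2$ by $t^4$ (leaving the ratio equal to $\gamma$) while multiplying $\|\Delta u\|_2^2$ by $t^2 < R(F)/\|\Delta u\|_2^2$ for small $t-1$; this alone does not raise the ratio, so one must instead combine the dilation with the observation that $C_A^{L}$ is \emph{strictly} increasing as long as it is finite — which follows again from superquadraticity of $F$ — to conclude $\gamma = C_A^{\|\Delta u\|_2^2} < C_A^{R(F)} = C_A^\ast$. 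I would present this last point carefully, as it is the only place where more than a routine computation is needed.
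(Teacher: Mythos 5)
Your first implication ($\gamma<C_A^\ast\Rightarrow A_\gamma<\tfrac12 R(F)$) is essentially identical to the paper's argument: pick $w$ with $\Vert\Delta w\Vert_2^2\le R(F)$ and $G_\gamma(w)<0$, locate $s_0\in(0,1)$ on the ray $sw$ with $G_\gamma(s_0w)=0$, and use $s_0<1$ to get strictness. That part is fine.

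The converse is where there is a genuine gap. You correctly observe that the naive step only gives $\gamma\le C_A^{\Vert\Delta u\Vert_2^2}\le C_A^{R(F)}=C_A^\ast$ (you actually wrote $\gamma = C_A^{\Vert\Delta u\Vert_2^2}$, which is not established and should be $\le$), and you correctly diagnose that the only nontrivial content of the lemma is upgrading this to strict inequality. But then you try several routes, explicitly discard the ones that fail, and end by \emph{asserting} that ``$C_A^L$ is strictly increasing \dots which follows again from superquadraticity of $F$'' without carrying out the verification. That assertion is precisely the missing step, and it is exactly where all the work lies. The paper avoids proving strict monotonicity of $L\mapsto C_A^L$ as an abstract statement; instead it works with the concrete minimizer $u$ of $A_\gamma$ (attained by Lemma \ref{lemm2}) and the one-variable function
\[
g(s)=\frac{2}{s^2\Vert u\Vert_2^2}\int_{\mathbb{R}^4}F(su)\,dx,
\]
which satisfies $g(1)=\gamma$ and is strictly increasing because the Ambrosetti--Rabinowitz condition (i) gives $s\,g'(s)\,\Vert u\Vert_2^2/2 = s^{-2}\!\int\big(suf(su)-2F(su)\big)\,dx>0$ (using $\mu>2$). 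Then one scales $u$ \emph{in amplitude}, not by dilation: take $v=\frac{R(F)^{1/2}}{\Vert\Delta u\Vert_2}u$, so that $\Vert\Delta v\Vert_2^2=R(F)$ exactly and $v$ is admissible for the supremum defining $C_A^\ast$; this gives
\[
C_A^\ast\ge\frac{2}{\Vert v\Vert_2^2}\int_{\mathbb{R}^4}F(v)\,dx
=g\!\left(\frac{R(F)^{1/2}}{\Vert\Delta u\Vert_2}\right)>g(1)=\gamma,
\]
with strictness coming from $R(F)^{1/2}/\Vert\Delta u\Vert_2>1$. This is the same idea you gestured at (amplitude scaling plus superquadraticity), but you must actually run it rather than invoke it. You would also need to treat the degenerate case $C_A^\ast=+\infty$ separately in this direction (trivial, since then $\gamma<C_A^\ast$ automatically), because a statement like $C_A^{\Vert\Delta u\Vert_2^2}<C_A^{R(F)}$ is meaningless if both sides are infinite.
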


\begin{proof}
We first prove that if $A_{\gamma}<R(F)/2$, then $\gamma<C_{A}^{\ast}$.
Obviously, if the $C_{A}^{\ast}=+\infty$, then $\gamma<C_{A}^{\ast}$ and the
proof is complete. Therefore, without loss of generality, we may assume that
$C_{A}^{\ast}<+\infty$. According to Lemma \ref{lemm2}, we see that
$A_{\gamma}$ could be achieved by a radial function $u\in\mathcal{P}_{r}$.
Then according to the definition of the $A_{\gamma}$, we have $\left\Vert
\Delta u\right\Vert _{2}^{2}<32\pi^{2}/\alpha_{0}$ and $\gamma\left\Vert
u\right\Vert _{2}^{2}=2\int_{\mathbb{R}^{4}}F(u)dx$. Define
\[
g(s)=\frac{2}{s^{2}\left\Vert u\right\Vert _{2}^{2}}\int_{\mathbb{R}^{4}%
}F(su)dx,
\]
then $g(1)=\gamma$. Since $F$ satisfies the condition (i), then it is easy to
see that $g(s)$ is monotone increasing. If we set $v=\frac{R(F)^{1/2}%
}{\left\Vert \Delta u\right\Vert _{2}}u$, then $\Vert\Delta v\Vert_{2}%
^{2}=R(F)$ and \
\[
C_{A}^{\ast}\geq\frac{2}{\Vert v\Vert_{2}^{2}}\int_{\mathbb{R}^{4}%
}F(v)dx=g(\frac{R(F)^{1/2}}{\Vert\Delta u\Vert_{2}})>g(1)=\gamma.
\]

Next, it remains to verify  that if $\gamma<C_{A}^{\ast}$, then $A_{\gamma
}<R(F)/2$. We distinguish between the case $C_{A}^{\ast}<+\infty$ and
$C_{A}^{\ast}=+\infty$.
\vskip0.1cm

In the case $C_{A}^{\ast}<+\infty$, since
$\gamma<C_{A}^{\ast}$, then $\gamma<C_{A}^{\ast}-\varepsilon_{0}$ for some
$\varepsilon_{0}>0$. It follows from the definition of  $C_{A}^{\ast}$ that
 there exists some $u_{0}\in H_{r}^{2}(\mathbb{R}^{4})$ with $\Vert\Delta
u_{0}\Vert_{2}^{2}\leq R(F)$ satisfying
\[
C_{A}^{\ast}-\varepsilon_{0}<\frac{2}{\Vert u_{0}\Vert_{2}^{2}}\int
_{\mathbb{R}^{4}}F(u_{0})dx.
\]
Consequently,
\[
\gamma\Vert u_{0}\Vert_{2}^{2}<2\int_{\mathbb{R}^{4}}F(u_{0})dx,
\]
namely $G_{\gamma}(u)<0$. Let $h(s)=G_{\gamma}(su_{0})$ for $s>0$. Since
$h(1)<0$ and $h(s)>0$ for $s>0$ small enough, then there exists $s_{0}%
\in(0,1)$ satisfying $h(s_{0}u_{0})=0$. Therefore, we have  $s_{0}u_{0}\in
\mathcal{P}_{r}$ and
\[
A_{\gamma}\leq\frac{1}{2}\Vert\Delta(s_{0}u_{0})\Vert_{2}^{2}=\frac{1}{2}%
s_{0}^{2}\Vert\Delta u_{0}\Vert_{2}^{2}<\frac{1}{2}R(F).
\]
\vskip0.1cm

In the case $C_{A}^{\ast}=+\infty$, for any $\gamma>0$, there exists $u_{0}\in
H_{r}^{2}(\mathbb{R}^{4})$ with $\Vert\Delta u_{0}\Vert_{2}^{2}\leq R(F)$
satisfying
\[
\gamma\Vert u_{0}\Vert_{2}^{2}<2\int_{\mathbb{R}^{4}}F(u_{0})dx.
\]
Hence we can repeat the same arguments as  case $C_{A}^{\ast}<+\infty$ to get the conclusion.
\end{proof}

\section{Existence of ground state solutions for bi-harmonic equation with
the constant potential}

In this section, we will employ the Pohozaev manifold and Fourier rearrangement arguments to study the ground-states of the following semilinear bi-harmonic equation.
\begin{equation}
(-\Delta)^{2}u+\gamma u=\lambda u\exp(2|u|^{2})\ \text{in}~\mathbb{R}%
^{4},\ \ \label{7.1}%
\end{equation}
where $\lambda$ is strictly smaller than the first eigenvalue of operator
$\left(  -\Delta\right)  ^{2}+\gamma I$ in $\mathbb{R}^{4}$, namely
\[
\lambda<\inf_{u\in H^{2}(\mathbb{R}^{4})}\frac{\Vert\Delta u\Vert_{2}%
^{2}+\gamma\Vert u\Vert_{2}^{2}}{\Vert u\Vert_{2}^{2}}=\gamma.
\]

The natural functional associated to a variational approach to problem
(\ref{7.1}) is
\[
I_{\lambda}(u)=\frac{1}{2}\left(  \Vert\Delta u\Vert_{2}^{2}+\gamma\Vert
u\Vert_{2}^{2}\right)  -\frac{\lambda}{4}\int_{\mathbb{R}^{4}}\left(
\exp(2u^{2})-1\right)  dx,\ \forall u\in H^{2}(\mathbb{R}^{4}).
\]
It is easy to obtain that $I_{\lambda}\in C^{1}(H^{2}(\mathbb{R}^{4}),\mathbb{R})$ with
\[
I_{\lambda}^{\prime}(u)v=\int_{\mathbb{R}^{4}}(\Delta u\Delta v+\gamma
uv)dx-\int_{\mathbb{R}^{4}}\lambda u\exp(2u^{2})vdx,\ \forall u,v\in
H^{2}(\mathbb{R}^{4}).
\]
We will prove that equation (\ref{7.1}) has
a radial ground-state solution for any $0<\lambda<\gamma$.
\vskip0.1cm

We recall that a solution $u$ of (\ref{7.1})
is called a ground state if $I_{\lambda}(u)=m_{\lambda}$, where
\[
m_{\lambda}=\inf\{I_{\lambda}(u)\ |\ u\ \mathrm{is\ a\ weak\ solution\ of}%
\ \text{(\ref{7.1})}\}.
\]
Similar to the proof of Theorem \ref{thm3}, we introduce the Pohozaev
functional
\[
G_{\lambda}(u)=c\Vert u\Vert_{2}^{2}-\frac{1}{2}\int_{\mathbb{R}^{4}}%
\lambda\left(  \exp(2|u|^{2})-1\right)  )dx=(\gamma-\lambda)\Vert u\Vert
_{2}^{2}-\int_{\mathbb{R}^{4}}g_{\lambda}(u)dx,
\]
where $g_{\lambda}(t)=\frac{\lambda}{2}\left(  \exp(2t^{2})-1-2t^{2}\right)$, and the constrained minimization problem
\[%
\begin{split}
A_{\lambda} &  =\inf\left\{  \frac{1}{2}\Vert\Delta u\Vert_{2}^{2}\ |\ u\in
H^{2}(\mathbb{R}^{4}),\ G_{\lambda}(u)=0\right\}  \\
&  =\inf\left\{  I_{\lambda}(u)\ |\ u\in H^{2}(\mathbb{R}^{4}),\ G_{\lambda
}(u)=0\right\}  \leq m_{\lambda},
\end{split}
\]
 Set $\mathcal{P}=\left\{  \ u\in H^{2}(\mathbb{R}^{4}),\ G_{\lambda
}(u)=0\right\}  $, obviously $\mathcal{P}$ is not empty. Next, we will adapt the Fourier rearrangement method to show that there exists a radially minimizing sequence for $A_{\lambda}$.
Such a Fourier rearrangement argument has also been used recently by Chen, Lu and Zhang in \cite{ChenLuZhang}
to establish the existence of extremals for the subcritical Adams inequalities on the entire space.

\begin{lemma}
\label{lem1} There exists a radially minimizing sequence $\{u_{k}\}_{k}$
satisfying $\|u_{k}\|_{2}^{2}=1$ for $A_{\lambda}$.
\end{lemma}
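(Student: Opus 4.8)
The plan is to start from an arbitrary minimizing sequence for $A_{\lambda}$, apply the Fourier rearrangement of Lenzmann and Sok to make it radial, rescale to normalize the $L^2$-norm, and verify that the rearranged and rescaled sequence still lies on $\mathcal{P}$ with unchanged (or smaller) value of $\tfrac12\|\Delta u\|_2^2$. Concretely, let $\{v_k\}_k \subset \mathcal{P}$ be any minimizing sequence, so $\tfrac12\|\Delta v_k\|_2^2 \to A_{\lambda}$. Let $v_k^\star$ denote the Fourier rearrangement of $v_k$, i.e. the function whose Fourier transform is the symmetric-decreasing rearrangement of $|\widehat{v_k}|$. The two key properties I would invoke from \cite{Lenzmann} are: (a) $v_k^\star$ is radially symmetric and $\|v_k^\star\|_2 = \|v_k\|_2$, $\|\Delta v_k^\star\|_2 \le \|\Delta v_k\|_2$ (the rearrangement preserves the $L^2$-norm and does not increase the homogeneous $\dot H^2$ seminorm, since the multiplier $|\xi|^2$ is radially increasing); and (b) the pointwise rearrangement inequality of P\'olya--Szeg\H{o} type for nonnegative convex functionals of $|u|$, which gives $\int_{\mathbb{R}^4} g_{\lambda}(v_k^\star)\,dx \ge \int_{\mathbb{R}^4} g_{\lambda}(v_k)\,dx$ because $g_\lambda(t) = \tfrac{\lambda}{2}(\exp(2t^2)-1-2t^2)$ is an even, increasing function of $|t|$ and Fourier rearrangement is known to increase such integrals (this is exactly the property exploited in \cite{ChenLuZhang}).

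Next I would adjust the sign of $G_\lambda$. After rearrangement we have
\[
G_\lambda(v_k^\star) = (\gamma-\lambda)\|v_k^\star\|_2^2 - \int_{\mathbb{R}^4} g_\lambda(v_k^\star)\,dx \le (\gamma-\lambda)\|v_k\|_2^2 - \int_{\mathbb{R}^4} g_\lambda(v_k)\,dx = G_\lambda(v_k) = 0,
\]
so $G_\lambda(v_k^\star) \le 0$. Since $\lim_{t\to 0^+} g_\lambda(t)/t^2 = 0$ and $\gamma - \lambda > 0$, the function $s \mapsto G_\lambda(s v_k^\star)$ is strictly positive for $s>0$ small, so there is $s_k \in (0,1]$ with $G_\lambda(s_k v_k^\star) = 0$; hence $s_k v_k^\star \in \mathcal{P}$, and
\[
A_\lambda \le \tfrac12 \|\Delta(s_k v_k^\star)\|_2^2 = \tfrac12 s_k^2 \|\Delta v_k^\star\|_2^2 \le \tfrac12 \|\Delta v_k\|_2^2 \to A_\lambda.
\]
Thus $\{s_k v_k^\star\}_k$ is a radial minimizing sequence. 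Finally, exactly as in Lemma \ref{lemm1}, I rescale: set $u_k(x) = (s_k v_k^\star)\big((\|s_k v_k^\star\|_2^{1/2}) x\big)$. Because $G_\lambda$ scales so that the condition $G_\lambda(\cdot)=0$ is preserved under $u \mapsto u(\sigma x)$ together with the constraint, and the factor $\tfrac12\|\Delta u\|_2^2$ is invariant under this particular dilation (in $\mathbb{R}^4$, $\|\Delta u(\sigma \cdot)\|_2^2 = \|\Delta u\|_2^2$ when $\sigma = \|u\|_2^{1/2}$ is chosen to also normalize the $L^2$-norm to $1$), we obtain $\|u_k\|_2^2 = 1$, $u_k \in \mathcal{P}$, and $\tfrac12\|\Delta u_k\|_2^2 \to A_\lambda$, as required.

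The main obstacle is the correct bookkeeping of the two rearrangement inequalities and the scalings. One must be careful that Fourier rearrangement genuinely \emph{increases} $\int g_\lambda(u)\,dx$ (the direction matters: it is the opposite of what one naively expects, but it is what makes $G_\lambda(v_k^\star)\le 0$), and that it does \emph{not} increase $\|\Delta u\|_2^2$; both facts must be quoted precisely from \cite{Lenzmann} with the right hypotheses (the latter uses that $u\mapsto \|\Delta u\|_2^2$ corresponds to the radially monotone Fourier multiplier $|\xi|^4$). The dilation step is routine but needs the exponent $n=4$ to make $\|\Delta u\|_2^2$ scale-invariant under the norm-normalizing dilation — this is why the argument is cleanest in this dimension.
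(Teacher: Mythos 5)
Your proposal matches the paper's proof almost exactly: both apply the Lenzmann--Sok Fourier rearrangement to obtain a radial competitor with $\|\Delta\,\cdot\,\|_{2}$ not increased and $\int g_{\lambda}$ not decreased, then scale down by $s_{k}\in(0,1]$ to return to $\mathcal{P}$, and finally dilate by $\|\cdot\|_{2}^{1/2}$ to normalize the $L^{2}$-norm while leaving $\|\Delta\,\cdot\,\|_{2}$ and membership in $\mathcal{P}$ unchanged. The one point to tighten is your appeal to a ``P\'olya--Szeg\H{o} type pointwise'' inequality for the step $\int g_{\lambda}(v_{k}^{\star})\geq\int g_{\lambda}(v_{k})$: Fourier rearrangement is not a pointwise rearrangement of $u$, and the inequality holds here because $g_{\lambda}(t)=\tfrac{\lambda}{2}\sum_{n\geq 2}\tfrac{(2t^{2})^{n}}{n!}$ has nonnegative coefficients in powers of $t^{2}$ and Lenzmann--Sok prove $\|u^{\sharp}\|_{L^{2n}}\geq\|u\|_{L^{2n}}$ for every integer $n\geq1$, not for arbitrary even convex integrands.
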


\begin{proof}
Assume that $\{u_{k}\}_{k}$ is a minimizing sequence for $A_{\lambda}$, that
is $u_{k}\in\mathcal{P}$ satisfying
\[
\lim\limits_{k\rightarrow\infty}\frac{1}{2}\Vert\Delta u_{k}\Vert_{2}%
^{2}=A_{\lambda}.
\]
Denote by $w_{k}=\mathcal{F}^{-1}\{(\mathcal{F}(u_{k}))^{\ast}\}$ the Fourier
rearrangement of $u_{k}$, where $\mathcal{F}$ is the Fourier transform on
$\mathbb{R}^{4}$ (with its inverse $\mathcal{F}^{-1}$) and $f^{\ast}$ stands
for the Schwarz symmetrization of $f$. Using the property of the Fourier
rearrangement from \cite{Lenzmann}, one can derive that
\begin{align*}
\Vert\Delta w_{k}\Vert_{2}  &  \leq\Vert\Delta u_{k}\Vert_{2},\Vert w_{k}%
\Vert_{2}^{2}=\Vert u_{k}\Vert_{2}^{2},\ \\
\ \int_{\mathbb{R}^{4}}\left(  \exp(2w_{k}^{2})-1\right)  dx  &  \geq
\int_{\mathbb{R}^{4}}\left(  \exp(2u_{k}^{2})-1\right)  dx.
\end{align*}
Then it follows that
\[
(\gamma-\lambda)\Vert w_{k}\Vert_{2}^{2}=(\gamma-\lambda)\Vert u_{k}\Vert
^2_{2}=\int_{\mathbb{R}^{4}}g_{\lambda}(u_{k})dx\leq\int_{\mathbb{R}^{4}%
}g_{\lambda}(w_{k}).
\]
Hence if we set
\[
\eta(t)=(\gamma-\lambda)\Vert tw_{k}\Vert_{2}^{2}-\int_{\mathbb{R}^{4}%
}g_{\lambda}(tw_{k}),
\]
then $\eta(1)\leq0$. On the other hand, one can easily see $\eta(t)>0$ for $t>0$ sufficiently small.
Therefore, there exists $t_{k}\in(0,1]$ such that $\eta(t_{k})=0$, that is
$t_{k}w_{k}\in\mathcal{P}$. We obtain
\[
m_{\lambda}\leq I_{\lambda}(t_{k}w_{k})=\frac{1}{2}\Vert\Delta(t_{k}%
w_{k})\Vert_{2}^{2}\leq\frac{1}{2}t_{k}^{2}\Vert\Delta u_{k}\Vert_{2}^{2}\leq
I_{\lambda}(u_{k}).
\]
This implies that $\{v_{k}\}:=\{t_{k}w_{k}\}_{k}$ is a radial minimizing
sequence for $m_{\lambda}$. Let $\tilde{v}_{k}=w_{k}(\Vert v_{k}\Vert
_{2}^{1/2}x)$, it is easy to check that $\tilde{v}_{k}$ is a minimizing sequence for $A_{\lambda}$ with $\Vert\tilde{v}_{k}\Vert_{2}=1$. This accomplishes the proof of Lemma \ref{lem1}.
\end{proof}
\vskip0.1cm

Repeating the argument for (\ref{rescal}), we can show that if the infimum
$A_{\lambda}$ is attained, then the minimizer $u\in H_{r}^{2}(\mathbb{R}^{4})$
 under a suitable change of scale is a ground state solution of (\ref{7.1}).

\begin{lemma}
\label{lem2} If $A_{\lambda}<8\pi^{2}$, then $A_{\lambda}$ could be attained
and $A_{\lambda}=I_{\lambda}(u)$, where $u\in H_{r}^{2}(\mathbb{R}^{4})$
under a suitable change of scale is a ground-state solution of equation
(\ref{7.1}).
\end{lemma}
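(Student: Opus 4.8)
The plan is to show that the strict inequality $A_\lambda<8\pi^2$ forces the weak limit of a normalized radial minimizing sequence to be nontrivial, and then to upgrade weak convergence to the attainability of $A_\lambda$. First I would invoke Lemma \ref{lem1} to fix a radial minimizing sequence $\{u_k\}_k\subset\mathcal P$ with $\|u_k\|_2^2=1$ and $\tfrac12\|\Delta u_k\|_2^2\to A_\lambda$; in particular $\|\Delta u_k\|_2^2\to 2A_\lambda<16\pi^2$, so for $k$ large $\|\Delta u_k\|_2^2\le 32\pi^2 K$ for some $K<\tfrac{1}{2}$, i.e. with $\tfrac1K>2$. Passing to a subsequence, $u_k\rightharpoonup u$ in $H^2_r(\mathbb R^4)$, $u_k\to u$ a.e., and (since radial functions embed compactly into $L^q$ for $q>2$) $u_k\to u$ in $L^q_{loc}$ and even globally for the relevant subquadratic pieces. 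The nonlinearity $g_\lambda(t)=\tfrac{\lambda}{2}(\exp(2t^2)-1-2t^2)$ satisfies exactly the growth conditions of Theorem \ref{thm2} with this $K$: indeed $\lim_{t\to0}|t|^{-2}g_\lambda(t)=0$ and $\lim_{t\to\infty}|t|^2\exp(-\tfrac1K|t|^2)g_\lambda(t)=0$ because $\tfrac1K>2$. Hence by the compactness Theorem \ref{thm2}, $\int_{\mathbb R^4}g_\lambda(u_k)\,dx\to\int_{\mathbb R^4}g_\lambda(u)\,dx$.

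Next I would show $u\neq 0$. Since $u_k\in\mathcal P$, we have $(\gamma-\lambda)\|u_k\|_2^2=\int_{\mathbb R^4}g_\lambda(u_k)\,dx$, and the left side equals $\gamma-\lambda>0$ for all $k$. Passing to the limit using the compactness just established gives $\int_{\mathbb R^4}g_\lambda(u)\,dx=\gamma-\lambda>0$, which rules out $u\equiv0$. By weak lower semicontinuity, $\tfrac12\|\Delta u\|_2^2\le A_\lambda$ and $\|u\|_2^2\le 1$; moreover $(\gamma-\lambda)\|u\|_2^2\le (\gamma-\lambda)=\int g_\lambda(u)\,dx$, so $G_\lambda(u)\le 0$. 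I would then run the scaling argument already used in Lemma \ref{lemm2}: set $h(t)=G_\lambda(tu)=(\gamma-\lambda)t^2\|u\|_2^2-\int g_\lambda(tu)\,dx$; since $h(1)\le 0$ and $h(t)>0$ for small $t>0$ (because $g_\lambda(t)=o(t^2)$ as $t\to0$), there is $s_0\in(0,1]$ with $h(s_0)=0$, i.e. $s_0u\in\mathcal P$. Therefore $A_\lambda\le \tfrac12\|\Delta(s_0u)\|_2^2=\tfrac12 s_0^2\|\Delta u\|_2^2\le s_0^2 A_\lambda$, forcing $s_0=1$ and $\tfrac12\|\Delta u\|_2^2=A_\lambda$; thus $u\in\mathcal P$ attains $A_\lambda$ and $A_\lambda=I_\lambda(u)$ since on $\mathcal P$ the two expressions for the infimum coincide.

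Finally I would recall the rescaling discussion preceding the lemma (the analogue of \eqref{rescal}): because $u$ minimizes $\tfrac12\|\Delta u\|_2^2$ subject to $G_\lambda(u)=0$, there is a Lagrange multiplier $\theta$ with $\Delta^2 u=(2\theta-1)\bigl((\gamma-\lambda)u-g_\lambda'(u)\bigr)$ in $\mathbb R^4$, and testing against $u$ together with $u\in\mathcal P$ and the Ambrosetti--Rabinowitz type inequality $t g_\lambda'(t)-2g_\lambda(t)>0$ shows $2\theta-1<0$; then $\tilde u(x)=u\bigl(x/(1-2\theta)^{1/2}\bigr)$ solves \eqref{7.1}, and by the principle of symmetric criticality $\tilde u$ is a genuine weak solution, in $H^2_r(\mathbb R^4)$, with $I_\lambda(\tilde u)=A_\lambda\le m_\lambda$; since $\tilde u$ is itself a weak solution, $I_\lambda(\tilde u)\ge m_\lambda$, so $I_\lambda(\tilde u)=m_\lambda$ and $\tilde u$ is a ground state.

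The main obstacle I anticipate is justifying the passage $\int g_\lambda(u_k)\,dx\to\int g_\lambda(u)\,dx$ at the borderline level $2A_\lambda$ just below $16\pi^2$: one must be careful that the strict inequality $A_\lambda<8\pi^2$ gives genuine room (i.e. $\tfrac1K>2$ strictly) so that Theorem \ref{thm2} applies to $g_\lambda$, since at the exact threshold the concentration of a Moser-type sequence could destroy the convergence. The strictness is precisely what prevents vanishing/concentration, and verifying it feeds back into showing $u\neq0$; everything else is the by-now-standard scaling and Lagrange-multiplier bookkeeping inherited from Lemmas \ref{lemm1}--\ref{lemm2}.
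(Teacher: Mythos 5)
Your proposal is correct and follows essentially the same route as the paper: extract a normalized radial minimizing sequence, use Theorem~\ref{thm2} (with $\tfrac1K>2$ available because $2A_\lambda<16\pi^2$) to pass the nonlinear term to the limit, deduce $u\neq 0$ from the Pohozaev constraint, run the scaling argument $h(t)=G_\lambda(tu)$ with $h(1)\le 0$ to force $s_0=1$, and finish with the Lagrange-multiplier rescaling inherited from the discussion around~\eqref{rescal}. Your choice of $K$ strictly between $A_\lambda/16\pi^2$ and $1/2$ is in fact a slightly cleaner way to guarantee $\|\Delta u_k\|_2^2\le 32\pi^2K$ for large $k$ than the paper's borderline choice $K=A_\lambda/16\pi^2$, but the argument is otherwise the same.
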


\begin{proof}
Let $\{u_{k}\}_{k}$ is a radial minimizing sequence for $A_{\lambda}$, that is
$u_{k}\in\mathcal{P}$ satisfying
\[
\lim\limits_{k\rightarrow\infty}\frac{1}{2}\Vert\Delta u_{k}\Vert_{2}%
^{2}=A_{\lambda}\ \ \text{and}\ \ \Vert u_{k}\Vert_{2}^{2}=1.
\]
We also assume that $u_{k}\rightharpoonup u$ in $H^{2}(\mathbb{R}^{4})$. We
first prove that $A_{\lambda}>0$. By way of contradiction, we assume that
$A_{\lambda}=0$, namely $\lim\limits_{k\rightarrow\infty}\Vert\Delta
u_{k}\Vert_{2}^{2}=0$. This implies $u=0$. Since
\[
\ \lim\limits_{t\rightarrow+\infty}|t|^{2}\exp(-\alpha|t|^{2})g_{\lambda
}(t)=0\ \text{for\ any }\alpha>2,\ \ \lim\limits_{t\rightarrow0}|t|^{-2}%
g_{\lambda}(t)=0.
\]
It follows from Theorem \ref{thm2} that
\[
\lim\limits_{k\rightarrow\infty}\int_{\mathbb{R}^{4}}g_{\lambda}(u_{k}%
)dx=\int_{\mathbb{R}^{4}}g_{\lambda}(u)dx.
\]
On the other hand, since $u_{k}\in\mathcal{P}$ with $\Vert u_{k}\Vert_{2}%
^{2}=1$, we have
\[
0<(\gamma-\lambda)\leq\lim\limits_{k\rightarrow\infty}(\gamma-\lambda)\Vert
u_{k}\Vert_{2}^{2}=\lim\limits_{k\rightarrow\infty}\int_{\mathbb{R}^{4}%
}g_{\lambda}(u_{k})dx=\int_{\mathbb{R}^{4}}g_{\lambda}(u)dx,
\]
which contradicts $u=0$. This proves that $A_{\lambda}>0$. \medskip

Now we are in position to prove that if $A_{\lambda}<8\pi^{2}$, then
$A_{\lambda}$ could be attained. Under the assumption of Lemma \ref{lem2}, we
derive that $\lim_{k\rightarrow\infty}\Vert\Delta u_{k}\Vert_{2}^{2}=2A_{\lambda
}<16\pi^{2}$. Setting $K=A_{\lambda}/16\pi^{2}$, observing that
\[
\ \lim\limits_{t\rightarrow+\infty}|t|^{2}\exp(-\frac{1}{K}|t|^{2})g_{\lambda
}(t)=0\text{ and}\ \lim\limits_{t\rightarrow0}|t|^{-2}g_{\lambda}(t)=0,
\]
one can employ the compactness result obtained in Theorem \ref{thm2} to derive that
\[
\lim_{k\rightarrow\infty}\int_{\mathbb{R}^{4}}g_{\lambda}(u_{k})dx=\int
_{\mathbb{R}^{4}}g_{\lambda}(u)dx.
\]
Consequently, we get
\[
(\gamma-\lambda)=\lim\limits_{k\rightarrow\infty}(\gamma-\lambda)\Vert
u_{k}\Vert_{2}^{2}=\lim\limits_{k\rightarrow\infty}\int_{\mathbb{R}^{4}%
}g_{\lambda}(u_{k})dx=\int_{\mathbb{R}^{4}}g_{\lambda}(u)dx
\]
and
\begin{equation}
\frac{1}{2}\Vert\Delta u\Vert_{2}^{2}\leq\lim\limits_{k\rightarrow\infty}%
\frac{1}{2}\Vert\Delta u_{k}\Vert_{2}^{2}=A_{\lambda}. \label{lim}%
\end{equation}
In order to show $u$ is minimizer for $A_{\lambda}$, we only need to verify that
$G_{\lambda}(u)=0$. Set
\[
h(t)=G_{\lambda}(tu)=(\gamma-\lambda)\Vert tu\Vert_{2}^{2}-\int_{\mathbb{R}%
^{4}}g_{\lambda}(tu)dx.
\]
Since
\begin{align*}
G_{\lambda}(u)  &  =(\gamma-\lambda)\Vert u\Vert_{2}^{2}-\int_{\mathbb{R}^{4}%
}g_{\lambda}(u)dx\\
&  \leq\lim\limits_{k\rightarrow\infty}(\gamma-\lambda)\Vert u_{k}\Vert
_{2}^{2}-\int_{\mathbb{R}^{4}}g_{\lambda}(u_{k})dx=\lim\limits_{k\rightarrow
\infty}G_{\lambda}(u_{k})=0,
\end{align*}
then $h(1)\leq0$.  In view of $\lim_{t\rightarrow0^{+}}\frac{g_{\lambda}(t)}{t^{2}}=0$,
one can deduce that $h(t)>0$ for $t>0$ small enough. Consequently, there exists
$s_{0}\in(0,1]$ such that $G_{\lambda}(s_{0}u)=0$. By (\ref{lim}),
we obtain
\[
A_{\lambda}\leq\frac{1}{2}\Vert\Delta s_{0}u\Vert_{2}^{2}=\frac{1}{2}s_{0}%
^{2}\Vert\Delta u\Vert_{2}^{2}\leq s_{0}^{2}A_{\lambda},
\]
which leads to $s_{0}=1$ and $\frac{1}{2}\Vert\Delta u\Vert_{2}%
^{2}=A_{\lambda}$. Then we accomplish the proof of Lemma \ref{lem2}.
\end{proof}

Next, we prove the

\begin{lemma}
\label{lem5} The constrained minimization problem $A_{\lambda}$ is actually
strictly smaller than $8\pi^{2}$.
\end{lemma}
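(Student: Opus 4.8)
The plan is to prove $A_{\lambda}<8\pi^{2}$ by constructing an explicit competitor in the Pohozaev manifold $\mathcal{P}$ whose Dirichlet energy is strictly below $16\pi^{2}$. The natural candidate is a suitably truncated and rescaled \emph{Adams--Moser sequence}. Concretely, I would start from the standard concentrating family $M_{\varepsilon}(x)$ in $H^{2}(\mathbb{R}^{4})$ (spherically symmetric, supported in a ball, with a logarithmic profile near the origin) for which $\|\Delta M_{\varepsilon}\|_{2}^{2}=16\pi^{2}+o(1)$ as $\varepsilon\to 0$, while $M_{\varepsilon}$ takes large values $\sim\bigl(\tfrac{1}{8\pi^{2}}\log\tfrac{1}{\varepsilon}\bigr)^{1/2}\cdot(\text{const})$ on a ball of radius $\varepsilon$. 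The point is that for such $M_{\varepsilon}$ the exponential nonlinearity $g_{\lambda}(M_{\varepsilon})=\tfrac{\lambda}{2}(\exp(2M_{\varepsilon}^{2})-1-2M_{\varepsilon}^{2})$ picks up a contribution $\exp(2M_{\varepsilon}^{2})\gtrsim \varepsilon^{-2}$ on a set of measure $\sim\varepsilon^{4}$, hence $\int_{\mathbb{R}^{4}}g_{\lambda}(M_{\varepsilon})\,dx\to+\infty$. This super-linear blow-up of the nonlinear term, against the $O(1)$ growth of the Dirichlet energy, is exactly what forces the infimum down.

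The key steps, in order, are: \textbf{(i)} write down $M_{\varepsilon}$ explicitly and compute $\|\Delta M_{\varepsilon}\|_{2}^{2}$, $\|M_{\varepsilon}\|_{2}^{2}$, and a lower bound for $\int g_{\lambda}(M_{\varepsilon})\,dx$; one obtains $\|\Delta M_{\varepsilon}\|_{2}^{2}\le 16\pi^{2}+C\varepsilon\log(1/\varepsilon)$ or an even smaller correction depending on the truncation, $\|M_{\varepsilon}\|_{2}^{2}$ bounded, and $\int g_{\lambda}(M_{\varepsilon})\,dx\ge c\,\varepsilon^{-\sigma}$ for some $\sigma>0$. \textbf{(ii)} Project $M_{\varepsilon}$ onto $\mathcal{P}$: since $\eta_{\varepsilon}(t):=(\gamma-\lambda)\|tM_{\varepsilon}\|_{2}^{2}-\int_{\mathbb{R}^{4}}g_{\lambda}(tM_{\varepsilon})\,dx$ is positive for small $t>0$ and negative for large $t$ (because $g_{\lambda}(s)/s^{2}\to\infty$), there is $t_{\varepsilon}>0$ with $t_{\varepsilon}M_{\varepsilon}\in\mathcal{P}$; crucially $t_{\varepsilon}$ is \emph{bounded}, indeed $t_{\varepsilon}\to 0$, because the nonlinear term is so large. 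Quantitatively, $\eta_{\varepsilon}(t)<0$ already forces $t_{\varepsilon}^{2}\le (\gamma-\lambda)\|M_{\varepsilon}\|_{2}^{2}\big/\bigl(\inf_{s}g_{\lambda}(ts)/(t^2 s^2)\cdot\|M_\varepsilon\|_2^2\bigr)$-type bounds, and one checks $t_{\varepsilon}^{2}\|\Delta M_{\varepsilon}\|_{2}^{2}\to 0$, or at worst stays well below $16\pi^{2}$. \textbf{(iii)} Conclude $A_{\lambda}\le \tfrac{1}{2}\|\Delta(t_{\varepsilon}M_{\varepsilon})\|_{2}^{2}=\tfrac{1}{2}t_{\varepsilon}^{2}\|\Delta M_{\varepsilon}\|_{2}^{2}<8\pi^{2}$ for $\varepsilon$ small.

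The main obstacle is step (ii): one must show the scaling factor $t_{\varepsilon}$ is small enough that $\tfrac{1}{2}t_{\varepsilon}^{2}\|\Delta M_{\varepsilon}\|_{2}^{2}$ dips strictly under $8\pi^{2}$, and this is a competition between the $L^{2}$-mass of $M_{\varepsilon}$ (which controls the left side of $\eta_{\varepsilon}=0$) and the exponential integral (the right side). If one uses the truncated Moser profile with $\|M_{\varepsilon}\|_{2}^{2}$ bounded below away from zero, then $t_{\varepsilon}$ need not go to zero but $\eta_{\varepsilon}(1)<0$ for small $\varepsilon$ already gives $t_{\varepsilon}<1$, which only yields $A_{\lambda}\le \tfrac12\|\Delta M_\varepsilon\|_2^2$, not the strict inequality against $8\pi^{2}$ — so a more careful profile (or an additional small-amplitude modification localizing extra $L^2$ mass, exactly as in the Adachi--Tanaka/Ruf constructions) is needed to get $t_{\varepsilon}^2$ bounded by a constant strictly less than $1$. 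I would therefore calibrate the concentration rate and the truncation radius so that $\int g_\lambda(t M_\varepsilon)\,dx$ still blows up while $\|M_\varepsilon\|_2^2$ is comparatively small, forcing $t_\varepsilon^2 \le 1/2 + o(1)$, say, which then delivers $A_\lambda \le \tfrac12 t_\varepsilon^2(16\pi^2 + o(1)) < 8\pi^2$. The condition $\lambda<\gamma$ enters precisely to keep $\gamma-\lambda>0$ so that $\mathcal{P}$ is well-behaved and the projection $t_\varepsilon$ exists.
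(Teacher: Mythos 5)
Your strategy—concentrate an Adams--Moser bubble, let the exponential nonlinearity blow up, and project onto $\mathcal{P}$—is conceptually the right one, but the paper reaches the conclusion by a much shorter route that you have already done the hard work to enable: since $g_{\lambda}(t)=\tfrac{\lambda}{2}(\exp(2t^{2})-1-2t^{2})$ satisfies $\lim_{t\to\infty}t^{2}e^{-2t^{2}}g_{\lambda}(t)=+\infty$, Theorem \ref{thm1} immediately gives that the Adams ratio for $g_{\lambda}$ at the threshold Dirichlet energy $16\pi^{2}$ is infinite. Thus one simply extracts a single $u_{0}\in H^{2}(\mathbb{R}^{4})$ with $\Vert\Delta u_{0}\Vert_{2}^{2}\leq 16\pi^{2}$ and $(\gamma-\lambda)\Vert u_{0}\Vert_{2}^{2}<\int g_{\lambda}(u_{0})\,dx$, i.e.\ $G_{\lambda}(u_{0})<0$. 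All your explicit Moser-profile computations in step (i) are exactly what went into the proof of the necessity part of Theorem \ref{thm1}, so re-deriving them here is redundant.

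More importantly, your worry in the final paragraph—that $t_{\varepsilon}<1$ ``only yields $A_{\lambda}\le\tfrac12\Vert\Delta M_{\varepsilon}\Vert_{2}^{2}$, not the strict inequality''—is a confusion. Once you normalize so that $\Vert\Delta u_{0}\Vert_{2}^{2}\leq 16\pi^{2}$ (which costs nothing: multiply the profile by $(16\pi^{2})^{1/2}/\Vert\Delta M_{\varepsilon}\Vert_{2}$ and note the exponential integral still blows up), the projection factor $s_{0}\in(0,1)$ is \emph{strictly} less than $1$ because $h(1)=G_{\lambda}(u_{0})<0$ strictly while $h(s)>0$ for small $s$, and therefore
\[
A_{\lambda}\leq\frac{1}{2}s_{0}^{2}\Vert\Delta u_{0}\Vert_{2}^{2}\leq 8\pi^{2}s_{0}^{2}<8\pi^{2}
\]
with no further calibration. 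There is no competition to balance: the only thing you need from the test function is a bound $\Vert\Delta u_{0}\Vert_{2}^{2}\leq 16\pi^{2}$ together with the strict negativity of $G_{\lambda}(u_{0})$, and both are delivered by the infinite Adams ratio. Your proposed refinement forcing $t_{\varepsilon}^{2}\leq\tfrac12+o(1)$ would work but is unnecessary effort.
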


\begin{proof}
Note that the Adams ratio for $g_{\lambda}(u)$ is $+\infty$, hence there exists $u_{0}\in
H^{2}(\mathbb{R}^{4})$ such that
\[
(\gamma-\lambda)\leq\frac{1}{\Vert u_{0}\Vert_{2}^{2}}\int_{\mathbb{R}^{4}%
}g_{\lambda}(u_{0})dx,\ \ \Vert\Delta u_{0}\Vert_{2}^{2}\leq16\pi^{2},
\]
thus, we have $G_{\lambda}(u_{0})=(\gamma-\lambda)\Vert u_{0}\Vert_{2}^{2}%
-\int_{\mathbb{R}^{4}}g_{\lambda}(u_{0})dx<0$. Then there exists $s_{0}%
\in(0,1)$ such that $s_{0}u_{0}\in\mathcal{P}$, which yields that
\[
A_{\lambda}\leq\frac{1}{2}\Vert\Delta(s_{0}u_{0})\Vert_{2}^{2}=\frac{1}%
{2}s_{0}^{2}\Vert\Delta u_{0}\Vert_{2}^{2}\leq8\pi^{2}s_{0}^{2}<8\pi^{2}.
\]
Then the lemma is proved.
\end{proof}

\section{Existence of ground state solutions for bi-harmonic equation with
the Rabinowitz type potential}

In this section, we are concerned with the ground states of the following quasilinear bi-harmonic equation with the Rabinowitz type potential%

\begin{equation}
\left(  -\Delta\right)  ^{2}u+V\left(  x\right)  u=\lambda\exp\left(
2u^{2}\right)  u \label{variable},%
\end{equation}
where $\lambda$ and $V\left(  x\right)  $ satisfy
\[
0<\lambda<V_{0}=\underset{x\in\mathbb{R}^{4}}{\inf}V\left(  x\right)
<\underset{\left\vert x\right\vert \rightarrow\infty}{\lim}V\left(  x\right)
=\gamma.
\]

The associated functional and Nehari Manifold are
\[
I_{V}\left(  u\right)  =\frac{1}{2}\int_{\mathbb{R}^{4}}\left(  \left\vert
\Delta u\right\vert ^{2}+V\left(  x\right)  \left\vert u\right\vert
^{2}\right)  dx-\frac{\lambda}{4}\int_{\mathbb{R}^{4}}\left(  \exp\left(
2u^{2}\right)  -1\right)  dx
\]
and%

\[
\mathcal{N}_{V}=\left\{  \left.  u\in H^{2}\left(  \mathbb{R}^{4}\right)
\right\vert u\neq0,N_{V}\left(  u\right)  =0\right\}  ,
\]
respectively, where
\[
N_{V}\left(  u\right)  =\int_{\mathbb{R}^{4}}\left(  \left\vert \Delta
u\right\vert ^{2}+V\left(  x\right)  \left\vert u\right\vert ^{2}\right)
dx-\lambda\int_{\mathbb{R}^{4}}\exp\left(  2u^{2}\right)  u^{2}dx.
\]

In order to study the equation (\ref{variable}), we introduce the following
limiting equation
\begin{equation}
\left(  -\Delta\right)  ^{2}u+\gamma u=\lambda\exp\left(  2u^{2}\right)  u.
\label{limit}%
\end{equation}
The corresponding functional and Nehari Manifold associated with (\ref{limit}) is%

\[
I_{\infty}\left(  u\right)  =\frac{1}{2}\int_{\mathbb{R}^{4}}\left(
\left\vert \Delta u\right\vert ^{2}+\gamma\left\vert u\right\vert ^{2}\right)
dx-\frac{\lambda}{4}\int_{\mathbb{R}^{4}}\left(  \exp\left(  2u^{2}\right)
-1\right)  dx
\]
and%

\[
\mathcal{N}_{\infty}=\left\{  \left.  u\in H^{2}\left(  \mathbb{R}^{4}\right)
\right\vert u\neq0,N_{\infty}\left(  u\right)  =0\right\}  ,
\]
where
\[
N_{\infty}\left(  u\right)  =\int_{\mathbb{R}^{4}}\left(  \left\vert \Delta
u\right\vert ^{2}+\gamma\left\vert u\right\vert ^{2}\right)  dx-\lambda
\int_{\mathbb{R}^{4}}\exp\left(  2u^{2}\right)  u^{2}dx.
\]

One can easily verify that if $u\in\mathcal{N}_{V}$, then
\[
I_{V}\left(  u\right)  =\frac{\lambda}{4}\int_{\mathbb{R}^{4}}\left(
\exp\left(  2u^{2}\right)  2u^{2}-\left(  \exp\left(  2u^{2}\right)
-1\right)  \right)  dx,
\]
and if $u\in\mathcal{N}_{\infty}$, then
\[
I_{\infty}\left(  u\right)  =\frac{\lambda}{4}\int_{\mathbb{R}^{4}}\left(
\exp\left(  2u^{2}\right)  2u^{2}-\left(  \exp\left(  2u^{2}\right)
-1\right)  \right)  dx.
\]

\begin{lemma}
\bigskip\ For any $u\in H^{2}\left(  \mathbb{R}^{4}\right)  $, there exist
unique $t_{u}$ and $\tilde{t}_{u}$ such that $t_{u}u\in\mathcal{N}_{q}$ and
$\tilde{t}_{u}u\in\mathcal{N}_{\infty}$.
\end{lemma}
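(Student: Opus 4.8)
The plan is to fix a nonzero $u\in H^{2}(\mathbb{R}^{4})$ and analyze the fiber map $\varphi_{u}(t):=I_{V}(tu)$ on $[0,+\infty)$, showing that it has exactly one critical point in $(0,+\infty)$; the argument for $\mathcal N_{\infty}$ is word for word the same with $I_{\infty}$ (equivalently the constant $\gamma$) in place of $I_{V}$ (equivalently $V(x)$). Since $I_{V}\in C^{1}(H^{2}(\mathbb{R}^{4}),\mathbb{R})$, the map $\varphi_{u}$ is $C^{1}$ on $[0,+\infty)$ with $\varphi_{u}'(t)=I_{V}'(tu)[u]=t^{-1}N_{V}(tu)$ for $t>0$; hence $t_{u}u\in\mathcal N_{V}$ if and only if $t_{u}\in(0,+\infty)$ is a critical point of $\varphi_{u}$. (If $u=0$ there is nothing to prove, since $0\notin\mathcal N_{V}$.)

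For the existence of such a critical point I would use that
$\varphi_{u}(t)=\tfrac{t^{2}}{2}\big(\|\Delta u\|_{2}^{2}+\int_{\mathbb{R}^{4}}V|u|^{2}\,dx\big)-\tfrac{\lambda}{4}\int_{\mathbb{R}^{4}}(\exp(2t^{2}u^{2})-1)\,dx$,
so $\varphi_{u}(0)=0$. From the expansion $\exp(2t^{2}u^{2})-1=2t^{2}u^{2}+O(t^{4}u^{4})$ and the strict positivity of $\|\Delta u\|_{2}^{2}+\int_{\mathbb{R}^{4}}(V-\lambda)|u|^{2}\,dx$ (which holds because $V\geq V_{0}>\lambda$), one gets $\varphi_{u}(t)>0$ for $t>0$ small. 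On the other hand, picking $\delta>0$ with $|\{|u|\geq\delta\}|>0$, the bound $\int_{\mathbb{R}^{4}}(\exp(2t^{2}u^{2})-1)\,dx\geq|\{|u|\geq\delta\}|\,(\exp(2t^{2}\delta^{2})-1)$ forces $\varphi_{u}(t)\to-\infty$ as $t\to+\infty$. Hence $\varphi_{u}$ attains its positive maximum over $[0,+\infty)$ at an interior point $t_{u}\in(0,+\infty)$, which is the desired critical point and produces a scaling with $t_{u}u\in\mathcal N_{V}$.

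For uniqueness I would instead look directly at $\psi_{u}(t):=t^{-2}N_{V}(tu)=\|\Delta u\|_{2}^{2}+\int_{\mathbb{R}^{4}}V|u|^{2}\,dx-\lambda\int_{\mathbb{R}^{4}}\exp(2t^{2}u^{2})u^{2}\,dx$, so that $N_{V}(tu)=0$ exactly when $\psi_{u}(t)=0$. For $u\neq0$ the integrand $\exp(2t^{2}u^{2})u^{2}$ is nondecreasing in $t\geq0$ everywhere and strictly increasing on $\{u\neq0\}$, a set of positive measure; since on any bounded interval of $t$ it is dominated by its value at the right endpoint, the map $t\mapsto\int_{\mathbb{R}^{4}}\exp(2t^{2}u^{2})u^{2}\,dx$ is continuous and strictly increasing, so $\psi_{u}$ is continuous and strictly decreasing on $[0,+\infty)$. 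As $\psi_{u}(0)=\|\Delta u\|_{2}^{2}+\int_{\mathbb{R}^{4}}(V-\lambda)|u|^{2}\,dx>0$ and $\psi_{u}(t)\to-\infty$, there is exactly one $t_{u}\in(0,+\infty)$ with $\psi_{u}(t_{u})=0$, hence a unique scaling with $t_{u}u\in\mathcal N_{V}$. Replacing $V$ throughout by the constant $\gamma$ (still $\gamma>V_{0}>\lambda$) yields the unique $\tilde t_{u}$ with $\tilde t_{u}u\in\mathcal N_{\infty}$.

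The only point that needs a word of justification is that $\varphi_{u}$ and $\psi_{u}$ are genuinely real-valued and continuous, i.e. that $\int_{\mathbb{R}^{4}}(\exp(\beta u^{2})-1)\,dx$ and $\int_{\mathbb{R}^{4}}\exp(\beta u^{2})u^{2}\,dx$ are finite for every $u\in H^{2}(\mathbb{R}^{4})$ and every $\beta>0$; this is the standard exponential integrability of $H^{2}(\mathbb{R}^{4})$-functions underlying the $C^{1}$ regularity of $I_{V}$ invoked above, and it follows from the sharp Adams inequality \eqref{int1} (treating separately the region $\{|u|\leq M\}$, controlled by $\|u\|_{2}^{2}$, and the finite-measure region $\{|u|>M\}$). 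Granting that, there is no real obstacle: the strict monotonicity of $\psi_{u}$ that gives uniqueness is just pointwise monotonicity of $t\mapsto\exp(2t^{2}u^{2})u^{2}$ combined with $|\{u\neq0\}|>0$, and the limits at $t=0$ and $t=+\infty$ are elementary.
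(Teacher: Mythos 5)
Your proof is correct and follows essentially the same route as the paper: both reduce the problem to a sign change of $t^{-2}N_{V}(tu)$, positive at $t=0^{+}$ because $\|\Delta u\|_{2}^{2}+\int(V-\lambda)|u|^{2}>0$, negative for large $t$ by the exponential growth, together with strict monotonicity of $t\mapsto\int\exp(2t^{2}u^{2})u^{2}\,dx$ for uniqueness. The extra fiber-map maximization argument for existence and the explicit justification of finiteness of the integrals via the Adams inequality are harmless additions but not structurally different from what the paper does.
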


\begin{proof}
For any $u\in H^{2}\left(  \mathbb{R}^{4}\right)  $, we have
\[
N_{V}\left(  tu\right)  =t^{2}\int_{\mathbb{R}^{4}}\left(  \left\vert \Delta
u\right\vert ^{2}+\left(  V\left(  x\right)  -\lambda\right)  \left\vert
u\right\vert ^{2}\right)  dx-\lambda\int_{\mathbb{R}^{4}}\left(  \exp\left(
2t^{2}u^{2}\right)  -1\right)  t^{2}u^{2}dx.
\]
Since
\[
\underset{t\rightarrow0}{\lim}\frac{\left(  \exp\left(  2t^{2}u^{2}\right)
-1\right)  t^{2}u^{2}}{t^{2}}=0\text{ and}\underset{t\rightarrow\infty}{\lim
}\frac{\left(  \exp\left(  2t^{2}u^{2}\right)  -1\right)  t^{2}u^{2}}{t^{2}%
}=+\infty,
\]
then $N_{V}\left(  tu\right)  >0$ for $s>0$ small enough and $N_{V}\left(
tu\right)  <0$ for $t$ sufficient large. With the help of the monotonicity of
$\exp\left(  2t^{2}u^{2}\right)$, there exists a unique $t_{u}>0$ such that
$t_{u}u\in\mathcal{N}_{V}$. The proof for $\mathcal{N}_{\infty}$ is similar.
\end{proof}
\medskip

Set
\[
m_{\infty}=\inf\left\{  I_{\infty}\left(  u\right)  ,u\in\mathcal{N}%
_{\infty}\right\}  \text{ and }m_{V}=\inf\left\{  I_{V}\left(  u\right)
,u\in\mathcal{N}_{V}\right\}  .
\]
\bigskip From Corollary \ref{coroll}, we know that $m_{\infty}$ is attained by some
$w\in\mathcal{N}_{\infty}$.

\begin{lemma}
\label{impor}There holds
\begin{equation}
0<m_{V}<m_{\infty}. \label{the rang}%
\end{equation}

\end{lemma}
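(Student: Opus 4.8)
The plan is to prove the two inequalities in \eqref{the rang} separately. The lower bound $0<m_V$ follows from a standard Nehari-manifold argument: for $u\in\mathcal{N}_V$ one has $N_V(u)=0$, and combining the exponential growth with the Adams inequality \eqref{Adams entire space} one shows there is $\rho>0$ with $\|u\|_{H^2}\geq\rho$ for all $u\in\mathcal{N}_V$; since on $\mathcal{N}_V$ the functional satisfies $I_V(u)=\frac{\lambda}{4}\int_{\mathbb{R}^4}\bigl(\exp(2u^2)2u^2-(\exp(2u^2)-1)\bigr)dx$ and the integrand $s\mapsto e^{2s}2s-(e^{2s}-1)\geq0$ is increasing and positive for $s>0$, a uniform lower bound on $\|u\|_2$ (extracted from the $N_V$ relation together with $\lambda<V_0$) forces $m_V>0$.

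The substantive part is the strict inequality $m_V<m_\infty$. First I would record the ordering $m_V\leq m_\infty$: given the ground state $w\in\mathcal{N}_\infty$ furnished by Corollary \ref{coroll}, the previous lemma gives a unique $t_w>0$ with $t_w w\in\mathcal{N}_V$, so $m_V\leq I_V(t_w w)$. Because $V(x)\leq\gamma$ pointwise (this is where the Rabinowitz condition \eqref{Rabinowitz} enters, namely $V_0=\inf V<\lim_{|x|\to\infty}V=\gamma$ with $V$ continuous, and in fact one needs $V(x)<\gamma$ on a set of positive measure — which follows since $V$ is continuous, $\inf V=V_0<\gamma$, so $V<\gamma$ on a nonempty open set), we have $N_V(tw)\geq N_\infty(tw)$ for all $t>0$ with strict inequality, which forces $t_w\leq 1$; then
\begin{align*}
m_V\leq I_V(t_w w)&=I_V(t_w w)-\tfrac14 N_V(t_w w)\\
&=\tfrac14\Bigl(\|\Delta(t_w w)\|_2^2+\!\int_{\mathbb{R}^4}\!V(x)|t_w w|^2dx\Bigr)-\tfrac{\lambda}{4}\!\int_{\mathbb{R}^4}\!\bigl(\exp(2t_w^2w^2)-1\bigr)dx+\tfrac{\lambda}{4}\!\int_{\mathbb{R}^4}\!\exp(2t_w^2w^2)t_w^2w^2dx
\end{align*}
is strictly less than the same expression with $V$ replaced by $\gamma$ and $t_w$ replaced by $1$, using monotonicity in $t$ of the map $t\mapsto I_\infty(tw)-\frac14 N_\infty(tw)$ along the Nehari fibre and the strict pointwise gain from $V<\gamma$. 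Hence $m_V<I_\infty(w)=m_\infty$.

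The main obstacle I anticipate is making the strict inequality genuinely strict rather than merely non-strict: one must be careful that the ground state $w$ does not vanish on the region where $V<\gamma$. Since $w$ solves \eqref{limit} and is (after the symmetrization/rescaling of Section 5) radial and, by the exponential nonlinearity and elliptic regularity, real-analytic and nowhere zero on a neighborhood of the origin, while $\{x:V(x)<\gamma\}$ is a nonempty open set, the overlap has positive measure and the strict gain $\int V|w|^2<\gamma\|w\|_2^2$ is real. A secondary technical point is justifying $t_w\leq 1$: one uses that $\varphi(t):=N_V(tw)/t^2=\int(|\Delta w|^2+(V-\lambda)|w|^2)dx-\lambda\int(\exp(2t^2w^2)-1)w^2dx$ is strictly decreasing in $t>0$ (the $\exp$ term is strictly increasing), that $\varphi(1)\leq N_\infty(w)/1=0$ because $V\leq\gamma$ and $N_\infty(w)=0$, hence the unique zero $t_w$ of $\varphi$ satisfies $t_w\leq1$, with $t_w<1$ once $V<\gamma$ somewhere on $\mathrm{supp}\,w$. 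Assembling these pieces yields \eqref{the rang}.
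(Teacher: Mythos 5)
For the strict inequality $m_V<m_\infty$ your plan is close to the paper's but skips the translation: you feed the limiting ground state $w$ directly into $\mathcal{N}_V$ via the fibering map, getting $t_w w\in\mathcal{N}_V$ with $t_w<1$ and hence $I_V(t_w w)<I_\infty(w)=m_\infty$. The paper instead uses $u_k=t_k w_{y_k}$ with $w_{y_k}=w(\cdot-y_k)$, $|y_k|\to\infty$, $t_k<1$, $t_k\to 1$. Both work under the same (implicit, but genuinely needed) strengthening $V\leq\gamma$ with $V<\gamma$ on a set of positive measure, and your untranslated version is the simpler one \emph{provided} $\int(V-\gamma)|w|^2\,dx<0$ is truly strict. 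Your justification for that is insufficient: you say $w$ is ``nowhere zero on a neighborhood of the origin,'' but the open set $\{V<\gamma\}$ coming from continuity and $\inf V<\gamma$ can be located anywhere, so what you actually need is $w\neq 0$ a.e.\ on all of $\mathbb{R}^4$ (e.g.\ by unique continuation or real-analyticity of the solution of the fourth-order equation), so that $\{w\neq0\}\cap\{V<\gamma\}$ has positive measure regardless of where the latter set is. With the translation, for $|y_k|$ large the bulk of $w_{y_k}$ sits where $V$ is close to $\gamma$, which is precisely the robustness the paper's translation buys. Also, your sentence ``$N_V(tw)\geq N_\infty(tw)$ for all $t>0$\ldots which forces $t_w\leq 1$'' has the inequality reversed — since $V\leq\gamma$ one has $N_V(tw)\leq N_\infty(tw)$, hence $N_V(w)\leq N_\infty(w)=0$ and $t_w\leq 1$; your later $\varphi$-argument uses the correct sign, so this is a local slip.

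For $0<m_V$ the routes genuinely diverge, and your articulated step has a gap. The paper argues by contradiction: if $I_V(u_k)\to 0$ along $\mathcal{N}_V$, then \eqref{control1} and $N_V(u_k)=0$ force $\|u_k\|_{H^2}\to 0$; pushing along $\tilde t_{u_k}u_k\in\mathcal{N}_\infty$ with $\tilde t_{u_k}$ bounded, the Adams inequality gives $I_\infty(\tilde t_{u_k}u_k)\to 0$, contradicting $m_\infty>0$. You instead claim a ``uniform lower bound on $\|u\|_2$'' and want to conclude directly. That mechanism does not work: $N_V(u)=0$ together with $\lambda<V_0$ does not give a lower bound on $\|u\|_2$, and even if it did, a lower bound on $\|u\|_2$ alone does not control $\int\bigl(2e^{2u^2}u^2-(e^{2u^2}-1)\bigr)dx$ (spreading $u$ out makes $\int u^4$ arbitrarily small while $\|u\|_2$ stays fixed). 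A correct direct estimate in the spirit you intend is available: for $u\in\mathcal{N}_V$,
\[
I_V(u)=I_V(u)-\tfrac14 N_V(u)=\tfrac14\int_{\mathbb{R}^4}\bigl(|\Delta u|^2+V(x)|u|^2\bigr)dx+\tfrac{\lambda}{4}\int_{\mathbb{R}^4}\bigl(e^{2u^2}u^2-e^{2u^2}+1\bigr)dx,
\]
and since $e^{2s}s-e^{2s}+1\geq -s$ for $s\geq 0$ (the function $e^{2s}(s-1)+1+s$ vanishes with its first derivative at $0$ and has second derivative $4se^{2s}\geq 0$), one gets $I_V(u)\geq\tfrac14\min(1,V_0-\lambda)\|u\|_{H^2}^2$, exactly where $\lambda<V_0$ enters; combined with the Adams-based lower bound $\|u\|_{H^2}\geq\rho$ on $\mathcal{N}_V$ that you correctly invoke, this yields $m_V\geq\tfrac14\min(1,V_0-\lambda)\rho^2>0$. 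So the skeleton of your argument is sound, but the $\|u\|_2$-step as stated would fail and must be replaced by this coercivity estimate.
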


\begin{proof}
We first claim that for any $u\in m_{V}$, there exists some \emph{bounded}
$\tilde{t}_{u\text{ }}>1$ such that $\tilde{t}_{u}u\in\mathcal{N}_{\infty}$.
Indeed, for any $u\in\mathcal{N}_{V}$, by the assumption of $V\left(
x\right)  $, we have
\begin{align*}
\lambda\int_{\mathbb{R}^{4}}\exp\left(  2u^{2}\right)  u^{2}dx  &
=\int_{\mathbb{R}^{4}}\left(  \left\vert \Delta u\right\vert ^{2}+V\left(
x\right)  \left\vert u\right\vert ^{2}\right)  dx\\
&  <\int_{\mathbb{R}^{4}}\left(  \left\vert \Delta u\right\vert ^{2}%
+\gamma\left\vert u\right\vert ^{2}\right)  dx=\lambda\int_{\mathbb{R}^{4}%
}\exp\left(  2\tilde{t}_{u}^{2}u^{2}\right)  u^{2}dx,
\end{align*}
which implies $\tilde{t}_{u\text{ }}>1$. Now, we prove the boundedness of $\tilde
{t}_{u\text{ }}$. By the assumption of $V\left(  x\right)  $, there exists
some constant $c>0$ such that
\begin{align}
c\lambda\int_{\mathbb{R}^{4}}\exp\left(  2u^{2}\right)  u^{2}dx  &
=c\int_{\mathbb{R}^{4}}\left(  \left\vert \Delta u\right\vert ^{2}+V\left(
x\right)  \left\vert u\right\vert ^{2}\right)  dx\nonumber\\
&  \geq\int_{\mathbb{R}^{4}}\left(  \left\vert \Delta u\right\vert
^{2}+\left(  \gamma-\lambda\right)  \left\vert u\right\vert ^{2}\right)
dx\nonumber\\
&  =\lambda\int_{\mathbb{R}^{4}}\left(  \exp\left(  2\tilde{t}_{u\text{ }}%
^{2}u^{2}\right)  -1\right)  u^{2}dx. \label{a1}%
\end{align}
Since $\lambda\int_{\mathbb{R}^{4}}\left(  \exp\left(  2u^{2}\right)
-1\right)  u^{2}dx=\int_{\mathbb{R}^{4}}\left(  \left\vert \Delta u\right\vert
^{2}+\left(  V\left(  x\right)  -\lambda\right)  \left\vert u\right\vert
^{2}\right)  dx$, we derive that%
\begin{align}
\lambda\int_{\mathbb{R}^{4}}\left(  \exp\left(  2\tilde{t}_{u}^{2}%
u^{2}\right)  -1\right)  u^{2}dx  &  \geq\lambda\tilde{t}_{u\text{ }}^{2}%
\int_{\mathbb{R}^{4}}\left(  \exp\left(  2u^{2}\right)  -1\right)
u^{2}dx\nonumber\\
&  =\tilde{t}_{u}^{2}\int_{\mathbb{R}^{4}}\left(  \left\vert \Delta
u\right\vert ^{2}+\left(  V\left(  x\right)  -\lambda\right)  \left\vert
u\right\vert ^{2}\right)  dx. \label{a2}%
\end{align}
Combining (\ref{a1}) and (\ref{a2}), we conclude that
\begin{align*}
c\int_{\mathbb{R}^{4}}\left(  \left\vert \Delta u\right\vert ^{2}+V\left(
x\right)  \left\vert u\right\vert ^{2}\right)  dx  &  =c\lambda\int
_{\mathbb{R}^{4}}\exp\left(  2u^{2}\right)  u^{2}dx\\
&  \geq\lambda\int_{\mathbb{R}^{4}}\left(  \exp\left(  2\tilde{t}_{u\text{ }%
}^{2}u^{2}\right)  -1\right)  u^{2}dx\\
&  \geq\tilde{t}_{u\text{ }}^{2}\int_{\mathbb{R}^{4}}\left(  \left\vert \Delta
u\right\vert ^{2}+\left(  V\left(  x\right)  -\lambda\right)  \left\vert
u\right\vert ^{2}\right)  dx.
\end{align*}
Therefore, $\tilde{t}_{u\text{ }}$must be bounded.
\vskip0.1cm

To show that $m_{V}<m_{\infty}$, it is enough to build a sequence $\left(
u_{k}\right)  _{k}$ satisfying $u_{k}\in\mathcal{N}_{V}$ such that $\underset
{k\rightarrow\infty}{\lim}I_{V}\left(  u_{k}\right)  \leq m_{\infty}.$ Consider
$\left(  y_{k}\right)  _{k}$ with $y_{k}\in\mathbb{R}^{4}%
$ and $\left\vert y_{k}\right\vert \rightarrow\infty$.
We define $u_{k}=t_{k}w_{y_{k}}$, where $w_{y_{k}}=w\left(  \cdot-y_{k}\right)
$ and $t_{k}=t_{w_{y_{k}}}$ satisfying $u_{k}=t_{k}w_{y_{k}}\in$ $\mathcal{N}%
_{V}$. Then we have%
\[
\int_{\mathbb{R}^{4}}\left(  \left\vert \Delta w\right\vert ^{2}%
+\gamma\left\vert w\right\vert ^{2}\right)  dx=\lambda\int_{\mathbb{R}^{4}%
}\exp\left(  2w^{2}\right)  w^{2}dx
\]
and%
\begin{align*}
\lambda\int_{\mathbb{R}^{4}}\exp\left(  2t_{k}^{2}w^{2}\right)  w^{2}dx  &
=\int_{\mathbb{R}^{4}}\left(  \left\vert \Delta w_{y_{k}}\right\vert
^{2}+V\left(  x\right)  \left\vert w_{y_{k}}\right\vert ^{2}\right)  dx\\
&  =\int_{\mathbb{R}^{4}}\left(  \left\vert \Delta w\right\vert ^{2}+V\left(
x+y_{k}\right)  \left\vert w\right\vert ^{2}\right)  dx\\
&  \rightarrow\int_{\mathbb{R}^{4}}\left(  \left\vert \Delta w\right\vert
^{2}+\gamma\left\vert w\right\vert ^{2}\right)  dx\\
&  =\lambda\int_{\mathbb{R}^{4}}\exp\left(  2w^{2}\right)  w^{2}dx.
\end{align*}
This gives $t_{k}\rightarrow1$ as $k\rightarrow\infty$. Observe that%
\begin{equation}%
\begin{array}
[c]{l}%
c\left(  \exp\left(  2u^{2}\right)  2u^{2}-2u^{2}\right)  \leq\exp\left(
2u^{2}\right)  2u^{2}-\exp\left(  2u^{2}\right)  +1\\
=\frac{\left(  2u^{2}\right)  ^{2}}{2}+\left(  \frac{1}{2}-\frac{1}%
{3!}\right)  \left(  2u^{2}\right)  ^{3}+\left(  \frac{1}{3!}-\frac{1}%
{4!}\right)  \left(  2u^{2}\right)  ^{4}+\ldots\\
+\left(  \frac{1}{\left(  n-1\right)  !}-\frac{1}{n!}\right)  \left(
2u^{2}\right)  ^{n}+\ldots\leq\exp\left(  2u^{2}\right)  2u^{2}-2u^{2},
\end{array}
\label{control1}%
\end{equation}
then%
\begin{align*}
\underset{k\rightarrow\infty}{\lim}I_{V}\left(  u_{k}\right)   &
=\underset{k\rightarrow\infty}{\lim}\frac{\lambda}{4}\int_{\mathbb{R}^{4}%
}\left(  \exp\left(  2t_{k}^{2}w_{y_{k}}^{2}\right)  2t_{k}^{2}w_{y_{k}}%
^{2}-\left(  \exp\left(  2t_{k}^{2}w_{y_{k}}^{2}\right)  -1\right)  \right)
dx\\
&  =\underset{k\rightarrow\infty}{\lim}\frac{\lambda}{4}\int_{\mathbb{R}^{4}%
}\left(  \exp\left(  2t_{k}^{2}w^{2}\right)  2t_{k}^{2}w^{2}-\left(
\exp\left(  2t_{k}^{2}w^{2}\right)  -1\right)  \right)  dx\\
&  \leq\frac{\lambda}{4}\int_{\mathbb{R}^{4}}\left(  \exp\left(
2w^{2}\right)  2w^{2}-\left(  \exp\left(  2w^{2}\right)  -1\right)  \right)
dx=m_{\infty}.
\end{align*}
Therefore, $m_{V}<m_{\infty}$ thanks to $t_{k}<1$.
\vskip0.1cm

Next, we show $m_{V}>0$. We prove this by contradiction. Assume that there exists
some sequence $u_{k}\in$ $\mathcal{N}_{V}$ such that $I_{V}\left(
u_{k}\right)  \rightarrow0$, that is,%
\[
\frac{\lambda}{4}\int_{\mathbb{R}^{4}}\left(  \exp\left(  2u_{k}^{2}\right)
2u_{k}^{2}-\exp\left(  2u_{k}^{2}\right)  -1\right)  dx\rightarrow0.
\]
This together with (\ref{control1}) and $N_{V}\left(  u_{k}\right)  =0\ $implies
that
\[
\int_{\mathbb{R}^{4}}\exp\left(  2u_{k}^{2}\right)  2u_{k}^{2}dx\rightarrow
0\text{ and }\int_{\mathbb{R}^{4}}\left\vert \Delta u_{k}\right\vert
^{2}dx\rightarrow0.
\]

Pick $\tilde{t}_{u_{k}}>0$ such that $\tilde{t}_{u_{k}}u_{k}\in$ $\mathcal{N}%
_{\infty}$. Since $\tilde{t}_{u_{k}}$ is bounded, then $\int_{\mathbb{R}%
^{4}}\left\vert \Delta\tilde{t}_{u_{k}}u_{k}\right\vert ^{2}dx\rightarrow0$.
Hence, it follows that
\begin{align*}
I_{\infty}\left(  \tilde{t}_{u_{k}}u_{k}\right) &=\int_{\mathbb{R}^{4}%
}\left(  \exp\left(  2\tilde{t}_{u_{k}}^{2}u_{k}^{2}\right)  2\tilde{t}%
_{u_{k}}^{2}u_{k}^{2}-\left(  \exp\left(  2\tilde{t}_{u_{k}}^{2}u_{k}%
^{2}\right)  -1\right)  \right)  dx\\
&  \leq c\left(  \int_{\mathbb{R}^{4}}\left(  \exp\left(  2\tilde{t}_{u_{k}%
}^{2}u_{k}^{2}\right)  -1\right)  2\tilde{t}_{u_{k}}^{2}u_{k}^{2}dx+\tilde
{t}_{u_{k}}^{2}\int_{\mathbb{R}^{4}}\left\vert u_{k}\right\vert ^{2}dx\right)
\\
&  \leq c\tilde{t}_{u_{k}}^{2}\int_{\mathbb{R}^{4}}\left(  \left\vert \Delta
u_{k}\right\vert ^{2}+\left\vert u_{k}\right\vert ^{2}\right)  dx+\tilde
{t}_{u_{k}}^{2}\int_{\mathbb{R}^{4}}\left\vert u_{k}\right\vert ^{2}%
dx\rightarrow0,
\end{align*}
where we have used the fact that
\[
\int_{\mathbb{R}^{4}}\left(  \exp\left(  2u_{k}^{2}\right)  -1\right)
u_{k}^{2}dx\leq c\int_{\mathbb{R}^{4}}\left(  \left\vert \Delta u_{k}%
\right\vert ^{2}+\left\vert u_{k}\right\vert ^{2}\right)  dx
\]
provided that $\int_{\mathbb{R}^{4}}\left(  \left\vert \Delta u_{k}\right\vert
^{2}+\left\vert u_{k}\right\vert ^{2}\right)  dx$ is small enough. Next, we prove this fact.
Indeed, direct computation yields %
\begin{align*}
\int_{\mathbb{R}^{4}}\left(  \exp\left(  2u_{k}^{2}\right)  -1\right)
u_{k}^{2}dx  &  \leq\left(  \int_{\mathbb{R}^{4}}\left(  \exp\left(
2u_{k}^{2}\right)  -1\right)  ^{p}dx\right)  ^{1/p}\left(  \int_{\mathbb{R}%
^{4}}u_{k}^{2p^{\prime}}dx\right)  ^{1/p^{\prime}}\\
&  \leq c\left(  \int_{\mathbb{R}^{4}}\left(  \exp\left(  2pu_{k}^{2}\right)
-1\right)  dx\right)  ^{1/p}\left(  \int_{\mathbb{R}^{4}}u_{k}^{2p^{\prime}%
}dx\right)  ^{1/p^{\prime}},
\end{align*}
where $p$ and $p^{\prime}$ satisfy$\ \frac{1}{p}+\frac{1}{p^{\prime}}=1$.
\ Since $\int_{\mathbb{R}^{4}}\left(  \left\vert \Delta u_{k}\right\vert
^{2}+\left\vert u_{k}\right\vert ^{2}\right)  dx$ is small, then by the Adams'
inequality (\ref{Adams entire space}) and the Sobolev inequality, we get
\[
\int_{\mathbb{R}^{4}}\left(  \exp\left(  2u_{k}^{2}\right)  -1\right)
u_{k}^{2}dx\leq c\int_{\mathbb{R}^{4}}\left(  \left\vert \Delta u_{k}%
\right\vert ^{2}+\left\vert u_{k}\right\vert ^{2}\right)  dx.
\]
On the other hand, we have%
\[
I_{\infty}\left(  \tilde{t}_{u_{k}}u_{k}\right)  =\frac{\lambda}{4}%
\int_{\mathbb{R}^{4}}\left(  \exp\left(  2\tilde{t}_{u_{k}}^{2}u_{k}%
^{2}\right)  2\tilde{t}_{u_{k}}^{2}u_{k}^{2}-\left(  \exp\left(  2\tilde
{t}_{u_{k}}^{2}u_{k}^{2}\right)  -1\right)  \right)  dx\geq m_{\infty}.
\]
This is a contradiction. Therefore, $m_{V}>0$.
\end{proof}

\begin{lemma}
The minimizing sequence $\left\{  u_{k}\right\}  _{k}\subset\mathcal{N}%
_{V}\mathcal{\ }$is bounded in $H^{2}\left(  \mathbb{R}^{4}\right)  $.
\end{lemma}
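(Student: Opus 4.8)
The plan is to reduce the claim to an a priori bound on the weighted norm $\Vert u\Vert_V^2:=\int_{\mathbb{R}^4}(|\Delta u|^2+V(x)|u|^2)\,dx$, and then to extract such a bound from the Nehari constraint $N_V(u_k)=0$ together with the boundedness of the energy $I_V(u_k)$ along the minimizing sequence. First I would observe that, since $V$ is continuous on $\mathbb{R}^4$ with $\inf_{\mathbb{R}^4}V=V_0>0$ and $\lim_{|x|\to\infty}V(x)=\gamma<\infty$, the potential is also bounded above, say $V_0\le V(x)\le V_{\max}$; hence $\min\{1,V_0\}\,\Vert u\Vert_{H^2}^2\le\Vert u\Vert_V^2\le\max\{1,V_{\max}\}\,\Vert u\Vert_{H^2}^2$, so it is enough to bound $\Vert u_k\Vert_V^2$ uniformly in $k$.

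Next, using that $u_k\in\mathcal{N}_V$, so $N_V(u_k)=0$, I would record the two identities
\[
\Vert u_k\Vert_V^2=\lambda\int_{\mathbb{R}^4}\exp(2u_k^2)\,u_k^2\,dx,\qquad
I_V(u_k)=\frac{\lambda}{4}\int_{\mathbb{R}^4}\bigl(2u_k^2\exp(2u_k^2)-\exp(2u_k^2)+1\bigr)\,dx.
\]
Since $I_V(u_k)\to m_V$ and $m_V<+\infty$ by Lemma \ref{impor}, the second identity gives $\int_{\mathbb{R}^4}\bigl(2u_k^2\exp(2u_k^2)-\exp(2u_k^2)+1\bigr)\,dx\le C$ with $C$ independent of $k$. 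Applying the left-hand inequality in \eqref{control1} pointwise and integrating yields $c\int_{\mathbb{R}^4}\bigl(2u_k^2\exp(2u_k^2)-2u_k^2\bigr)\,dx\le C$, that is,
\[
\int_{\mathbb{R}^4}\exp(2u_k^2)\,u_k^2\,dx\le\int_{\mathbb{R}^4}u_k^2\,dx+\frac{C}{2c}.
\]

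Combining this with the Nehari identity and the elementary bound $\int_{\mathbb{R}^4}u_k^2\,dx\le V_0^{-1}\int_{\mathbb{R}^4}V(x)|u_k|^2\,dx\le V_0^{-1}\Vert u_k\Vert_V^2$, I obtain
\[
\Vert u_k\Vert_V^2=\lambda\int_{\mathbb{R}^4}\exp(2u_k^2)\,u_k^2\,dx\le\frac{\lambda}{V_0}\Vert u_k\Vert_V^2+\frac{\lambda C}{2c}.
\]
Because $\lambda<V_0$, the coefficient $\lambda/V_0<1$ can be absorbed into the left-hand side, which gives $\Vert u_k\Vert_V^2\le\frac{\lambda C V_0}{2c\,(V_0-\lambda)}$, a bound independent of $k$; by the norm equivalence $\{u_k\}_k$ is then bounded in $H^2(\mathbb{R}^4)$.

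The computations above are routine; the only genuinely used ingredient is the interplay between the Nehari constraint, the two-sided elementary estimate \eqref{control1}, and --- crucially --- the hypothesis $\lambda<V_0$, which is exactly what permits the term $\lambda\int_{\mathbb{R}^4}u_k^2$ produced by the exponential nonlinearity to be reabsorbed into $\Vert u_k\Vert_V^2$. The only subtlety worth checking is that $V$ is bounded above, so that the weighted norm is truly equivalent to the $H^2$ norm; this is immediate from continuity together with the existence of a finite limit at infinity.
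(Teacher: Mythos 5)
Your proof is correct and follows essentially the same route as the paper: starting from the Nehari constraint $N_V(u_k)=0$, you bound the term $\int(\exp(2u_k^2)-1)u_k^2\,dx$ via the pointwise estimate \eqref{control1} and the boundedness of $I_V(u_k)\to m_V$, and then absorb the remaining $\lambda\int u_k^2\,dx$ using $\lambda<V_0$. The paper instead groups $(V(x)-\lambda)|u_k|^2$ on the left-hand side and concludes at once from $V_0-\lambda>0$, which is algebraically the same absorption; the only cosmetic difference is that your final remark about $V$ being bounded above is not actually needed, since only the lower bound $V\geq V_0$ is used to pass from a bound on $\Vert u_k\Vert_V$ to a bound on $\Vert u_k\Vert_{H^2}$.
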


\begin{proof}
From (\ref{control1}), we know
\begin{align*}
\int_{\mathbb{R}^{4}}\left(  \left\vert \Delta u_{k}\right\vert ^{2}+\left(
V\left(  x\right)  -\lambda\right)  \left\vert u_{k}\right\vert ^{2}\right)
dx  &  =\frac{\lambda}{2}\int_{\mathbb{R}^{4}}\left(  \exp\left(  2u_{k}%
^{2}\right)  -1\right)  2u_{k}^{2}dx\\
&  \leq\frac{c\lambda}{2}\left(  \int_{\mathbb{R}^{4}}\left(  \exp\left(
2u_{k}^{2}\right)  2u_{k}^{2}-\left(  \exp\left(  2u_{k}^{2}\right)
-1\right)  \right)  dx\right) \\
&  \rightarrow2cm_{V}.
\end{align*}
Then the proof is finished from the  assumption of $V\left(  x\right)  $.
\end{proof}
\vskip0.1cm

We now consider a minimizing sequence $\left\{  u_{k}\right\}  _{k}%
\subset\mathcal{N}_{V}$. Since the sequence is bounded in $H^{2}\left(
\mathbb{R}^{4}\right)  $, then up to a subsequence, there exists $u\in
H^{2}\left(  \mathbb{R}^{4}\right)  $, such that

\begin{itemize}
\item $u_{k}\rightarrow u$ weakly in $H^{2}\left(  \mathbb{R}^{4}\right)  $
and in $L^{p}\left(  \mathbb{R}^{4}\right)  $, for any $p>1$,

\item $u_{k}\rightarrow u$ in $L_{loc}^{p}\left(  \mathbb{R}^{4}\right)  $,

\item $u_{k}\rightarrow u$, a.e.
\end{itemize}

By extracting a subsequence, if necessary, we define $\beta$, $l\geq0$ as%

\[
\beta=\underset{k}{\lim}\int_{\mathbb{R}^{4}}\exp\left(  2u_{k}^{2}\right)
u_{k}^{2}dx\text{ and }l=\int_{\mathbb{R}^{4}}\exp\left(  2u^{2}\right)
u^{2}dx.
\]
By the weak convergence, it is obvious that $l\in\left[  0,\beta\right]  $.

\begin{lemma}
There results $\beta>0$.
\end{lemma}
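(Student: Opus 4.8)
The plan is to argue by contradiction: suppose $\beta = 0$. Since each $u_k$ lies on the Nehari manifold $\mathcal{N}_V$, we have the identity
\[
\int_{\mathbb{R}^{4}}\left(\left\vert\Delta u_k\right\vert^{2}+V(x)\left\vert u_k\right\vert^{2}\right)dx=\lambda\int_{\mathbb{R}^{4}}\exp\left(2u_k^{2}\right)u_k^{2}\,dx.
\]
If $\beta=0$, the right-hand side tends to $0$, so $\int_{\mathbb{R}^{4}}\left(\left\vert\Delta u_k\right\vert^{2}+V(x)\left\vert u_k\right\vert^{2}\right)dx\to 0$. Using $V(x)\ge V_0>0$ this forces $\left\Vert\Delta u_k\right\Vert_2^2+\left\Vert u_k\right\Vert_2^2\to 0$, i.e. $u_k\to 0$ in $H^{2}(\mathbb{R}^{4})$.

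Next I would show this contradicts $m_V>0$ (established in Lemma \ref{impor}). Recall that for $u_k\in\mathcal{N}_V$,
\[
I_V(u_k)=\frac{\lambda}{4}\int_{\mathbb{R}^{4}}\left(\exp\left(2u_k^{2}\right)2u_k^{2}-\left(\exp\left(2u_k^{2}\right)-1\right)\right)dx,
\]
and by the elementary inequality (\ref{control1}) the integrand is bounded above by $c\left(\exp(2u_k^{2})-1\right)u_k^{2}$. Exactly as in the proof of Lemma \ref{impor}, once $\left\Vert\Delta u_k\right\Vert_2^2+\left\Vert u_k\right\Vert_2^2$ is small, a Hölder splitting together with the Adams inequality (\ref{Adams entire space}) and the Sobolev embedding yields
\[
\int_{\mathbb{R}^{4}}\left(\exp\left(2u_k^{2}\right)-1\right)u_k^{2}\,dx\le c\int_{\mathbb{R}^{4}}\left(\left\vert\Delta u_k\right\vert^{2}+\left\vert u_k\right\vert^{2}\right)dx\to 0.
\]
Hence $I_V(u_k)\to 0$. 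But $\{u_k\}_k$ is a minimizing sequence for $m_V$, so $I_V(u_k)\to m_V$, and combining gives $m_V=0$, contradicting Lemma \ref{impor}. Therefore $\beta>0$.

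There is essentially no serious obstacle here; the only point requiring a little care is the passage from smallness of the $H^{2}$-norm to smallness of $\int(\exp(2u_k^{2})-1)u_k^{2}\,dx$, but this is precisely the subcritical estimate already carried out in the proof of Lemma \ref{impor} and can simply be invoked. One should also note that boundedness of the Nehari minimizing sequence in $H^{2}(\mathbb{R}^{4})$ is not needed for this step; the contradiction uses only the limiting behavior of the two functionals along the sequence.
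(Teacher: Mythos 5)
Your proposal is correct, but it takes a noticeably longer path than the paper's. You first use the Nehari identity to conclude $\|u_k\|_{H^2}\to 0$ (from $\beta=0$ and $V\ge V_0>0$), and then invoke the H\"older/Adams subcritical estimate $\int(\exp(2u_k^2)-1)u_k^2\,dx\le c\|u_k\|_{H^2}^2$ to kill $I_V(u_k)$. The paper dispenses with all of this and works directly with the pointwise inequality $2t\exp(t)\ge \exp(t)-1$ (the left half of (\ref{control1})): from $\beta=0$ one gets $\int\exp(2u_k^2)u_k^2\,dx\to0$, whence also $\int(\exp(2u_k^2)-1)\,dx\to0$, and since $I_V(u_k)$ on $\mathcal{N}_V$ is (up to $\lambda/4$) exactly the difference of these two integrals, $I_V(u_k)\to0$, contradicting $m_V>0$. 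Neither the Adams inequality, nor the $H^2$-smallness, nor even the Nehari identity beyond the rewriting of $I_V$ is needed. In fact, within your own argument you could have shortened things by noting $(\exp(2u_k^2)-1)u_k^2\le \exp(2u_k^2)u_k^2$, which tends to $0$ directly by $\beta=0$, making the Adams detour superfluous. Both routes are valid; the paper's is the more economical.
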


\begin{proof}
We argue this by contradiction. Assume that $\beta=0$.
Since
\[
\int_{\mathbb{R}^{4}}\left(  \exp\left(  2u_{k}^{2}\right)  \right)
2u_{k}^{2}dx>c\int_{\mathbb{R}^{4}}\left(  \exp\left(  2u_{k}^{2}\right)
-1\right)  dx,
\]
 then $\int_{\mathbb{R}^{4}}\left(  \exp\left(
2u_{k}^{2}\right)  -1\right)  dx\rightarrow0$. Thus it follows that
\[
I_{V}\left(  u_{k}\right)  =\frac{\lambda}{4}\int_{\mathbb{R}^{4}}\left(
\exp\left(  2u_{k}^{2}\right)  2u_{k}^{2}-\left(  \exp\left(  2u_{k}%
^{2}\right)  -1\right)  \right)  dx\rightarrow0,
\]
which contradicts (\ref{the rang}).
\end{proof}

\begin{lemma}
If $l=\beta$, then $u\in\mathcal{N}_{V}$ and $I_{V}\left(  u\right)  =m_{V}$.
\end{lemma}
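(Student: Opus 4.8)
The plan is to use the hypothesis $l=\beta$ to rule out any loss of mass in the exponential nonlinearity, show that the weak limit $u$ lands on the Nehari manifold $\mathcal{N}_{V}$, and then read off $I_{V}(u)=m_{V}$ by lower semicontinuity. First I would record that $u\neq 0$: by the previous lemma $\beta>0$, and since $l=\int_{\mathbb{R}^{4}}\exp(2u^{2})u^{2}\,dx=\beta>0$ by assumption, $u$ is not identically zero. It is convenient to set $\phi(s)=2s\exp(2s)-\exp(2s)+1$, so that $\phi(0)=0$ and $\phi'(s)=4s\exp(2s)>0$ for $s>0$; hence $\phi\geq 0$ and $\phi$ is strictly increasing on $[0,\infty)$, and with this notation $I_{V}(v)=\tfrac{\lambda}{4}\int_{\mathbb{R}^{4}}\phi(v^{2})\,dx$ whenever $v\in\mathcal{N}_{V}$.

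Next I would pass to the limit in the Nehari identity $N_{V}(u_{k})=0$. Since $\Delta u_{k}\rightharpoonup\Delta u$ and $\sqrt{V}\,u_{k}\rightharpoonup\sqrt{V}\,u$ weakly in $L^{2}(\mathbb{R}^{4})$ (recall $0<V_{0}\leq V\leq\gamma$), weak lower semicontinuity of the $L^{2}$-norm gives
\begin{align*}
\|\Delta u\|_{2}^{2}+\int_{\mathbb{R}^{4}}V u^{2}\,dx
&\leq \liminf_{k\to\infty}\Big(\|\Delta u_{k}\|_{2}^{2}+\int_{\mathbb{R}^{4}}V u_{k}^{2}\,dx\Big)\\
&=\lambda\lim_{k\to\infty}\int_{\mathbb{R}^{4}}\exp(2u_{k}^{2})u_{k}^{2}\,dx=\lambda\beta=\lambda l=\lambda\int_{\mathbb{R}^{4}}\exp(2u^{2})u^{2}\,dx,
\end{align*}
where the first equality uses $N_{V}(u_{k})=0$, the second the definition of $\beta$, and the third the hypothesis $l=\beta$; this says exactly $N_{V}(u)\leq 0$. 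In the same spirit, Fatou's lemma applied to the nonnegative integrand $\phi(u_{k}^{2})$ (using $u_{k}\to u$ a.e.) yields $\tfrac{\lambda}{4}\int_{\mathbb{R}^{4}}\phi(u^{2})\,dx\leq\liminf_{k}\tfrac{\lambda}{4}\int_{\mathbb{R}^{4}}\phi(u_{k}^{2})\,dx=\liminf_{k}I_{V}(u_{k})=m_{V}$.

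It then remains to upgrade $N_{V}(u)\leq 0$ to $N_{V}(u)=0$. Suppose, for contradiction, that $N_{V}(u)<0$. By the projection lemma established above there is a unique $t_{u}>0$ with $t_{u}u\in\mathcal{N}_{V}$; since $t\mapsto t^{-2}N_{V}(tu)$ is strictly decreasing (monotonicity of $\exp$ together with $|\{u\neq 0\}|>0$) and positive for $t$ near $0$, the inequality $N_{V}(u)<0$ forces $t_{u}\in(0,1)$. Because $\phi$ is strictly increasing and $t_{u}^{2}u^{2}<u^{2}$ on the positive-measure set $\{u\neq 0\}$,
\[
m_{V}\ \leq\ I_{V}(t_{u}u)=\frac{\lambda}{4}\int_{\mathbb{R}^{4}}\phi(t_{u}^{2}u^{2})\,dx\ <\ \frac{\lambda}{4}\int_{\mathbb{R}^{4}}\phi(u^{2})\,dx\ \leq\ m_{V},
\]
a contradiction. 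Hence $N_{V}(u)=0$, i.e. $u\in\mathcal{N}_{V}$, and then $m_{V}\leq I_{V}(u)=\tfrac{\lambda}{4}\int_{\mathbb{R}^{4}}\phi(u^{2})\,dx\leq m_{V}$ forces $I_{V}(u)=m_{V}$.

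The step I expect to be the main obstacle is the passage to the limit giving $N_{V}(u)\leq 0$: one must pair the weak lower semicontinuity of the quadratic part with the \emph{exact} convergence of the exponential term, and it is precisely here that the hypothesis $l=\beta$ enters in an essential way — without it the nonlinearity may lose mass at infinity (this is the case $l<\beta$, handled separately). Once $N_{V}(u)\leq 0$ is in hand, the monotone-rescaling argument excluding $N_{V}(u)<0$ is routine, resting only on the elementary fact that $\phi'(s)=4s\exp(2s)\geq 0$.
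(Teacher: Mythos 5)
Your argument is essentially the same as the paper's: weak lower semicontinuity of the quadratic part plus the hypothesis $l=\beta$ gives $N_{V}(u)\leq 0$, Fatou applied to the nonnegative integrand $\phi(u_k^2)=2u_k^2\exp(2u_k^2)-\exp(2u_k^2)+1$ gives $I_{V}(u)\leq m_{V}$, and the rescaling contradiction (projecting with $t_u\in(0,1)$ and invoking strict monotonicity of $\phi$) rules out $N_{V}(u)<0$. You additionally record $u\neq 0$ explicitly, which the paper leaves implicit; otherwise the two proofs coincide.
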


\begin{proof}
If $l=\beta$, then $\int_{\mathbb{R}^{4}}\exp\left(  2u_{k}^{2}\right)
u_{k}^{2}dx\rightarrow\int_{\mathbb{R}^{4}}\exp\left(  2u^{2}\right)  u^{2}%
dx$ as $k\rightarrow+\infty$. Then one can get
\begin{align*}
\int_{\mathbb{R}^{4}}\left(  \left\vert \Delta u\right\vert ^{2}+V\left(
x\right)  \left\vert u\right\vert ^{2}\right)  dx  &  \leq\underset
{k\rightarrow\infty}{\lim}\int_{\mathbb{R}^{4}}\left(  \left\vert \Delta
u_{k}\right\vert ^{2}+V\left(  x\right)  \left\vert u_{k}\right\vert
^{2}\right)  dx\\
&  =\underset{k\rightarrow\infty}{\lim}\lambda\int_{\mathbb{R}^{4}}\exp\left(
2u_{k}^{2}\right)  u_{k}^{2}dx\\
&  =\underset{k\rightarrow\infty}{\lim}\lambda\int_{\mathbb{R}^{4}}\exp\left(
2u^{2}\right)  u^{2}dx.
\end{align*}
If the above equality holds, then $u\in\mathcal{N}_{V}$, and the lemma is proved.
Therefore, it remains to show that
the case
\begin{equation}
\int_{\mathbb{R}^{4}}\left(  \left\vert \Delta u\right\vert ^{2}+V\left(
x\right)  \left\vert u\right\vert ^{2}\right)  dx<\lambda\int_{\mathbb{R}^{4}%
}\exp\left(  2u^{2}\right)  u^{2}dx \label{add}%
\end{equation}
cannot occur. In fact, if (\ref{add}) hold, we can take some $t\in\left(  0,1\right)  $
such that $tu\in\mathcal{N}_{V}$. Then we have
\begin{align*}
m_{\lambda}  &  \leq \frac{\lambda}{4}%
\int_{\mathbb{R}^{4}}\left(  \exp\left(  2t^{2}u^{2}\right)  2t^{2}%
u^{2}-\left(  \exp\left(  2t^{2}u^{2}\right)  -1\right)  \right)  dx\\
&  <\frac{\lambda}{4}\int_{\mathbb{R}^{4}}\left(  \exp\left(  2u^{2}\right)
2u^{2}-\left(  \exp\left(  2u^{2}\right)  -1\right)  \right)  dx\\
&  \leq\underset{k\rightarrow\infty}{\lim}\frac{\lambda}{4}\int_{\mathbb{R}%
^{4}}\left(  \exp\left(  2u_{k}^{2}\right)  2u_{k}^{2}-\left(  \exp\left(
2u_{k}^{2}\right)  -1\right)  \right)  dx=m_{\lambda},
\end{align*}
which is a contradiction.
\end{proof}

\begin{lemma}
\label{vanished all}The case $l=0$ cannot occur.
\end{lemma}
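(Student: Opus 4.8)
The plan is to argue by contradiction, assuming that the minimizing sequence $\{u_k\}_k \subset \mathcal{N}_V$ satisfies $l=0$, i.e.\ $u_k \rightharpoonup u$ with $\int_{\mathbb{R}^4} \exp(2u^2)u^2\,dx = 0$, which forces $u \equiv 0$. The point is that the weak limit carries no mass, so we must rule out the "escaping to infinity" scenario by comparing with the limiting problem. Since the previous lemma already disposed of $l=\beta$ when $u\neq 0$, and here $u=0$ would make $l=\beta$ mean $\beta=0$ (excluded by Lemma 6.6), we are genuinely in the regime $0=l<\beta$.

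First I would use concentration-compactness on the sequence of measures $\mu_k = (|\Delta u_k|^2 + V(x)|u_k|^2)\,dx$ (or equivalently on $\exp(2u_k^2)u_k^2\,dx$), which has bounded total mass by the boundedness lemma. Vanishing of the $L^2$-local mass is impossible: if $u_k$ vanished in the sense of Lions, a standard argument (using the Adams inequality \eqref{Adams entire space} together with the Sobolev embedding, exactly as in the proof that $m_V>0$ in Lemma \ref{impor}) would give $\int_{\mathbb{R}^4}(\exp(2u_k^2)-1)u_k^2\,dx \to 0$, contradicting $\beta>0$. Hence, after passing to a subsequence, there exist points $y_k \in \mathbb{R}^4$ with $|y_k|\to\infty$ (the case $y_k$ bounded would contradict $u\equiv 0$ by $L^2_{loc}$ convergence) such that the translates $\tilde u_k(\cdot) := u_k(\cdot + y_k)$ converge weakly to some $\tilde u \neq 0$ in $H^2(\mathbb{R}^4)$.

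The key step is then to show that this translated limit $\tilde u$ is a subsolution-type competitor for the limiting Nehari manifold $\mathcal{N}_\infty$ and yields $m_\infty \leq \liminf I_V(u_k) = m_V$, contradicting $m_V < m_\infty$ from Lemma \ref{impor}. Concretely: because $|y_k|\to\infty$ and $V(x+y_k) \to \gamma$ uniformly on compact sets (with $V(x+y_k)\geq V_0$ everywhere), passing to the limit in $N_V(u_k)=0$ using Fatou's lemma on the nonlinear term and weak lower semicontinuity on the quadratic part gives
\[
\int_{\mathbb{R}^4}\left(|\Delta \tilde u|^2 + \gamma |\tilde u|^2\right)dx \leq \lambda \int_{\mathbb{R}^4} \exp(2\tilde u^2)\tilde u^2\,dx,
\]
so $N_\infty(\tilde u) \leq 0$. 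Since $\tilde u \neq 0$, there is $\tau \in (0,1]$ with $\tau \tilde u \in \mathcal{N}_\infty$, and using the monotonicity identity $I_\infty(v) = \frac{\lambda}{4}\int (\exp(2v^2)2v^2 - (\exp(2v^2)-1))\,dx$ together with the pointwise bound \eqref{control1} and Fatou, one gets
\[
m_\infty \leq I_\infty(\tau\tilde u) \leq \liminf_{k\to\infty} \frac{\lambda}{4}\int_{\mathbb{R}^4}\left(\exp(2u_k^2)2u_k^2 - (\exp(2u_k^2)-1)\right)dx = m_V.
\]
This contradicts $m_V < m_\infty$, so the case $l=0$ is impossible.

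The main obstacle I anticipate is making the translation/compactness step rigorous in a non-radial, non-symmetric setting: one must justify that the profile $\tilde u$ extracted after translation is nontrivial and that Fatou's lemma genuinely applies to $\int \exp(2u_k^2)u_k^2\,dx$ under only weak $H^2$ convergence — the exponential nonlinearity is delicate, and one needs the a.e.\ convergence of the translates plus nonnegativity of the integrand to invoke Fatou, while simultaneously controlling that no mass is lost to a third "bump." A clean way around a full profile decomposition is to note that we only need one nontrivial concentration point: vanishing is excluded, and $u=0$ forces the mass to escape, so a single translated weak limit suffices for the comparison. Care is also needed to ensure $\tau \leq 1$ (not just $\tau$ finite), which follows because $N_\infty(\tilde u)\leq 0$ and the map $t \mapsto \lambda\int(\exp(2t^2\tilde u^2)-1)\tilde u^2\,dx \big/ t^2$ is strictly increasing.
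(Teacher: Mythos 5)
Your overall strategy (translate along a sequence $y_k\to\infty$, extract a nontrivial weak limit $\tilde u$, project onto $\mathcal{N}_\infty$, and compare with $m_V<m_\infty$) is a legitimate concentration-compactness template, and it is genuinely different from the paper's. The paper never translates or extracts a profile: it uses the claim, established inside Lemma \ref{impor}, that for each $u_k\in\mathcal{N}_V$ there is a \emph{bounded} $t_k\geq1$ with $t_ku_k\in\mathcal{N}_\infty$; it then proves $\int_{\mathbb{R}^4}(\gamma-V(x))|u_k|^2\,dx\to0$ (from $u_k\to0$ in $L^2_{\mathrm{loc}}$ and the Rabinowitz decay of $\gamma-V$) and uses this, via the monotonicity of $s\mapsto(\exp(2s^2t^2)-1)s^2$ and a case split, to force $t_k\to1$, so that $m_\infty\leq I_\infty(t_ku_k)\to m_V$ directly. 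That argument stays entirely on the original sequence and avoids the need for any a.e.\ convergence of a nonlinear integrand.

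The difficulty with your version is a concrete sign error in the key step. From $N_V(u_k)=0$ and translation-invariance of each integral, the translated sequence satisfies
\[
\int_{\mathbb{R}^4}\bigl(|\Delta\tilde u_k|^2+V(x+y_k)|\tilde u_k|^2\bigr)\,dx=\lambda\int_{\mathbb{R}^4}\exp(2\tilde u_k^2)\tilde u_k^2\,dx=:\lambda\beta+o(1).
\]
Weak lower semicontinuity together with $V(\cdot+y_k)\to\gamma$ locally uniformly gives $\int(|\Delta\tilde u|^2+\gamma|\tilde u|^2)\leq\lambda\beta$, and Fatou gives $\lambda\int\exp(2\tilde u^2)\tilde u^2\leq\lambda\beta$. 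Both inequalities point the \emph{same} way, toward $\lambda\beta$ from below in the limit, so they do not yield the comparison $\int(|\Delta\tilde u|^2+\gamma|\tilde u|^2)\leq\lambda\int\exp(2\tilde u^2)\tilde u^2$, i.e.\ they do not give $N_\infty(\tilde u)\leq0$. Without that sign you cannot guarantee $\tau\leq1$, and the chain $m_\infty\leq I_\infty(\tau\tilde u)\leq\liminf I_V(u_k)=m_V$ breaks down. To repair your route you would need either a Brezis--Lieb type splitting for the exponential nonlinearity (so that the "far" bump contributes nonnegatively and can be subtracted, as the paper does in Lemmas \ref{spit lemma}--\ref{adxin1} for the $0<l<\beta$ regime), or to upgrade the minimizing sequence to a Palais--Smale sequence and pass to the limit in the Euler--Lagrange equation to get $\tilde u\in\mathcal{N}_\infty$ exactly. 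As written, the assertion that Fatou plus weak lsc gives $N_\infty(\tilde u)\leq0$ is a genuine gap.
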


\begin{proof}
We prove this by contradiction. If $l=0$, then $u=0$, and $u_{k}\rightarrow0$
in $L_{loc}^{2}\left(  \mathbb{R}^{4}\right)  $. We first claim that:%
\begin{equation}
\int_{\mathbb{R}^{4}}\left(  \gamma-V\left(  x\right)  \right)  \left\vert
u_{k}\right\vert ^{2}dx=0. \label{vanish}%
\end{equation}

For any fixed $\varepsilon>0$, we take $R_{\varepsilon}>0$ such that
\[
\left\vert \gamma-V\left(  x\right)  \right\vert \leq\varepsilon,\text{ for
any }\left\vert x\right\vert >R_{\varepsilon}.
\]
Combining this and the boundedness of  $u_{k}$ in $H^{2}\left(  \mathbb{R}^{4}\right)$, we derive that
\begin{align*}
\int_{\mathbb{R}^{4}}\left(  \gamma-V\left(  x\right)  \right)  \left\vert
u_{k}\right\vert ^{2}dx  &  =\int_{B_{R_{\varepsilon}}}\left(  \gamma-V\left(
x\right)  \right)  \left\vert u_{k}\right\vert ^{2}dx+\int_{B_{R_{\varepsilon
}}^{c}}\left(  \gamma-V\left(  x\right)  \right)  \left\vert u_{k}\right\vert
^{2}dx\\
&  \leq c\int_{B_{R_{\varepsilon}}}\left\vert u_{k}\right\vert ^{2}dx+\gamma
M\varepsilon,
\end{align*}
where $M=\underset{k\rightarrow\infty}{\sup}\int_{\mathbb{R}^{4}}\left\vert
u_{k}\right\vert ^{2}dx$. This together with  $u_{k}\rightarrow0$ in $L_{loc}^{2}\left(
\mathbb{R}^{4}\right)  $ as $k\rightarrow\infty$ yields that
\[
\int_{\mathbb{R}^{4}}\left(  \gamma-V\left(  x\right)  \right)  \left\vert
u_{k}\right\vert ^{2}dx\leq c\varepsilon,
\]
which implies (\ref{vanish}) holds.

 By (\ref{vanish}), we see that
\[
\underset{k\rightarrow\infty}{\lim}\int_{\mathbb{R}^{4}}\left(  \left\vert
\Delta u_{k}\right\vert ^{2}+V\left(  x\right)  \left\vert u_{k}\right\vert
^{2}\right)  dx=\int_{\mathbb{R}^{4}}\left(  \left\vert \Delta u\right\vert
^{2}+\gamma\left\vert u\right\vert ^{2}\right)  dx.
\]

From the proof of Lemma \ref{impor}, we know that there exists some bounded sequence
$t_{k}\geq1$ such that $t_{k}u_{k}\in\mathcal{N}_{\infty}$, that is,%
\begin{equation}
\int_{\mathbb{R}^{4}}\left(  \left\vert \Delta u_{k}\right\vert ^{2}+\left(
\gamma-\lambda\right)  \left\vert u_{k}\right\vert ^{2}\right)  dx-\lambda
\int_{\mathbb{R}^{4}}\left(  \exp\left(  2t_{k}^{2}u_{k}^{2}\right)
-1\right)  u_{k}^{2}dx=0. \label{sub1}%
\end{equation}
On the other hand, since $ u_k \in N_{V}$, then
\begin{equation}
\int_{\mathbb{R}^{4}}\left(  \left\vert \Delta u_{k}\right\vert ^{2}+\left(
V\left(  x\right)  -\lambda\right)  \left\vert u_{k}\right\vert ^{2}\right)
dx-\lambda\int_{\mathbb{R}^{4}}\left(  \exp\left(  2u_{k}^{2}\right)
-1\right)  u_{k}^{2}dx=0.\label{sub2}%
\end{equation}

Combining (\ref{sub1}) and (\ref{sub2}),\ we get \
\begin{align*}
&  \lambda\int_{\mathbb{R}^{4}}\left(  \exp\left(  2t_{k}^{2}u_{k}^{2}\right)
-1\right)  u_{k}^{2}dx-\lambda\int_{\mathbb{R}^{4}}\left(  \exp\left(
2u_{k}^{2}\right)  -1\right)  u_{k}^{2}dx\\
&\ \  =\int_{\mathbb{R}^{4}}\left(  \left\vert \Delta u_{k}\right\vert
^{2}+\left(  \gamma-\lambda\right)  \left\vert u_{k}\right\vert ^{2}\right)
dx-\int_{\mathbb{R}^{4}}\left(  \left\vert \Delta u_{k}\right\vert
^{2}+\left(  V\left(  x\right)  -\lambda\right)  \left\vert u_{k}\right\vert
^{2}\right)  dx\\
&\ \  =\int_{\mathbb{R}^{4}}\left(  \gamma-V\left(  x\right)  \right)  \left\vert
u_{k}\right\vert ^{2}dx\rightarrow0.
\end{align*}
Hence
\begin{equation}
\int_{\mathbb{R}^{4}}\left(  \exp\left(  2t_{k}^{2}u_{k}^{2}\right)
-1\right)  u_{k}^{2}dx=\int_{\mathbb{R}^{4}}\left(  \exp\left(  2u_{k}%
^{2}\right)  -1\right)  u_{k}^{2}dx+o_{k}\left(  1\right)  . \label{converg}%
\end{equation}

Next, we claim that $t_{k}\rightarrow t_{0}=1$ as $k\rightarrow\infty$. We
prove this by contradiction. Assume that $t_{0}>1$. We carry out the proof in two cases.
\vskip0.1cm

\emph{Case 1:} There exists some $N\geq2$ such
that
\[
\underset{k\rightarrow\infty}{\lim}\int_{\mathbb{R}^{4}}u_{k}^{2N}dx>0.
\]
This will imply that
\begin{align*}
\int_{\mathbb{R}^{4}}\left(  \exp\left(  2t_{k}^{2}u_{k}^{2}\right)
-1\right)  u_{k}^{2}dx  &  \geq\int_{\mathbb{R}^{4}}\left(  \exp\left(
2t_{0}^{2}u_{k}^{2}\right)  -1\right)  u_{k}^{2}dx\\
&  >\int_{\mathbb{R}^{4}}\left(  \exp\left(  2u_{k}^{2}\right)  -1\right)
u_{k}^{2}dx>0,
\end{align*}
which is a contradiction with (\ref{converg}).
\vskip0.1cm

\emph{Case 2:} For any $N\geq2$, there holds
\[
\underset{k\rightarrow\infty}{\lim}\int_{\mathbb{R}^{4}}u_{k}^{2N}%
dx\rightarrow0.
\]
Then, we have
\begin{align*}
I_{V}\left(  u_{k}\right)   &  =\frac{\lambda}{4}\int_{\mathbb{R}^{4}}\left(
\exp\left(  2u_{k}^{2}\right)  2u_{k}^{2}-\left(  \exp\left(  2u_{k}%
^{2}\right)  -1\right)  \right)  dx\\
&  =\int_{\mathbb{R}^{4}}\left(  \frac{\left(  2u_{k}^{2}\right)  ^{2}}%
{2}+\left(  \frac{1}{2}-\frac{1}{3!}\right)  \left(  2u_{k}^{2}\right)
^{3}+\left(  \frac{1}{3!}-\frac{1}{4!}\right)  \left(  2u_{k}^{2}\right)
^{4}+\ldots+\right. \\
&  \left.  \left(  \frac{1}{\left(  n-1\right)  !}-\frac{1}{n!}\right)
\left(  2u_{k}^{2}\right)  ^{n}+\ldots\right)  dx\rightarrow\frac{\lambda}%
{2}\int_{\mathbb{R}^{4}}\exp\left(  2u_{k}^{2}\right)  u_{k}^{2}dx.
\end{align*}
Thus, it follows that $\frac{\lambda}{2}\int_{\mathbb{R}^{4}}\exp\left(  2u_{k}^{2}\right)
u_{k}^{2}dx\rightarrow m_{V}$ as $k\rightarrow+\infty$. Therefore, we conclude that
\begin{align*}
\int_{\mathbb{R}^{4}}\left(  \exp\left(  2t_{k}^{2}u_{k}^{2}\right)
-1\right)  u_{k}^{2}dx  &  \geq\int_{\mathbb{R}^{4}}\left(  \exp\left(
2t_{0}^{2}u_{k}^{2}\right)  -1\right)  u_{k}^{2}dx\\
&  \geq t_{0}^{2}\int_{\mathbb{R}^{4}}\left(  \exp\left(  2u_{k}^{2}\right)
-1\right)  u_{k}^{2}dx\\
&  \rightarrow t_{0}^{2}\frac{2m_{V}}{\lambda}>\frac{2m_{V}}{\lambda}\\
&  =\underset{k\rightarrow\infty}{\lim}\int_{\mathbb{R}^{4}}\left(
\exp\left(  2u_{k}^{2}\right)  -1\right)  u_{k}^{2}dx,
\end{align*}
which contradicts (\ref{converg}). Thus the claim is proved.\

Now, by\ (\ref{vanish}) we can obtain
\begin{align*}
m_{\infty}  &  \leq I_{\infty}\left(  t_{k}u_{k}\right) \\
&  =\frac{t_{k}^{2}}{2}\int_{\mathbb{R}^{4}}\left(  \left\vert \Delta
u_{k}\right\vert ^{2}+V\left(  x\right)  \left\vert u_{k}\right\vert
^{2}\right)  dx-\frac{\lambda}{4}\int_{\mathbb{R}^{4}}\left(  \exp\left(
2t_{k}^{2}u_{k}^{2}\right)  -1\right)  dx\ \\
&  =\frac{t_{k}^{2}}{2}\int_{\mathbb{R}^{4}}\left(  \left\vert \Delta
u_{k}\right\vert ^{2}+\gamma\left\vert u_{k}\right\vert ^{2}\right)
dx+\frac{t_{k}^{2}\varepsilon^{2}}{2}-\frac{\lambda}{4}\int_{\mathbb{R}^{4}%
}\left(  \exp\left(  2t_{k}^{2}u_{k}^{2}\right)  -1\right)  dx\\
&  \leq\frac{t_{k}^{2}}{2}\int_{\mathbb{R}^{4}}\left(  \left\vert \Delta
u_{k}\right\vert ^{2}+\gamma\left\vert u_{k}\right\vert ^{2}\right)
dx+\frac{t_{k}^{2}\varepsilon^{2}}{2}-\frac{\lambda}{4}\int_{\mathbb{R}^{4}%
}\left(  \exp\left(  2u_{k}^{2}\right)  -1\right)  dx\\
&  \rightarrow m_{V},
\end{align*}
which contradicts  (\ref{the rang}). This accomplishes the proof of Lemma \ref{vanished all}.
\end{proof}
\medskip

In the following, we consider the case $0<l<\beta$. If $0<l<\beta$, then
$u_{k}\rightarrow u\neq0$ weakly in $H^{2}\left(  \mathbb{R}^{4}\right)  $. One
can choose an increasing sequence $\left\{  R_{j}\right\}  _{j}\rightarrow +\infty$ such that
\begin{equation}
\int_{B_{R_{j}}}\exp\left(  2u^{2}\right)  u^{2}dx=l+o_{j}\left(  1\right)  ,
\label{vanishing at infinity}%
\end{equation}
and
\[
\int_{B_{R_{j}}^{c}}u^{p}dx=o_{j}\left(  1\right)  ,
\]
for any $1\leq p<\infty$. We define
\[
C_{j}=B_{R_{j}+1}\backslash B_{R_{j}}=\left\{  \left.  x\in\mathbb{R}%
^{4}\right\vert R_{j}\leq\left\vert x\right\vert <R_{j}+1\right\}  .
\]

\begin{lemma}
For the $C_{j}$ given above, we have
\begin{equation}
\int_{C_{j}}\exp\left(  2u_{k}^{2}\right)  u_{k}^{2}dx=o_{j}\left(  1\right)
\label{vani}%
\end{equation}
and%
\begin{equation}
\int_{C_{j}}\left\vert \Delta u_{k}\right\vert ^{2}dx=o_{j}\left(  1\right)  .
\label{vani2}%
\end{equation}

\end{lemma}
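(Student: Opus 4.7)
The plan is to exploit the careful choice of $R_j$ in (\ref{vanishing at infinity}), which concentrates $u$ (and its exponential energy) inside $B_{R_j}$, together with the strong local convergence $u_k\to u$. The two assertions (\ref{vani}) and (\ref{vani2}) are handled by complementary arguments: the first through an equi-integrability/Vitali argument applied to the exponential nonlinearity; the second through weak-$\ast$ compactness of the tight family of measures $\{|\Delta u_k|^2\,dx\}$.

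For (\ref{vani}), I fix $j$ and use Rellich--Kondrachov compactness on the bounded annulus $C_j$ to obtain $u_k\to u$ strongly in $L^p(C_j)$ for every $p<\infty$, hence pointwise a.e.\ on $C_j$. To transport this through the exponential nonlinearity, I establish equi-integrability of $\{\exp(2u_k^2)u_k^2\}_k$ on $C_j$: a H\"older split combined with Adams' inequality---keeping the Laplacian norm of $u_k$ strictly below the Adams threshold thanks to the a priori control $m_V<m_\infty$ from Lemma \ref{impor} together with Lemma \ref{lem5}---yields $\int_{C_j\cap A}\exp(2u_k^2)u_k^2\,dx\to 0$ uniformly in $k$ as $|A|\to 0$. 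Vitali's convergence theorem then gives
\[
\lim_{k\to\infty}\int_{C_j}\exp(2u_k^2)u_k^2\,dx=\int_{C_j}\exp(2u^2)u^2\,dx,
\]
and the right-hand side is bounded above by $l-\int_{B_{R_j}}\exp(2u^2)u^2\,dx=o_j(1)$ by (\ref{vanishing at infinity}).

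For (\ref{vani2}), the family $\{|\Delta u_k|^2\,dx\}$ has bounded total mass (by $H^2$-boundedness of $\{u_k\}$), so up to a subsequence it converges weakly-$\ast$ to a finite positive Borel measure $\mu$ on $\mathbb{R}^4$. Since $\mu$ is finite, at most countably many spheres $\{|x|=r\}$ can carry $\mu$-mass; I may therefore perturb each $R_j$ by an arbitrarily small amount---which preserves (\ref{vanishing at infinity}) by continuity in $R_j$---to arrange $\mu(\partial B_{R_j})=\mu(\partial B_{R_j+1})=0$. A Portmanteau-type argument then produces $\int_{C_j}|\Delta u_k|^2\,dx\to\mu(C_j)$ as $k\to\infty$, and because $C_j\subset\mathbb{R}^4\setminus B_{R_j}$ with $R_j\to\infty$, the finiteness of $\mu$ forces $\mu(C_j)\leq\mu(\mathbb{R}^4\setminus B_{R_j})\to 0$ as $j\to\infty$.

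The main obstacle is the equi-integrability step in (\ref{vani}): converting strong $L^p_{\mathrm{loc}}$-convergence of $u_k$ into strong $L^1$-convergence of the super-exponential $\exp(2u_k^2)u_k^2$ requires applying Adams' inequality at a slightly super-critical exponent, which in turn demands a quantitative bound of the form $\|\Delta u_k\|_2^2<16\pi^2$. Establishing this bound by tracing constants through $m_V<m_\infty$ and its identification with the Pohozaev infimum $A_\gamma<8\pi^2$ from Section 5 is the delicate part; once it is in hand, the remainder is standard concentration-compactness bookkeeping.
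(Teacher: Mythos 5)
Your argument follows a genuinely different route from the paper's, and unfortunately the route you choose has a real gap, while the paper's own argument is considerably simpler and sidesteps the issue entirely.

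\textbf{The paper's proof.} The annuli $C_j$ are (after passing to a sparse subsequence of $\{R_j\}$) pairwise disjoint, so for each fixed $k$ one has
\[
\sum_{j}\int_{C_j}\exp\bigl(2u_k^2\bigr)u_k^2\,dx\leq\int_{\mathbb{R}^4}\exp\bigl(2u_k^2\bigr)u_k^2\,dx
=\frac{1}{\lambda}\int_{\mathbb{R}^4}\bigl(|\Delta u_k|^2+V(x)|u_k|^2\bigr)dx,
\]
and the right-hand side is uniformly bounded by the $H^2$-boundedness of the minimizing sequence and $u_k\in\mathcal N_V$. Divergence of the $j$-series would contradict this, so $\int_{C_j}\exp(2u_k^2)u_k^2\,dx\to0$ as $j\to\infty$ (with a uniform tail bound that then feeds the diagonal extraction used immediately after the lemma). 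The same disjoint-annuli bookkeeping proves (\ref{vani2}) with $|\Delta u_k|^2$. No convergence theorem, no Adams inequality, no exponent tracking is needed.

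\textbf{The gap in your proof of (\ref{vani}).} Your Vitali argument requires a uniform $L^p$-bound on $\exp(2u_k^2)u_k^2$ for some $p>1$, and you correctly reduce this to a strict Adams threshold condition, which you state as $\|\Delta u_k\|_2^2<16\pi^2$ (so that $\exp(2p u_k^2)$ is controlled by (\ref{Adams entire space}) or (\ref{int1}) for some $p>1$). But this bound is \emph{not} available. What the paper establishes is $m_V<m_\infty<8\pi^2$, and the Nehari identity together with the elementary comparison (\ref{control1}) (with constant $c=2$) yields, along the minimizing sequence,
\[
\int_{\mathbb{R}^4}\bigl(|\Delta u_k|^2+(V-\lambda)|u_k|^2\bigr)dx\leq 4\,m_V+o(1)<32\pi^2,
\]
which only gives $\|\Delta u_k\|_2^2<32\pi^2$. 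That is precisely the critical Adams threshold for $\exp(2u^2)$ and leaves \emph{no room} to raise to a power $p>1$. There is no estimate in the paper that pushes the Dirichlet energy below $16\pi^2$ along the Nehari minimizing sequence in Section~6 (the bound $2A_\lambda<16\pi^2$ in Section~5 refers to the Pohozaev minimizer for the constant-potential problem, not to the Nehari sequence for $I_V$). You flag this step as ``the delicate part,'' and indeed it is where the proposal breaks.

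\textbf{On (\ref{vani2}).} Your weak-$\ast$ measure argument is correct as far as it goes (including the perturbation of $R_j$ to avoid the at most countably many spheres carrying $\mu$-mass), but it is unnecessary: the same disjoint-annuli/total-mass estimate that handles (\ref{vani}) handles $|\Delta u_k|^2$ directly, as the paper notes in one line.

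In short: the lemma is a tail estimate for a summable family over disjoint annuli, not a convergence statement, and trying to prove it via strong convergence plus an Adams-threshold argument both overcomplicates matters and runs into an obstruction ($\|\Delta u_k\|_2^2<32\pi^2$, not $<16\pi^2$) that the paper's estimates do not resolve.
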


\begin{proof}
We prove (\ref{vani}) by contradiction. If there exists some subsequence
$\left\{  j_{i}\right\}  _{i}$ of $\left\{  j\right\}  $ such that
(\ref{vani})fails, then we must have
\[
\underset{i=1}{\overset{\infty}{\sum}}\int_{C_{j_{i}}}\exp\left(  2u_{k}%
^{2}\right)  u_{k}^{2}dx=\infty.
\]
However, on the other hand,
\begin{align*}
\underset{i=1}{\overset{\infty}{\sum}}\int_{C_{j_{i}}}\exp\left(  2u_{k}%
^{2}\right)  u_{k}^{2}dx  &  \leq\int_{\mathbb{R}^{4}}\exp\left(  2u_{k}%
^{2}\right)  u_{k}^{2}dx\\
&  =\frac{1}{\lambda}\int_{\mathbb{R}^{4}}\left(  \left\vert \Delta
u_{k}\right\vert ^{2}+V\left(  x\right)  \left\vert u_{k}\right\vert
^{2}\right)  dx<\infty,
\end{align*}
which arrives at a contradiction.  Similarly, one can also prove \ (\ref{vani2}).
\end{proof}

\begin{lemma}
\label{strong}There holds%
\[
\underset{k\rightarrow\infty}{\lim}\int_{\mathbb{R}^{4}}\left(  \exp\left(
2u_{k}^{2}\right)  -1\right)  dx=\int_{\mathbb{R}^{4}}\left(  \exp\left(
2u^{2}\right)  -1\right)  dx.
\]

\end{lemma}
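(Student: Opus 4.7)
The strategy is to decompose $\mathbb{R}^{4}=B_{R_j}\cup C_j\cup B_{R_j+1}^{c}$ (with $R_j$ and $C_j$ chosen as in the previous construction so that $\int_{B_{R_j}}\exp(2u^2)u^2\,dx = l + o_j(1)$ and $\int_{B_{R_j}^c}|u|^p\,dx = o_j(1)$ for every $1\leq p<\infty$), and estimate each of the three pieces separately, sending $k\to\infty$ first and then $j\to\infty$. The bulk integral on $B_{R_j}$ provides the matching term and is handled by local compactness; the annulus $C_j$ is negligible via (\ref{vani}); and the tail on $B_{R_j+1}^{c}$ requires a cutoff argument combined with the boundedness result of Theorem \ref{thm1}.

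On the bulk $B_{R_j}$, the compact embedding $H^{2}(B_{R_j})\hookrightarrow L^{q}(B_{R_j})$ for every $q<\infty$ and the boundedness of $\{u_k\}$ in $H^{2}(\mathbb{R}^{4})$ yield strong convergence $u_k\to u$ in $L^{q}(B_{R_j})$ and a.e.\ convergence along a subsequence. An application of the Adams inequality (\ref{Adams entire space}), combined with the a priori bound $\|\Delta u_k\|_{2}^{2}+\|u_k\|_{2}^{2}<16\pi^{2}$ that follows from $I_V(u_k)\to m_V$, Lemma \ref{impor} ($m_V<m_\infty$) and (\ref{control1}), produces a uniform $L^{1+\eta}(B_{R_j})$ bound on $\exp(2u_k^{2})-1$ for some $\eta>0$. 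Vitali's theorem then gives
\[
\int_{B_{R_j}}(\exp(2u_k^{2})-1)\,dx\longrightarrow\int_{B_{R_j}}(\exp(2u^{2})-1)\,dx \qquad (k\to\infty),
\]
and the choice of $R_j$ ensures the right-hand side converges to $\int_{\mathbb{R}^{4}}(\exp(2u^{2})-1)\,dx$ as $j\to\infty$.

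On the annular buffer $C_j$, the elementary inequality $e^{t}-1\leq te^{t}$ with $t=2u_k^{2}$ together with (\ref{vani}) gives, uniformly in $k$,
\[
\int_{C_j}(\exp(2u_k^{2})-1)\,dx\leq 2\int_{C_j}u_k^{2}\exp(2u_k^{2})\,dx=o_j(1),
\]
and the analogous estimate for $u$ is immediate since $u\in L^{p}(\mathbb{R}^{4})$ for every $p\geq 2$.

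The tail integral on $B_{R_j+1}^{c}$ is the main obstacle. Introduce a smooth cutoff $\varphi_j$ with $\varphi_j\equiv 0$ on $B_{R_j}$, $\varphi_j\equiv 1$ on $B_{R_j+1}^{c}$, and $|\nabla\varphi_j|,|\Delta\varphi_j|\leq C$; put $v_k=\varphi_j u_k$, so that $v_k=u_k$ on $B_{R_j+1}^{c}$. Using (\ref{vani2}), the strong local convergence $u_k\to u$ in $L^{2}(C_j)$ (giving $\|u_k\|_{L^{2}(C_j)}\to\|u\|_{L^{2}(C_j)}=o_j(1)$), and an interpolation estimate for $\|\nabla u_k\|_{L^{2}(C_j)}$, one checks $\|\Delta v_k\|_{2}^{2}=\|\Delta u_k\|_{L^{2}(B_{R_j}^{c})}^{2}+o_{j,k}(1)<16\pi^{2}$. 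Theorem \ref{thm1} with $K=\tfrac{1}{2}$ applied to $g(t)=\exp(2t^{2})-1$ (which satisfies condition (1) of Theorem \ref{thm1}) then yields
\[
\int_{B_{R_j+1}^{c}}(\exp(2u_k^{2})-1)\,dx\leq\int_{\mathbb{R}^{4}}(\exp(2v_k^{2})-1)\,dx\leq C\,\|u_k\|_{L^{2}(B_{R_j}^{c})}^{2}.
\]
The hard step is to show $\|u_k\|_{L^{2}(B_{R_j}^{c})}^{2}\to 0$ as $j\to\infty$ uniformly in $k$, i.e.\ to rule out escape of $L^{2}$-mass. This is where the strict gap $m_V<m_\infty$ from Lemma \ref{impor} enters decisively: via a concentration--compactness splitting $u_k = u + w_k(\cdot - y_k) + \text{rem}$, any bubble $w_k$ drifting to infinity would, after translation, weakly converge to a nontrivial element of $\mathcal{N}_\infty$ and contribute at least $m_\infty$ to $I_V(u_k)$ in the limit, contradicting $I_V(u_k)\to m_V<m_\infty$. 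Hence the $L^{2}$-mass is tight; combining the three estimates and letting $k\to\infty$ then $j\to\infty$ completes the proof.
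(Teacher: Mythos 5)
Your proof takes a genuinely different route from the paper's, and the tail estimate contains a gap that is fatal as written.

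The paper decomposes by \emph{level sets} of $u_k$, writing $\mathbb{R}^4 = \{u_k < R\}\cup\{u_k \geq R\}$. On the high set it uses the elementary Chebyshev bound
\[
\int_{\{u_k\geq R\}}\left(\exp\left(2u_k^2\right)-1\right)dx \leq \frac{1}{R^2}\int_{\mathbb{R}^4}u_k^2\exp\left(2u_k^2\right)dx,
\]
which tends to zero as $R\to\infty$ \emph{uniformly in $k$}, because $\int u_k^2\exp(2u_k^2)\,dx = \lambda^{-1}\int(|\Delta u_k|^2+V|u_k|^2)\,dx$ is bounded by the Nehari constraint and the $H^2$-bound on $\{u_k\}$; a similar smallness of $\int_{\{u_k\geq R\}}(\exp(2u^2)-1)\,dx$ follows from $|\{u_k\geq R\}|\leq R^{-2}\|u_k\|_2^2$ and absolute continuity of the integral of the fixed $L^1$ function $\exp(2u^2)-1$. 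On the low set the integrand is pointwise controlled and one passes to the limit. No tightness of $L^2$-mass enters anywhere.

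Your decomposition is by \emph{spatial regions} $B_{R_j}\cup C_j\cup B_{R_j+1}^c$, and the tail piece is where the argument breaks down. You reduce the tail to showing $\|u_k\|_{L^2(B_{R_j}^c)}^2\to 0$ as $j\to\infty$ uniformly in $k$, i.e.\ $L^2$-tightness of the minimizing sequence. But Lemma \ref{strong} is invoked in the case $0<l<\beta$, precisely the situation in which loss of mass at infinity has \emph{not} been ruled out; ruling it out is the content of the remaining lemmas of the section. Worse, if the tightness you assert were available, you would immediately get $u_k\to u$ strongly in $L^2(\mathbb{R}^4)$, hence in $L^q$ for every finite $q$ by interpolation with the $\|\Delta u_k\|_2$-bound, and then your own bulk argument applied globally would give $\int\exp(2u_k^2)u_k^2\,dx\to\int\exp(2u^2)u^2\,dx$, i.e.\ $\beta=l$—contradicting the standing hypothesis $l<\beta$. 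So the tightness you need is inconsistent with the very case under consideration. The concentration--compactness sketch you append (``any bubble drifting to infinity would contribute at least $m_\infty$'') is precisely what Lemmas \ref{spit lemma}--\ref{adxin1} establish, using Lemma \ref{strong} as an input; invoking that machinery here is circular.

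A secondary issue: the claim $\|\Delta u_k\|_2^2+\|u_k\|_2^2<16\pi^2$ does not follow from $I_V(u_k)\to m_V<8\pi^2$ and (\ref{control1}) as stated. The constant in (\ref{control1}) is $c=1/2$, which together with the Nehari relation yields only $\int(|\Delta u_k|^2+(V(x)-\lambda)|u_k|^2)\,dx\leq 4 m_V+o(1)<32\pi^2$; dropping the (possibly small) potential term gives $\|\Delta u_k\|_2^2<32\pi^2$, not $16\pi^2$, and the $\|u_k\|_2^2$ contribution is not controlled at all since $V_0-\lambda$ may be small. Thus the uniform $L^{1+\eta}$ bound you want from the Adams inequality on $B_{R_j}$ is not immediate. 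This part can be repaired with more care (for instance by combining the local compact embedding with a level-set cut inside $B_{R_j}$, which is essentially the paper's argument), but the tail cannot be handled by the mechanism you propose without first establishing what the rest of the section is trying to prove.
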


\begin{proof}
For any $R>0$, we can write
\begin{align*}
&  \left\vert \int_{\mathbb{R}^{4}}\left(  \exp\left(  2u_{k}^{2}\right)
-1\right)  dx-\int_{\mathbb{R}^{4}}\left(  \exp\left(  2u^{2}\right)
-1\right)  dx\right\vert \\
& \ \ =\left\vert \int_{\mathbb{R}^{4}\cap\left\{  u_{k}<R\right\}  }\left(
\exp\left(  2u_{k}^{2}\right)  -1\right)  dx-\int_{\mathbb{R}^{4}\cap\left\{
u_{k}<R\right\}  }\left(  \exp\left(  2u^{2}\right)  -1\right)  dx\right\vert
\\
& \ \ \ \  +\left\vert \int_{\mathbb{R}^{4}\cap\left\{  u_{k}\geq R\right\}  }\left(
\exp\left(  2u_{k}^{2}\right)  -1\right)  dx-\int_{\mathbb{R}^{4}\cap\left\{
u_{k}\geq R\right\}  }\left(  \exp\left(  2u^{2}\right)  -1\right)
dx\right\vert \\
&  =I_{k,R}+II_{k,R}.
\end{align*}
A direct application of the dominated convergence theorem leads to $I_{k,R}\rightarrow0$. For
$II_{k,R}$, we have
\begin{align*}
\int_{\mathbb{R}^{4}\cap\left\{  u_{k}\geq R\right\}  }\left(  \exp\left(
2u_{k}^{2}\right)  -1\right)  dx&\leq\frac{1}{R^{2}}\int_{\mathbb{R}^{4}%
\cap\left\{  u_{k}\geq R\right\}  }u_{k}^{2}\left(  \exp\left(  2u_{k}%
^{2}\right)  -1\right)  dx\\
&  \leq\frac{1}{R^{2}}\int_{\mathbb{R}^{4}}u_{k}^{2}\exp\left(  2u_{k}%
^{2}\right)  dx\rightarrow0,\text{ as }R\rightarrow\infty,
\end{align*}
where we have used the fact that $\int_{\mathbb{R}^{4}}u_{k}^{2}\exp\left(
2u_{k}^{2}\right)  dx$ is bounded. Consequently, $II_{k,R}\rightarrow0$, and the
proof is finished.
\end{proof}

\begin{lemma}
\label{strong copy 2}If $\int_{\mathbb{R}^{4}}\left(  \left\vert \Delta
u\right\vert ^{2}+V\left(  x\right)  \left\vert u\right\vert ^{2}\right)
dx>\lambda\int_{\mathbb{R}^{4}}\exp\left(  2u^{2}\right)  u^{2}dx$, then
\[
\underset{k\rightarrow\infty}{\lim}\int_{B_{R_{j}}}\exp\left(  2u_{k}%
^{2}\right)  u_{k}^{2}dx=\int_{B_{R_{j}}}\exp\left(  2u^{2}\right)  u^{2}dx
\]
provided $j$ large enough.
\end{lemma}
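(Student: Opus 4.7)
The plan is to prove the convergence by Vitali's convergence theorem on the bounded set $B_{R_j}$. Since $u_k\to u$ a.e.\ (after extracting a subsequence from the bounded sequence in $H^2$), it suffices to establish that the family $\{\exp(2u_k^2)u_k^2\}_k$ is equi-absolutely continuous on $B_{R_j}$, which I will achieve by controlling the Hilbert norm $\|\phi_k\|_V^2$ of the tail $\phi_k=u_k-u$ strictly below the Adams threshold $16\pi^2$.

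First I would exploit the Hilbert-space Brezis–Lieb identity in $(H^2(\mathbb{R}^4),\langle\cdot,\cdot\rangle_V)$, which gives $\|u_k\|_V^2=\|u\|_V^2+\|\phi_k\|_V^2+o(1)$ because $\phi_k\rightharpoonup 0$ kills the cross term. Combining with the Nehari constraint $\|u_k\|_V^2=\lambda\int\exp(2u_k^2)u_k^2\,dx\to\lambda\beta$ and the strict hypothesis $\|u\|_V^2>\lambda l$ yields $\lim_k\|\phi_k\|_V^2<\lambda(\beta-l)$. To bound $\lambda(\beta-l)$ from above, I use the minimizing property $I_V(u_k)\to m_V$, the Nehari identity, and Lemma \ref{strong} to compute
\[
\beta=\frac{2m_V}{\lambda}+\frac{Q}{2},\qquad Q=\int_{\mathbb{R}^4}(\exp(2u^2)-1)\,dx.
\]
The elementary pointwise inequality $e^t t\geq e^t-1$ (applied with $t=2u^2$) gives $2l\geq Q$, hence $\lambda(\beta-l)\leq 2m_V$. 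The chain $m_V<m_\infty\leq A_\lambda<8\pi^2$, where the last step is Lemma \ref{lem5} and the identification $m_\infty\leq A_\lambda$ comes from the ground-state characterization in Lemma \ref{lem2} (minimizers of the Pohozaev infimum yield elements of $\mathcal{N}_\infty$ after rescaling), then produces the desired strict bound $\lim_k\|\phi_k\|_V^2<2m_V<16\pi^2$.

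Once this is in place, the Adams inequality (\ref{Adams entire space}) applied to $\phi_k$ with the equivalent norm $\|\cdot\|_V$ (which preserves the critical exponent $32\pi^2$ because $V$ is bounded between two positive constants) produces some $\alpha>2$ and a uniform bound $\int_{\mathbb{R}^4}(\exp(\alpha\phi_k^2)-1)\,dx\leq C$. Using Young's inequality $u_k^2\leq(1+\varepsilon)\phi_k^2+C_\varepsilon u^2$ with $2(1+\varepsilon)<\alpha$, together with the fact that $\exp(Cu^2)\in L^q(B_{R_j})$ for every $q<\infty$ (which follows by covering $B_{R_j}$ by finitely many small balls on which $\|\Delta u\|_{L^2}^2$ is tiny and applying Adams locally), a Hölder estimate delivers a uniform bound of $\exp(2u_k^2)u_k^2$ in $L^{1+\delta}(B_{R_j})$ for some $\delta>0$. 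This equi-integrability together with the a.e.\ convergence allows Vitali's theorem to close the argument. The main obstacle is the numerical threshold $m_V<8\pi^2$: it is exactly here that the strict inequality $m_V<m_\infty$ of Lemma \ref{impor} and the sharp bound $A_\lambda<8\pi^2$ of Lemma \ref{lem5} are indispensable, since $\lambda(\beta-l)$ is only known to be bounded by $2m_V$ and any slack beyond $16\pi^2$ would destroy the Adams application.
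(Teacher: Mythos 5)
Your argument is correct but proceeds by a genuinely different route from the paper's. The paper localizes the energy on $B_{R_j}$, splits into the cases where $\lim_k\int_{B_{R_j}}(|\Delta u_k|^2+V|u_k|^2)dx$ equals or strictly exceeds $\int_{B_{R_j}}(|\Delta u|^2+V|u|^2)dx$, and in the second case introduces a normalized pair $v_k,v_0$ and invokes the Concentration--Compactness principle for Adams' inequality to win an exponent $p_0>1$ of local higher integrability; the hypothesis is used there to show $I_V(u)>0$ so that the relevant numerical bound stays below $16\pi^2$. You instead avoid the case split and concentration--compactness entirely: the Hilbert-space identity $\|u_k\|_V^2=\|u\|_V^2+\|\phi_k\|_V^2+o(1)$ with $\phi_k=u_k-u$, the Nehari constraint, Lemma \ref{strong}, and the elementary pointwise bound $te^t\geq e^t-1$ (with $t=2u^2$, giving $Q\leq 2l$) combine to yield $\lim_k\|\phi_k\|_V^2<\lambda(\beta-l)\leq 2m_V$, and the chain $m_V<m_\infty\leq A_\lambda<8\pi^2$ (the middle inequality because a rescaled Pohozaev minimizer is a solution of the limit equation with energy $A_\lambda$) puts this below $16\pi^2$; the rest is Adams on $\phi_k$, Young's inequality, local exponential integrability of $u$, and Vitali. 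This Brezis--Lieb splitting is arguably cleaner, applies uniformly in $j$ rather than only for $j$ large, and more transparently isolates where the threshold $16\pi^2$ enters. One place that deserves a word more: your parenthetical that the $V$-norm ``preserves the critical exponent $32\pi^2$'' is true but not entirely automatic when $V_0<1$ — the clean justification is either a dilation argument (replace $\phi_k(x)$ by $\phi_k(\mu x)$ to shrink the $L^2$ part until $\|\Delta\cdot\|_2^2+\|\cdot\|_2^2\leq 1$, which only costs a constant in the supremum), or to bypass the full-Sobolev-norm version (\ref{Adams entire space}) and use the exact-growth inequality (\ref{int1}), which needs only $\|\Delta\phi_k\|_2^2<16\pi^2$ and this follows directly from $\|\phi_k\|_V^2<16\pi^2$ since $V>0$. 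With that one clarification the proposal is complete.
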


\begin{proof}
Obviously,
\[
\underset{k}{\lim\inf}\int_{B_{R_{j}}}\left(  \left\vert \Delta u_{k}%
\right\vert ^{2}+V\left(  x\right)  \left\vert u_{k}\right\vert ^{2}\right)
dx\geq\int_{B_{R_{j}}}\left(  \left\vert \Delta u\right\vert ^{2}+V\left(
x\right)  \left\vert u\right\vert ^{2}\right)  dx
\] through lower semi-continuity.
We split the proof into two cases. \
\vskip0.1cm

\emph{Case 1:} If
\[
\underset{k}{\lim}\int_{B_{R_{j}}}\left(  \left\vert \Delta u_{k}\right\vert
^{2}+V\left(  x\right)  \left\vert u_{k}\right\vert ^{2}\right)
dx=\int_{B_{R_{j}}}\left(  \left\vert \Delta u\right\vert ^{2}+V\left(
x\right)  \left\vert u\right\vert ^{2}\right)  dx,
\]
then by the Adams' inequality (\ref{Adams entire space}), we know that for any $p>1$,%
\[%
\begin{array}
[c]{l}%
\int_{B_{R_{j}}}\left(  \exp\left(  2u_{k}^{2}\right)  u_{k}^{2}\right)
^{p}dx\leq c\int_{B_{R_{j}}}\left(  \exp\left(  2pu_{k}^{2}\right)  -1\right)
u_{k}^{2p}dx+\int_{B_{R_{j}}}u_{k}^{2p}dx\\
\leq\left(  \int_{B_{R_{j}}}\left(  \exp\left(  4pu_{k}^{2}\right)  -1\right)
dx\right)  ^{1/2}\left(  \int_{B_{R_{j}}}u_{k}^{4p}dx\right)  ^{1/2}%
+\int_{B_{R_{j}}}u_{k}^{2p}dx<+\infty.
\end{array}
\]
Therefore,
\[
\underset{k\rightarrow\infty}{\lim}\int_{B_{R_{j}}}\left(  \exp\left(
2u_{k}^{2}\right)  u_{k}^{2}\right)  dx=\int_{B_{R_{j}}}\left(  \exp\left(
2u^{2}\right)  u^{2}\right)  dx.
\]
\vskip0.1cm

\emph{Case 2:} If $\underset{k}{\lim}\int_{B_{R_{j}}}\left(  \left\vert \Delta
u_{k}\right\vert ^{2}+V\left(  x\right)  \left\vert u_{k}\right\vert
^{2}\right)  dx>\int_{B_{R_{j}}}\left(  \left\vert \Delta u\right\vert
^{2}+V\left(  x\right)  \left\vert u\right\vert ^{2}\right)  dx$, we set
\begin{align*}
v_{k}  &  =\frac{u_{k}}{\underset{k}{\lim}\int_{B_{R_{j}}}\left(  \left\vert
\Delta u_{k}\right\vert ^{2}+V\left(  x\right)  \left\vert u_{k}\right\vert
^{2}\right)  dx},\text{ and }\\
\text{ }v_{0}  &  =\frac{u}{\underset{k}{\lim}\int_{B_{R_{j}}}\left(
\left\vert \Delta u_{k}\right\vert ^{2}+V\left(  x\right)  \left\vert
u_{k}\right\vert ^{2}\right)  dx}.
\end{align*}

We claim that there exists some $q>1$ sufficiently closed to $1$ such that
\begin{equation}
q\underset{k}{\lim}\int_{B_{R_{j}}}\left(  \left\vert \Delta u_{k}\right\vert
^{2}+V\left(  x\right)  \left\vert u_{k}\right\vert ^{2}\right)
dx<\frac{16\pi^{2}}{1-\int_{B_{R_{j}}}\left(  \left\vert \Delta v_{0}%
\right\vert ^{2}+V\left(  x\right)  \left\vert v_{0}\right\vert ^{2}\right)
dx}.\label{eqa}%
\end{equation}
Indeed, we have
\begin{align*}
&  \int_{B_{R_{j}}}\left(  \left\vert \Delta u_{k}\right\vert ^{2}+V\left(
x\right)  \left\vert u_{k}\right\vert ^{2}\right)  dx\cdot\left(
1-\int_{B_{R_{j}}}\left(  \left\vert \Delta v_{0}\right\vert ^{2}+V\left(
x\right)  \left\vert v_{0}\right\vert ^{2}\right)  dx\right)  \\
& \ \  \leq\int_{\mathbb{R}^{4}}\left(  \left\vert \Delta u_{k}\right\vert
^{2}+V\left(  x\right)  \left\vert u_{k}\right\vert ^{2}\right)
dx-\int_{B_{R_{j}}}\left(  \left\vert \Delta u\right\vert ^{2}+V\left(
x\right)  \left\vert u\right\vert ^{2}\right)  dx\\
&\ \   =\frac{\lambda}{2}\int_{\mathbb{R}^{4}}\left(  \exp\left(  2u_{k}%
^{2}\right)  -1\right)  dx-\frac{\lambda}{2}\int_{\mathbb{R}^{4}}\left(
\exp\left(  2u^{2}\right)  -1\right)  dx+\\
& \ \ \ \  +2I_{V}\left(  u_{k}\right)  -2I_{V}\left(  u\right)  +o_{j}\left(
1\right)  .
\end{align*}

Since $\int_{\mathbb{R}^{4}}\left(  \left\vert \Delta u\right\vert
^{2}+V\left(  x\right)  \left\vert u\right\vert ^{2}\right)  dx>\lambda
\int_{\mathbb{R}^{4}}\exp\left(  2u^{2}\right)  u^{2}dx$, then it follows that
\begin{equation}
I_{V}\left(  u\right)  >\frac{\lambda}{4}\int_{\mathbb{R}^{4}}\left(
\exp\left(  2u^{2}\right)  2u^{2}-\left(  \exp\left(  2u^{2}\right)
-1\right)  \right)  dx>0.\label{adda}%
\end{equation}
Combining (\ref{adda}) and the fact (Lemma \ref{lem5}), we conclude that
\[
\underset{k\rightarrow\infty}{\lim}I_{V}\left(  u_{k}\right)  =m_{V}%
<m_{\infty}<8\pi^{2}.
\]
This proves the claim.
\vskip0.1cm

By the Concentration-Compactness principle for Adams' inequality on
$H^{2}\left(  \mathbb{R}^{4}\right)  $, there exists some $p_{0}>1$ such that
\[
\underset{k}{\sup}\int_{B_{R_{j}}}\left(  \exp\left(  2u_{k}^{2}\right)
-1\right)  ^{p_{0}}dx<\infty\text{. }%
\]
Then it follows that there exists some $\tilde{p}_{0}>1$ such that
\[
\underset{k}{\sup}\int_{B_{R_{j}}}\left(  \exp\left(  2u_{k}^{2}\right)
u_{k}^{2}\right)  ^{\tilde{p}_{0}}dx<\infty.
\]
Therefore, we get
\[
\underset{k\rightarrow\infty}{\lim}\int_{B_{R_{j}}}\left(  \exp\left(
2u_{k}^{2}\right)  u_{k}^{2}\right)  dx=\int_{B_{R_{j}}}\exp\left(
2u^{2}\right)  u^{2}dx.
\]

\end{proof}

From above, we can extract a subsequence $u_{k_{j}}$ such that for every $j\in%
\mathbb{N}
$,%
\[
\int_{C_{j}}\exp\left(  2u_{k_{j}}^{2}\right)  u_{k_{j}}^{2}dx=o_{j}\left(
1\right)  ,\int_{C_{j}}\left\vert \Delta u_{k_{j}}\right\vert ^{2}%
dx=o_{j}\left(  1\right)  ,\int_{C_{j}}u_{k_{j}}^{2}dx=o_{j}\left(  1\right).
\]
We take $\left\{  u_{k_{j}}\right\}  $ as a new minimizing sequence renaming
it $\left\{  u_{j}\right\}  _{j}$.

Now, for every $j$, we define a function $\psi_{j}\in C_{c}^{\infty}\left(
\mathbb{R}^{4}\right)$ satisfying $0\leq\psi_{j}\left(  x\right)  \leq1$, $\psi_{j}\left(  x\right)  =1$ if $\left\vert x\right\vert \leq
R_{j}$, $\psi_{j}\left(  x\right)  =0$ if $\left\vert x\right\vert >R_{j}+1$,
and $\left\vert \nabla\psi_{j}\left(  x\right)  \right\vert ,\left\vert
\Delta\psi_{j}\left(  x\right)  \right\vert \leq c$ for every $x$. We also
define auxiliary functions
\[
u_{j}^{\prime}=\psi_{j}u_{j}\text{, and }u_{j}^{\prime\prime}=\left(
1-\psi_{j}\right)  u_{j}.
\]
Obviously, we have $u_{j}=u_{j}^{\prime}+u_{j}^{\prime\prime}$ for every $j$.

\begin{lemma}
\label{spit lemma}The following properties hold as $j\rightarrow\infty:$

1. $u_{j}^{\prime}\rightarrow u$ weakly in $H^{2}\left(  \mathbb{R}%
^{4}\right)  $, strongly in $L^{p}\left(  \mathbb{R}^{4}\right)  $ for any
$1\leq p<\infty$, and $u_{j}^{\prime\prime}\rightarrow0$ weakly in
$H^{2}\left(  \mathbb{R}^{4}\right)  $.
\vskip0.1cm

2. There results
\begin{align}
&  \int_{\mathbb{R}^{4}}\exp\left(  2u_{j}^{2}\right)  u_{j}^{2}dx\nonumber\\
&  \ \ =\int_{\mathbb{R}^{4}}\exp\left(  2\left(  u_{j}^{\prime}\right)
^{2}\right)  \left(  u_{j}^{\prime}\right)  ^{2}dx+\int_{\mathbb{R}^{4}}%
\exp\left(  2\left(  u_{j}^{\prime\prime}\right)  ^{2}\right)  \left(
u_{j}^{\prime\prime}\right)  ^{2}dx+o_{j}\left(  1\right)  . \label{split1}%
\end{align}
\vskip0.1cm

3. There results
\begin{align}
&  \int_{\mathbb{R}^{4}}\left(  \left\vert \Delta u_{j}\right\vert
^{2}+V\left(  x\right)  \left\vert u_{j}\right\vert ^{2}\right)  dx\nonumber\\
& \ \ =\int_{\mathbb{R}^{4}}\left(  \left\vert \Delta u_{j}^{\prime}\right\vert
^{2}+V\left(  x\right)  \left\vert u_{j}^{\prime}\right\vert ^{2}\right)
dx+\int_{\mathbb{R}^{4}}\left(  \left\vert \Delta u_{j}^{\prime\prime
}\right\vert ^{2}+V\left(  x\right)  \left\vert u_{j}^{\prime\prime
}\right\vert ^{2}\right)  dx+o_{j}\left(  1\right)  . \label{split2}%
\end{align}

\end{lemma}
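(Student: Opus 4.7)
The plan is to exploit the fact that $\psi_j$ equals $1$ on $B_{R_j}$ and $0$ off $B_{R_j+1}$, so that outside the annulus $C_j$ the decomposition is trivial ($u_j' = u_j$, $u_j'' = 0$ on $B_{R_j}$; and vice versa on $B_{R_j+1}^c$), and all error contributions live on $C_j$, where the preceding lemma has already established $\int_{C_j} \exp(2u_j^2) u_j^2\,dx = o_j(1)$, $\int_{C_j} |\Delta u_j|^2\,dx = o_j(1)$, and $\int_{C_j} u_j^2\,dx = o_j(1)$.

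For part 1, I would expand $\Delta u_j' = \psi_j \Delta u_j + 2\nabla\psi_j\cdot\nabla u_j + u_j\Delta\psi_j$, with the last two terms supported in $C_j$. Using $|\nabla\psi_j|,|\Delta\psi_j|\leq c$ and the $H^2$-boundedness of $\{u_j\}$, it follows that $\{u_j'\}$ is bounded in $H^2(\mathbb{R}^4)$. Weak convergence $u_j' \rightharpoonup u$ is obtained by testing against any $\varphi \in C_c^\infty(\mathbb{R}^4)$: for $j$ large, $\operatorname{supp}\varphi \subset B_{R_j}$, hence $u_j' = u_j$ on $\operatorname{supp}\varphi$, and the weak convergence reduces to that of $\{u_j\}$. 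Strong $L^p$ convergence follows by splitting $\|u_j' - u\|_{L^p} \leq \|u_j - u\|_{L^p(B_{R_j})} + \|u_j'\|_{L^p(C_j)} + \|u\|_{L^p(B_{R_j}^c)}$, where the first term vanishes by local $L^p$-convergence, the second by the smallness control on $C_j$ (taking the diagonal subsequence $u_{k_j}$ chosen above), and the third by the prescribed decay $\int_{B_{R_j}^c}|u|^p\,dx = o_j(1)$. The fact that $u_j'' \rightharpoonup 0$ follows identically, since $u_j'' \equiv 0$ on $B_{R_j}$.

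For part 2, I would decompose $\int_{\mathbb{R}^4} = \int_{B_{R_j}} + \int_{C_j} + \int_{B_{R_j+1}^c}$. On $B_{R_j}$ the integrand $\exp(2u_j^2)u_j^2$ equals $\exp(2(u_j')^2)(u_j')^2$, and on $B_{R_j+1}^c$ it equals $\exp(2(u_j'')^2)(u_j'')^2$. On $C_j$ the monotonicity $t\mapsto t\exp(2t)$ combined with $|u_j'|,|u_j''|\leq |u_j|$ bounds each of the split integrands by $\exp(2u_j^2)u_j^2$, and the $C_j$-decay estimate from the previous lemma makes all $C_j$-pieces $o_j(1)$, yielding \eqref{split1}.

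For part 3, the pointwise identity $\Delta u_j = \Delta u_j' + \Delta u_j''$ gives $|\Delta u_j|^2 = |\Delta u_j'|^2 + |\Delta u_j''|^2 + 2\Delta u_j'\cdot \Delta u_j''$, and the cross-product is supported in $C_j$. Writing the cross-terms through the product-rule expansions and applying Cauchy--Schwarz reduces them to $\int_{C_j} (|\Delta u_j|^2 + |\nabla u_j|^2 + u_j^2)\,dx$; the analogous splitting for the potential term involves only $\int_{C_j} V(x)u_j^2\,dx$, which is $o_j(1)$. The main obstacle is the $\int_{C_j}|\nabla u_j|^2\,dx$ contribution, which is not among the direct $o_j(1)$ estimates already proved. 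I would handle it by the $H^2$-interpolation inequality $\|\nabla u\|_{L^2(C_j)}^2 \lesssim \|u\|_{L^2(\widetilde C_j)}\bigl(\|\Delta u\|_{L^2(\widetilde C_j)}+\|u\|_{L^2(\widetilde C_j)}\bigr)$ on a slightly enlarged annulus $\widetilde C_j$, combined with the $H^2$-bound on $\{u_j\}$ and the smallness of $\int_{\widetilde C_j} u_j^2$ (refining the diagonal subsequence, if needed, so that $\widetilde C_j$-smallness holds). This yields \eqref{split2} and completes the proof.
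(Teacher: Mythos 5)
Your proposal is correct and follows essentially the same route as the paper: part 1 is dismissed as elementary, part 2 is the same three-region decomposition with the $C_j$ contribution absorbed into $o_j(1)$, and part 3 expands the cross term $\int \Delta u_j'\cdot\Delta u_j''$ via the product rule and controls $\int_{C_j}|\nabla u_j|^2$ by $H^2$-interpolation against $\int_{C_j}(|u_j|^2+|\Delta u_j|^2)=o_j(1)$. Your use of a slightly enlarged annulus $\widetilde C_j$ for the interpolation is a mild technical refinement of the paper's direct application on $C_j$, but the underlying mechanism is identical.
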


\begin{proof}
The first property is obvious, by the defnitions of $u_{j}^{\prime}$ and
$u_{j}^{\prime\prime}$.\ \ Now, we check the second equality. By (\ref{vani}), we derive that
\begin{align*}
&  \int_{\mathbb{R}^{4}}\exp\left(  2u_{j}^{2}\right)  u_{j}^{2}dx\\
&  =\int_{B_{R_{j}}}\exp\left(  2u_{j}^{2}\right)  u_{j}^{2}dx+\int_{C_{j}%
}\exp\left(  2u_{j}^{2}\right)  u_{j}^{2}dx+\int_{B_{R_{j}}^{c}+1}\exp\left(
2u_{j}^{2}\right)  u_{j}^{2}dx\\
&  =\int_{B_{R_{j}}}\exp\left(  2\left(  u_{j}^{\prime}\right)  ^{2}\right)
\left(  u_{j}^{\prime}\right)  ^{2}dx+\int_{C_{j}}\exp\left(  2u_{j}%
^{2}\right)  u_{j}^{2}dx+\int_{B_{R_{j}}^{c}+1}\exp\left(  2\left(
u_{j}^{\prime\prime}\right)  ^{2}\right)  \left(  u_{j}^{\prime\prime}\right)
^{2}dx\\
&  =\int_{\mathbb{R}^{4}}\exp\left(  2\left(  u_{j}^{\prime}\right)
^{2}\right)  \left(  u_{j}^{\prime}\right)  ^{2}dx+\int_{\mathbb{R}^{4}}%
\exp\left(  2\left(  u_{j}^{\prime\prime}\right)  ^{2}\right)  \left(
u_{j}^{\prime\prime}\right)  ^{2}dx+o_{j}\left(  1\right).
\end{align*}

We now prove the third property. Since $q\left(  x\right)  >0$, direct computation leads to
\begin{align*}
\int_{\mathbb{R}^{4}}V\left(  x\right)  \left\vert u_{j}\right\vert ^{2}dx  &
=\int_{B_{R_{j}}}V\left(  x\right)  \left\vert u_{j}^{\prime}\right\vert
^{2}dx+\int_{B_{R_{j}}^{c}+1}V\left(  x\right)  \left\vert u_{j}^{\prime
\prime}\right\vert ^{2}dx+\int_{C_{j}}V\left(  x\right)  \left\vert
u_{j}\right\vert ^{2}dx\\
&  =\int_{B_{R_{j}}}V\left(  x\right)  \left\vert u_{j}^{\prime}\right\vert
^{2}dx+\int_{B_{R_{j}}^{c}+1}V\left(  x\right)  \left\vert u_{j}^{\prime
\prime}\right\vert ^{2}dx+o_{j}\left(  1\right) \\
&  =\int_{\mathbb{R}^{4}}V\left(  x\right)  \left\vert u_{j}^{\prime
}\right\vert ^{2}dx+\int_{\mathbb{R}^{4}}V\left(  x\right)  \left\vert
u_{j}^{\prime\prime}\right\vert ^{2}dx+o_{j}\left(  1\right).
\end{align*}%
We now only need to show that
$$\int_{\mathbb{R}^{4}}|\Delta u_{j}|^2dx=\int_{\mathbb{R}^{4}}|\Delta u_{j}^{\prime}|^2dx+\int_{\mathbb{R}^{4}}|\Delta u_{j}^{\prime\prime}|^2dx+o_j(1).$$
Observing
\begin{align*}
\int_{\mathbb{R}^{4}}\left\vert \Delta u_{j}\right\vert ^{2}dx  &
=\int_{\mathbb{R}^{4}}\left\vert \Delta u_{j}^{\prime}+\Delta u_{j}%
^{\prime\prime}\right\vert ^{2}dx\\
&  =\int_{\mathbb{R}^{4}}\left\vert \Delta u_{j}^{\prime}\right\vert
^{2}dx+\int_{\mathbb{R}^{4}}\left\vert \Delta u_{j}^{\prime\prime}\right\vert
^{2}dx+\int_{\mathbb{R}^{4}}\Delta u_{j}^{\prime}\cdot\Delta u_{j}%
^{\prime\prime}dx,
\end{align*}
in order to obtain the desired result, we only need to verify that
$$\int_{\mathbb{R}^{4}}\Delta u_{j}^{\prime}\cdot\Delta u_{j}%
^{\prime\prime}dx=o_j(1).$$
We can write
\begin{align*}
&  \int_{\mathbb{R}^{4}}\Delta u_{k}^{\prime}\cdot\Delta u_{k}^{\prime\prime
}dx\\
&\ \   =\int_{\mathbb{R}^{4}}\left(  u_{j}\Delta\psi_{j}+u_{j}\Delta\psi
_{j}+2\nabla u_{j}\nabla\psi_{j}\right)  \cdot\left(  u_{j}\Delta\left(
1-\psi_{j}\right)  \right. \\
& \ \ \ \  \left.  +\left(  1-\psi_{j}\right)  \Delta u_{j}+2\nabla\left(  1-\psi
_{j}\right)  \nabla u_{j}\right)  dx\\
& \ \  =\int_{\mathbb{R}^{4}}\left(  u_{j}\Delta\psi_{j}+u_{j}\Delta\psi
_{j}+2\nabla u_{j}\nabla\psi_{j}\right)  \cdot\left(  -u_{j}\Delta\psi
_{j}+\left(  1-\psi_{j}\right)  \Delta u_{j}-2\nabla\psi_{j}\nabla
u_{j}\right)  dx\\
& \ \  =\int_{\mathbb{R}^{4}}\left(  -\left\vert u_{j}\right\vert ^{2}\left\vert
\Delta\psi_{j}\right\vert ^{2}+\left(  1-\psi_{j}\right)  u_{j}\Delta\psi
_{j}\Delta u_{j}-2u_{j}\Delta\psi_{j}\nabla\psi_{j}\nabla u_{j}\right)  dx\\
& \ \ \ \  +\int_{\mathbb{R}^{4}}\left(  -\left\vert u_{j}\right\vert ^{2}\left\vert
\Delta\psi_{j}\right\vert ^{2}+\left(  1-\psi_{j}\right)  u_{j}\Delta
u_{j}\Delta\psi_{j}-2u_{j}\nabla u_{j}\nabla\psi_{j}\Delta\psi_{j}\right)
dx\\
& \ \ \ \  +\int_{\mathbb{R}^{4}}\left(  -2\nabla u_{j}\nabla\psi_{j}u_{j}\Delta
\psi_{j}+2\nabla u_{j}\nabla\psi_{j}\left(  1-\psi_{j}\right)  \Delta
u_{j}-4\left\vert \nabla u_{j}\right\vert ^{2}\left\vert \nabla\psi
_{j}\right\vert ^{2}\right)  dx\\
& \ \  =I+II+III.
\end{align*}
For $I$, we have
\begin{align*}
I&=\int_{\mathbb{R}^{4}}\left\vert \left(  -\left\vert u_{j}\right\vert
^{2}\left\vert \Delta\psi_{j}\right\vert ^{2}+\left(  1-\psi_{j}\right)
u_{j}\Delta\psi_{j}\Delta u_{j}-2u_{j}\Delta\psi_{j}\nabla\psi_{j}\nabla
u_{j}\right)  \right\vert dx\\
&  \leq c\int_{C_{j}}\left\vert u_{j}\right\vert ^{2}dx+\int_{C_{j}}\left\vert
u_{j}\right\vert \left\vert \Delta u_{j}\right\vert dx+c\int_{C_{j}}\left\vert
u_{j}\nabla u_{j}\right\vert dx\leq\\
&  \leq c\int_{C_{j}}\left\vert u_{j}\right\vert ^{2}dx+\left(  \int_{C_{j}%
}\left\vert u_{j}\right\vert ^{2}dx\right)  ^{1/2}\left(  \int_{C_{j}%
}\left\vert \Delta u_{j}\right\vert ^{2}dx\right)  ^{1/2}\\
& \ \  +c\left(  \int_{C_{j}}\left\vert u_{j}\right\vert ^{2}dx\right)
^{1/2}\left(  \int_{C_{j}}\left\vert \nabla u_{j}\right\vert ^{2}dx\right)
^{1/2}=o_{j}\left(  1\right).
\end{align*}%
For $II$, we derive that
\begin{align}
II&=\int_{\mathbb{R}^{4}}\left\vert \left(  -\left\vert u_{j}\right\vert
^{2}\left\vert \Delta\psi_{j}\right\vert ^{2}+\left(  1-\psi_{j}\right)
u_{j}\Delta u_{j}\Delta\psi_{j}-2u_{j}\nabla u_{j}\nabla\psi_{j}\Delta\psi
_{j}\right)  \right\vert dx\nonumber\\
&  \leq\int_{\mathbb{R}^{4}}\left\vert u_{j}\right\vert ^{2}\left\vert
\Delta\psi_{j}\right\vert ^{2}dx+\int_{\mathbb{R}^{4}}\left\vert u_{j}\Delta
u_{j}\Delta\psi_{j}\right\vert dx+\int_{\mathbb{R}^{4}}\left\vert 2u_{j}\nabla
u_{j}\nabla\psi_{j}\Delta\psi_{j}\right\vert dx\nonumber\\
&  \leq c\int_{C_{j}}\left\vert u_{j}\right\vert ^{2}dx+c\left(  \int_{C_{j}%
}\left\vert u_{j}\right\vert ^{2}dx\right)  ^{1/2}\left(  \int_{C_{j}%
}\left\vert \Delta u_{j}\right\vert ^{2}dx\right)  ^{1/2}\nonumber\\
& \ \  +c\left(  \int_{C_{j}}\left\vert u_{j}\right\vert ^{2}dx\right)
^{1/2}\left(  \int_{C_{j}}\left\vert \nabla u_{j}\right\vert ^{2}dx\right)
^{1/2}=o_{j}\left(  1\right). \label{1}%
\end{align}
For $III$, obviously we have

\begin{align}
III&=\int_{\mathbb{R}^{4}}\left(  -2\nabla u_{j}\nabla\psi_{j}u_{j}%
\Delta\psi_{j}+2\nabla u_{j}\nabla\psi_{j}\left(  1-\psi_{j}\right)  \Delta
u_{j}-4\left\vert \nabla u_{j}\right\vert ^{2}\left\vert \nabla\psi
_{j}\right\vert ^{2}\right)  dx\nonumber\\
&  =2\int_{\mathbb{R}^{4}}\left\vert \nabla u_{j}\nabla\psi_{j}u_{j}\Delta
\psi_{j}\right\vert dx+2\int_{\mathbb{R}^{4}}\left\vert \nabla u_{j}\nabla
\psi_{j}\left(  1-\psi_{j}\right)  \Delta u_{j}\right\vert dx+4\int
_{\mathbb{R}^{4}}\left\vert \nabla u_{j}\right\vert ^{2}\left\vert \nabla
\psi_{j}\right\vert ^{2}dx\nonumber\\
&  \leq c\left(  \int_{C_{j}}\left\vert u_{j}\right\vert ^{2}dx\right)
^{1/2}\left(  \int_{C_{j}}\left\vert \nabla u_{j}\right\vert ^{2}dx\right)
^{1/2}+c\left(  \int_{C_{j}}\left\vert \nabla u_{j}\right\vert ^{2}dx\right)
^{1/2}\cdot\nonumber\\
&  \cdot\left(  \int_{C_{j}}\left\vert \Delta u_{j}\right\vert ^{2}dx\right)
^{1/2}+c\int_{C_{j}}\left\vert \nabla u_{j}\right\vert ^{2}dx. \label{3}%
\end{align}
By using the Sobolev interpolation inequality, we get
\begin{equation}
\int_{C_{j}}\left\vert \nabla u_{j}\right\vert ^{2}dx\leq c\left(  \int
_{C_{j}}\left\vert u_{j}\right\vert ^{2}dx+\int_{C_{j}}\left\vert \Delta
u_{j}\right\vert ^{2}dx\right)  =o_{j}\left(  1\right). \label{4}%
\end{equation}
Combining (\ref{1})-(\ref{4}), we finish the proof.
\end{proof}

\begin{lemma}
There holds%
\[
\frac{\lambda}{4}\int_{\mathbb{R}^{4}}\left(  \exp\left(  2u^{2}\right)
2u^{2}-\left(  \exp\left(  2u^{2}\right)  -1\right)  \right)  dx\leq m_{V}.
\]

\end{lemma}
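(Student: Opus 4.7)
The plan is to derive the claimed inequality directly from Fatou's lemma, exploiting the pointwise non-negativity of the integrand together with the almost-everywhere convergence of the chosen minimizing sequence. From the series expansion already displayed in (\ref{control1}), one has
$$\exp(2t^2)\,2t^2 - \bigl(\exp(2t^2)-1\bigr) \;=\; \sum_{n\geq 2}\frac{n-1}{n!}\,(2t^2)^n \;\geq\; 0$$
for every $t\in\mathbb{R}$, so in particular each $\exp(2u_j^2)2u_j^2 - (\exp(2u_j^2)-1)$ is a non-negative measurable function on $\mathbb{R}^4$. Moreover, as recorded immediately after the definition of $\mathcal{N}_V$, for every $v\in\mathcal{N}_V$ the functional simplifies to
$$I_V(v) \;=\; \frac{\lambda}{4}\int_{\mathbb{R}^4}\bigl(\exp(2v^2)\,2v^2 - (\exp(2v^2)-1)\bigr)\,dx,$$
so, applying this identity to our minimizing sequence, one gets
$$\lim_{j\to\infty}\frac{\lambda}{4}\int_{\mathbb{R}^4}\bigl(\exp(2u_j^2)\,2u_j^2 - (\exp(2u_j^2)-1)\bigr)\,dx \;=\; m_V.$$

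To close the argument I would invoke that $u_j\to u$ almost everywhere in $\mathbb{R}^4$, a property the diagonal subsequence $\{u_{k_j}\}$ renamed $\{u_j\}$ inherits from the a.e.\ convergence of the original minimizing sequence $\{u_k\}$ to $u$ established earlier. Fatou's lemma, applied to the non-negative integrands above, then yields
$$\int_{\mathbb{R}^4}\bigl(\exp(2u^2)\,2u^2 - (\exp(2u^2)-1)\bigr)\,dx \;\leq\; \liminf_{j\to\infty}\int_{\mathbb{R}^4}\bigl(\exp(2u_j^2)\,2u_j^2 - (\exp(2u_j^2)-1)\bigr)\,dx \;=\; \frac{4m_V}{\lambda},$$
and multiplying through by $\lambda/4$ produces precisely the claimed inequality.

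There is no substantive obstacle here: the pointwise non-negativity of the integrand, together with the a.e.\ convergence $u_j\to u$, makes Fatou applicable without any further machinery, and in particular the delicate splitting $u_j = u_j'+u_j''$ constructed in the preceding splitting lemma is not needed for this step. That splitting is being set up for the subsequent matching lower bound, where one must locate the concentration mass and rule out a nontrivial $u_j''$ component by comparing against the limiting level $m_\infty$; the present inequality is just the easy half-step, cleanly isolated as a consequence of semicontinuity.
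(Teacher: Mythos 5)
Your argument is correct and is essentially the same as the paper's, which invokes the weak lower semicontinuity of the Nehari-level functional along the minimizing sequence; you make the mechanism explicit by noting that the integrand $\exp(2t^2)2t^2-(\exp(2t^2)-1)$ is pointwise non-negative (by the displayed series expansion) and that the renamed diagonal subsequence still converges a.e.\ to $u$, so Fatou's lemma applies directly. The paper's terse phrasing (``$u_j\to u$ weakly in $H^2$ and in $L^p$'') by itself would not justify the inequality for a nonlinear integrand, so your explicit appeal to a.e.\ convergence plus Fatou is the right, and in fact the intended, justification.
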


\begin{proof}
Since $u_{j}\rightarrow u$ weakly in $H^{2}\left(  \mathbb{R}^{4}\right)  $
and in $L^{p}\left(  \mathbb{R}^{4}\right)  $ for any $p>1$, thus%
\begin{align*}
&  \frac{\lambda}{4}\int_{\mathbb{R}^{4}}\left(  \exp\left(  2u^{2}\right)
2u^{2}-\left(  \exp\left(  2u^{2}\right)  -1\right)  \right)  dx\\
& \ \  \leq\frac{\lambda}{4}\underset{j\rightarrow+\infty}{\lim}\int
_{\mathbb{R}^{4}}\left(  \exp\left(  2u_{j}^{2}\right)  2u_{j}^{2}-\left(
\exp\left(  2u_{j}^{2}\right)  -1\right)  \right)  dx\\
& \ \  =m_{V}.
\end{align*}

\end{proof}

\begin{lemma}
It cannot be
\begin{equation}
\int_{\mathbb{R}^{4}}\left(  \left\vert \Delta u\right\vert ^{2}+V\left(
x\right)  \left\vert u\right\vert ^{2}\right)  dx<\lambda\int_{\mathbb{R}^{4}%
}\exp\left(  2u^{2}\right)  u^{2}dx. \label{small}%
\end{equation}

\end{lemma}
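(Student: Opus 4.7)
The plan is to argue by contradiction, using the fact that the weak limit $u$ is a feasible test function which, after rescaling down onto $\mathcal{N}_V$, would beat the minimum $m_V$. First I note that since we are in the remaining case $0<l<\beta$, we have $l=\int_{\mathbb{R}^4}\exp(2u^2)u^2dx>0$, so $u\not\equiv 0$; in particular $u\neq 0$ on a set of positive measure.

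Assume (\ref{small}) holds, i.e.\ $N_V(u)<0$. The first step is to project $u$ onto the Nehari manifold $\mathcal{N}_V$ with a scaling factor strictly less than one. Consider
\[
\eta(t):=N_V(tu)=t^2\int_{\mathbb{R}^{4}}\bigl(|\Delta u|^{2}+V(x)|u|^{2}\bigr)dx-\lambda\int_{\mathbb{R}^{4}}\exp(2t^{2}u^{2})\,t^{2}u^{2}\,dx,\qquad t>0.
\]
Since $\exp(2t^{2}u^{2})t^{2}u^{2}=O(t^{4}u^{4})$ as $t\to 0^{+}$, we have $\eta(t)>0$ for $t>0$ small; on the other hand $\eta(1)=N_V(u)<0$ by (\ref{small}). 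By continuity there exists $t_u\in(0,1)$ with $\eta(t_u)=0$, so $t_u u\in\mathcal{N}_V$.

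The second step exploits the strict monotonicity of the ``Nehari density''
\[
\phi(s):=\exp(2s^{2})\,2s^{2}-\bigl(\exp(2s^{2})-1\bigr),\qquad \phi'(s)=8s^{3}\exp(2s^{2}),
\]
which is nonnegative and strictly increasing in $|s|$. Since every $v\in\mathcal{N}_V$ satisfies $I_V(v)=\tfrac{\lambda}{4}\int_{\mathbb{R}^{4}}\phi(v)\,dx$, and $t_u u\in\mathcal{N}_V$, we get
\[
m_V\leq I_V(t_u u)=\frac{\lambda}{4}\int_{\mathbb{R}^{4}}\phi(t_u u)\,dx<\frac{\lambda}{4}\int_{\mathbb{R}^{4}}\phi(u)\,dx,
\]
the strict inequality following from $t_u<1$, the strict monotonicity of $\phi$ in $|s|$, and the fact that $u\neq 0$ on a set of positive measure. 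Invoking the preceding lemma, which gives $\tfrac{\lambda}{4}\int_{\mathbb{R}^{4}}\phi(u)\,dx\leq m_V$, we reach the contradiction $m_V<m_V$. Hence (\ref{small}) cannot occur.

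The only delicate point is the verification of the strict inequality $\int\phi(t_u u)<\int\phi(u)$; this is not really an obstacle since $\phi$ is strictly increasing in $|s|$ with $\phi(0)=0$ and $u$ is nonzero on a set of positive measure, but one should remember to rule out $u\equiv 0$ at the outset, which is precisely what the standing assumption $l>0$ provides.
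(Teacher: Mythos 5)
Your proof is correct and follows the same route as the paper: project $u$ onto $\mathcal{N}_V$ with $t_u\in(0,1)$ (possible since (\ref{small}) says $N_V(u)<0$), use $m_V\leq I_V(t_u u)=\tfrac{\lambda}{4}\int\phi(t_u u)$, and conclude against the preceding lemma $\tfrac{\lambda}{4}\int\phi(u)\leq m_V$. In fact your placement of the strict inequality is the careful one: the paper's displayed chain writes ``$\leq\tfrac{\lambda}{4}\int\phi(u)\,dx<I_V(u)\leq m_V$,'' but under (\ref{small}) one actually has $I_V(u)<\tfrac{\lambda}{4}\int\phi(u)$, and $I_V(u)\leq m_V$ is unjustified since $u$ need not lie on $\mathcal{N}_V$; what is really needed is exactly what you wrote, namely the strict inequality $\int\phi(t_u u)<\int\phi(u)$ coming from $t_u<1$, the strict monotonicity of $\phi$, and $u\not\equiv 0$ (guaranteed by $l>0$), followed by the nonstrict bound from the preceding lemma.
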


\begin{proof}
If (\ref{small}) is true, then there exists some $t\in\left(  0,1\right)  $
such that $tu\in\mathcal{N}_{V}$. Therefore,
\begin{align*}
m_{V}  &  \leq I_{V}\left(  tu\right)  =\frac{\lambda}{4}\int_{\mathbb{R}^{4}%
}\left(  \exp\left(  2t^{2}u^{2}\right)  2t^{2}u^{2}-\left(  \exp\left(
2t^{2}u^{2}\right)  -1\right)  \right)  dx\\
&  \leq\frac{\lambda}{4}\int_{\mathbb{R}^{4}}\left(  \exp\left(
2u^{2}\right)  2u^{2}-\left(  \exp\left(  2u^{2}\right)  -1\right)  \right)
dx\\
&  <I_{V}\left(  u\right)  \leq m_{V},
\end{align*}
which is a contradiction.
\end{proof}

\begin{lemma}\label{adxin1}
It cannot be
\begin{equation}
\int_{\mathbb{R}^{4}}\left(  \left\vert \Delta u\right\vert ^{2}+V\left(
x\right)  \left\vert u\right\vert ^{2}\right)  dx>\lambda\int_{\mathbb{R}^{4}%
}\exp\left(  2u^{2}\right)  u^{2}dx. \label{large}%
\end{equation}

\end{lemma}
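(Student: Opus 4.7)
The plan is to argue by contradiction: assume (\ref{large}) holds. Under this strict inequality the hypothesis of Lemma \ref{strong copy 2} is satisfied, so for each fixed $j$ large enough we may extract (via the diagonal selection already built into the sequence $\{u_j\}$ in the paragraph preceding Lemma \ref{spit lemma}) the strong convergence
\[
\int_{B_{R_j}}\exp(2u_j^2)u_j^2\,dx \longrightarrow \int_{B_{R_j}}\exp(2u^2)u^2\,dx.
\]
Combined with (\ref{vani}) on the annulus $C_j$ and (\ref{vanishing at infinity}), this upgrades to
\[
\int_{\mathbb{R}^{4}}\exp(2(u_j')^2)(u_j')^2\,dx \longrightarrow \int_{\mathbb{R}^{4}}\exp(2u^2)u^2\,dx.
\]

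Next I will exploit the splittings (\ref{split1}) and (\ref{split2}) from Lemma \ref{spit lemma} together with an analogous splitting of $\int(\exp(2u_j^2)-1)dx$, which follows from the same cutoff decomposition plus the pointwise estimate $\exp(2t^2)-1\leq 2t^2\exp(2t^2)$ applied on $C_j$ (so that $\int_{C_j}(\exp(2u_j^2)-1)dx = o_j(1)$ by (\ref{vani})). From $u_j \in \mathcal{N}_V$ I obtain
\[
N_V(u_j') + N_V(u_j'') = o_j(1).
\]
By weak lower semicontinuity of the quadratic form applied to $u_j' \rightharpoonup u$, together with the strong convergence above, $\liminf_j N_V(u_j') \geq N_V(u) > 0$, so $N_V(u_j'') < 0$ for all $j$ sufficiently large. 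Consequently there exists $t_j \in (0,1)$ with $t_j u_j'' \in \mathcal{N}_V$, giving $I_V(t_j u_j'') \geq m_V$. Using the monotonicity of $s \mapsto \exp(2s)2s - (\exp(2s)-1)$ in $s = u^2$ (the same observation as (\ref{control1})) and $t_j < 1$ yields $I_V(u_j'') \geq I_V(t_j u_j'') \geq m_V$.

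Passing to the limit in the energy splitting $I_V(u_j) = I_V(u_j') + I_V(u_j'') + o_j(1)$, which follows directly from the three splittings established above, I obtain
\[
m_V = \lim_{j\to\infty}I_V(u_j) \geq I_V(u) + m_V,
\]
hence $I_V(u)\leq 0$. On the other hand, the algebraic identity
\[
I_V(u) = \tfrac{1}{2}N_V(u) + \tfrac{\lambda}{4}\int_{\mathbb{R}^{4}}\bigl(\exp(2u^2)\cdot 2u^2 - (\exp(2u^2)-1)\bigr)dx
\]
combined with (\ref{large}) (which forces $u \neq 0$ and $N_V(u)>0$) and the inequality $\exp(2s)\cdot 2s - (\exp(2s)-1)>0$ for $s>0$ forces $I_V(u)>0$, a contradiction. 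The main obstacle is the honest justification of the three simultaneous splittings on $C_j$; the quadratic and $V$-weighted parts are handled by (\ref{vani2}) and (\ref{4}), while the exponential pieces require chaining Lemma \ref{strong copy 2} with the diagonal selection and the pointwise bound $\exp(2t^2)-1 \leq 2t^2\exp(2t^2)$ to reduce all annular contributions to $o_j(1)$ consequences of (\ref{vani}).
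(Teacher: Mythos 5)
Your proposal contains a genuine gap at the step where you claim $I_V(u_j'') \geq I_V(t_j u_j'')$. Since $t_j\in(0,1)$ is the unique Nehari point for $u_j''$, the standard fibering picture says that $t\mapsto I_V(tu_j'')$ increases on $(0,t_j)$ and decreases on $(t_j,\infty)$, i.e.\ $I_V(t_j u_j'')$ is the \emph{maximum}; because $t_j<1$, one actually gets $I_V(u_j'')\leq I_V(t_j u_j'')$, the reverse of what you wrote. Your monotonicity observation really proves $J(u_j'')\geq J(t_j u_j'')$ for the auxiliary functional $J(v)=\tfrac{\lambda}{4}\int(\exp(2v^2)2v^2-(\exp(2v^2)-1))\,dx$, but $J(u_j'')\neq I_V(u_j'')$ once $u_j''\notin\mathcal N_V$: in fact $I_V(u_j'')=\tfrac12 N_V(u_j'')+J(u_j'')<J(u_j'')$ precisely because $N_V(u_j'')<0$. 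Consequently the deduction $m_V\geq I_V(u)+m_V$ is unjustified, since it needs $\liminf_j I_V(u_j'')\geq m_V$, which does not follow.

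The argument is salvageable, and your repaired version is actually a genuinely different route from the paper's. Replace $I_V$ by $J$ throughout the splitting: since $u_j\in\mathcal N_V$ one has $I_V(u_j)=J(u_j)$, and $J$ splits as $J(u_j)=J(u_j')+J(u_j'')+o_j(1)$ by the cutoff decomposition together with (\ref{vani}). Then $J(u_j'')\geq J(t_j u_j'')=I_V(t_j u_j'')\geq m_V$, while Fatou (the integrand of $J$ is nonnegative and $u_j'\to u$ a.e.) gives $\liminf_j J(u_j')\geq J(u)$, so $m_V\geq J(u)+m_V$, forcing $J(u)\leq0$ and hence $u=0$, contradicting $l>0$. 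This keeps everything inside $\mathcal N_V$ and $m_V$. The paper instead passes to the limiting problem: it uses $u_j''\rightharpoonup0$ and (\ref{vanish}) to replace $V$ by $\gamma$ for $u_j''$, projects $u_j''$ onto $\mathcal N_\infty$, and derives $m_\infty\leq m_V$, contradicting the strict inequality $m_V<m_\infty$ of Lemma \ref{impor}. Your (corrected) argument is slightly more elementary in that it does not need $m_\infty$ at all, but you still must be careful to work with $J$ rather than $I_V$ for the tail piece, exactly because $u_j''$ sits off the Nehari manifold with $N_V(u_j'')<0$.
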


\begin{proof}
By Lemma \ref{spit lemma}, we get
\begin{align*}
&  \int_{\mathbb{R}^{4}}\left(  \left\vert \Delta u_{j}^{\prime}\right\vert
^{2}+V\left(  x\right)  \left\vert u_{j}^{\prime}\right\vert ^{2}\right)
dx+\int_{\mathbb{R}^{4}}\left(  \left\vert \Delta u_{j}^{\prime\prime
}\right\vert ^{2}+V\left(  x\right)  \left\vert u_{j}^{\prime\prime
}\right\vert ^{2}\right)  dx\\
&\ \   =\int_{\mathbb{R}^{4}}\left(  \left\vert \Delta u_{j}\right\vert
^{2}+V\left(  x\right)  \left\vert u_{j}\right\vert ^{2}\right)
dx+o_{j}\left(  1\right)  =\lambda\int_{\mathbb{R}^{4}}\exp\left(  2u_{j}%
^{2}\right)  u_{j}^{2}dx+o_{j}\left(  1\right) \\
& \ \  =\lambda\int_{\mathbb{R}^{4}}\exp\left(  2\left(  u_{j}^{\prime}\right)
^{2}\right)  \left(  u_{j}^{\prime}\right)  ^{2}dx+\lambda\int_{\mathbb{R}%
^{4}}\exp\left(  2\left(  u_{j}^{\prime\prime}\right)  ^{2}\right)  \left(
u_{j}^{\prime\prime}\right)  ^{2}dx+o_{j}\left(  1\right)  .
\end{align*}

Assume for contradiction that (\ref{large}) holds, then we can pick some
$\delta>0$ such that
\begin{equation}\label{ad2}
\int_{\mathbb{R}^{4}}\left(  \left\vert \Delta u\right\vert ^{2}+V\left(
x\right)  \left\vert u\right\vert ^{2}\right)  dx>\lambda\int_{\mathbb{R}^{4}%
}\exp\left(  2u^{2}\right)  u^{2}dx+\delta.
\end{equation}
Since $u_{j}^{\prime}\rightarrow u$ weakly in $H^{2}\left(  \mathbb{R}%
^{4}\right)  $,$\ $by (\ref{large}) and Lemma \ref{strong copy 2}, we have
\begin{align*}
&  \underset{k\rightarrow+\infty}{\lim\inf}\int_{\mathbb{R}^{4}}\left(  \left\vert \Delta
u_{j}^{\prime}\right\vert ^{2}+V\left(  x\right)  \left\vert u_{j}^{\prime
}\right\vert ^{2}\right)  dx\\
& \ \  \geq\int_{\mathbb{R}^{4}}\left(  \left\vert \Delta u\right\vert
^{2}+V\left(  x\right)  \left\vert u\right\vert ^{2}\right)  dx\\
& \ \  >\lambda\int_{\mathbb{R}^{4}}\exp\left(  2u^{2}\right)  u^{2}dx+\delta\\
&\ \   =\lambda\int_{\mathbb{R}^{4}}\exp\left(  2\left(  u_{j}^{\prime}\right)
^{2}\right)  \left(  u_{j}^{\prime}\right)  ^{2}dx+\delta+o_{j}\left(
1\right)  .
\end{align*}
Hence, we have
\[
\int_{\mathbb{R}^{4}}\left(  \left\vert \Delta u_{j}^{\prime\prime}\right\vert
^{2}+V\left(  x\right)  \left\vert u_{j}^{\prime\prime}\right\vert
^{2}\right)  dx<\lambda\int_{\mathbb{R}^{4}}\exp\left(  2\left(  u_{j}%
^{\prime\prime}\right)  ^{2}\right)  \left(  u_{j}^{\prime\prime}\right)
^{2}dx-\delta+o_{j}\left(  1\right)
\]
for $j$ large enough. \ Since $u_{j}^{\prime\prime}\rightarrow0$, weakly in
$H^{2}\left(  \mathbb{R}^{4}\right)  $, and arguing as Lemma
\ref{vanished all}, we can obtain
\[
\underset{j\rightarrow+\infty}{\lim}\int_{\mathbb{R}^{4}}\left(  \left\vert \Delta u_{j}%
^{\prime\prime}\right\vert ^{2}+V\left(  x\right)  \left\vert u_{j}%
^{\prime\prime}\right\vert ^{2}\right)  dx=\int_{\mathbb{R}^{4}}\left(
\left\vert \Delta u_{j}^{\prime\prime}\right\vert ^{2}+\gamma\left\vert
u_{j}^{\prime\prime}\right\vert ^{2}\right)  dx.
\]
Therefore, it follows that for $j$ large enough, there holds
\[
\int_{\mathbb{R}^{4}}\left(  \left\vert \Delta u_{j}^{\prime\prime}\right\vert
^{2}+\gamma\left\vert u_{j}^{\prime\prime}\right\vert ^{2}\right)
dx<\lambda\int_{\mathbb{R}^{4}}\exp\left(  2\left(  u_{j}^{\prime\prime
}\right)  ^{2}\right)  \left(  u_{j}^{\prime\prime}\right)  ^{2}%
dx-\delta+o_{j}\left(  1\right).
\]
\ By the usual argument, we can find some $t_{j}\in\left(  0,1\right)  $ such that
$t_{j}u_{j}^{\prime\prime}\in\mathcal{N}_{\infty}$, so we conclude that
\begin{align*}
m_{\infty}  &  \leq I_{\infty}\left(  t_{j}u_{j}^{\prime\prime}\right)  \leq
I_{\infty}\left(  u_{j}^{\prime\prime}\right) \\
&  =\frac{\lambda}{4}\int_{\mathbb{R}^{4}}\left(  \exp\left(  2\left\vert
u_{j}^{\prime\prime}\right\vert ^{2}\right)  2\left\vert u_{j}^{\prime\prime
}\right\vert ^{2}-\left(  \exp\left(  2\left\vert u_{j}^{\prime\prime
}\right\vert ^{2}\right)  -1\right)  \right)  dx\\
&  \leq\frac{\lambda}{4}\int_{\mathbb{R}^{4}}\left(  \exp\left(  2\left\vert
u_{j}^{\prime\prime}\right\vert ^{2}\right)  2\left\vert u_{j}^{\prime\prime
}\right\vert ^{2}-\left(  \exp\left(  2\left\vert u_{j}^{\prime\prime
}\right\vert ^{2}\right)  -1\right)  \right)  dx\\
&\ \   +\frac{\lambda}{4}\int_{\mathbb{R}^{4}}\left(  \exp\left(  2\left\vert
u_{j}^{\prime}\right\vert ^{2}\right)  2\left\vert u_{j}^{\prime}\right\vert
^{2}-\left(  \exp\left(  2\left\vert u_{j}^{\prime}\right\vert ^{2}\right)
-1\right)  \right)  dx\\
&  \leq I_{V}\left(  u_{j}\right)  +o_{j}\left(  1\right).
\end{align*}
Let $j\rightarrow \infty$, we derive $m_{\infty}\leq m_{V}$, which is a contradiction. This accomplishes the proof of Lemma \ref{adxin1}.
\end{proof}

\bigskip

\bigskip

\bigskip

Lu Chen

School of Mathematics and Statistics

Beijing Institute of Technology

Beijing 100081, P. R. China

chenlu5818804@163.com

\bigskip

Guozhen Lu

Department of Mathematics

University of Connecticut

Storrs, CT 06269, USA

E-mail: guozhen.lu@uconn.edu

\bigskip

Maochun Zhu

Faculty of Science

Jiangsu University

Zhenjiang, 212013, P. R. China

zhumaochun2006@126.com

\bigskip

\bigskip

\end{document}